\documentclass[11pt,a4paper]{amsart}
\usepackage{amscd,amssymb,amsmath,latexsym,graphicx,epsfig,color,url}
\usepackage[active]{srcltx}
\usepackage[all]{xy}
\usepackage{hyperref}
\hypersetup{breaklinks=true}

\setlength{\parindent}{0.5cm}
\setlength{\topmargin}{0.1cm}
\setlength{\textwidth}{14.3cm}
\setlength{\oddsidemargin}{0.9cm}
\setlength{\evensidemargin}{0.9cm}
\setlength{\textheight}{22cm}

\def\ov#1{{\overline{#1}}}

\def\wt#1{{\widetilde{#1}}}

\newcommand{\MV}{\operatorname{MV}}
\newcommand{\conv}{\operatorname{conv}}
\newcommand{\newton}{\operatorname{N}}
\newcommand{\FS}{\operatorname{FS}}

\newcommand{\supp}{\operatorname{supp}}
\newcommand{\codim}{\operatorname{codim}}
\newcommand{\vol}{\operatorname{vol}}

\newcommand{\Res}{\operatorname{Res}}
\newcommand{\Deltaa}{\Delta_{\rm ang}}
\newcommand{\Deltar}{\Delta_{\rm rad}}
\newcommand{\mult}{\operatorname{mult}}

\newcommand{\SL}{\operatorname{SL}}
\newcommand{\GL}{\operatorname{GL}}
\newcommand{\NJ}{\operatorname{NJ}}

\renewcommand{\d}{{\rm d}}
\newcommand{\e}{\operatorname{e}}
\newcommand{\dist}{\operatorname{dist}}
\renewcommand{\i}{{\rm i}}

\newcommand{\Prob}{\operatorname{Prob}}
\newcommand{\Elim}{\operatorname{Elim}}
\newcommand{\h}{\operatorname{h}}
\newcommand{\haar}{\text{\rm Haar}}


\newcommand{\C}{\mathbb{C}}

\newcommand{\N}{\mathbb{N}}
\renewcommand{\P}{\mathbb{P}}

\newcommand{\R}{\mathbb{R}}

\newcommand{\Z}{\mathbb{Z}}
\newcommand{\E}{\mathbb{E}}

\newcommand{\cA}{{\mathcal A}}
\newcommand{\cB}{{\mathcal B}}

\newcommand{\bfa}{{\boldsymbol{a}}}
\newcommand{\bfb}{{\boldsymbol{b}}}

\newcommand{\bfe}{{\boldsymbol{e}}}
\newcommand{\bff}{{\boldsymbol{f}}}
\newcommand{\bfg}{{\boldsymbol{g}}}

\newcommand{\bfu}{{\boldsymbol{u}}}
\newcommand{\bfv}{{\boldsymbol{v}}}
\newcommand{\bfw}{{\boldsymbol{w}}}
\newcommand{\bfx}{{\boldsymbol{x}}}
\newcommand{\bfy}{{\boldsymbol{y}}}
\newcommand{\bfz}{{\boldsymbol{z}}}

\newcommand{\bfQ}{{\boldsymbol{Q}}}

\newcommand{\bfalpha}{{\boldsymbol{\alpha}}}
\newcommand{\bfbeta}{{\boldsymbol{\beta}}}

\newcommand{\bfnu}{{\boldsymbol{\nu}}}
\newcommand{\bfxi}{{\boldsymbol{\xi}}}
\newcommand{\bfchi}{{\boldsymbol{\chi}}}
\newcommand{\bfzero}{{\boldsymbol{0}}}

\newcommand{\bfcA}{{\boldsymbol{\mathcal A}}}

\numberwithin{equation}{section}

\theoremstyle{definition}
\newtheorem{definition}[equation]{Definition}

\newtheorem{remark}[equation]{Remark}
\newtheorem{example}[equation]{Example}

\theoremstyle{plain}
\newtheorem{lemma}[equation]{Lemma}
\newtheorem{proposition}[equation]{Proposition}
\newtheorem{theorem}[equation]{Theorem}
\newtheorem{corollary}[equation]{Corollary}

\begin{document}

\title[Equidistribution of the solutions of sparse polynomial systems]{Quantitative equidistribution
  for the solutions of systems of sparse polynomial equations}
\author[D'Andrea]{Carlos D'Andrea}
\address{D'Andrea: Universitat de Barcelona,
Departament d'{\`A}lgebra i Geometria.
Gran Via~585, 08007 Barcelona, Spain}
\email{cdandrea@ub.edu}
\urladdr{\url{http://atlas.mat.ub.es/personals/dandrea/}}

\author[Galligo]{Andr\'e Galligo}
\address{Galligo: Universit\'e de Nice-Sophia Antipolis,
Laboratoire de Math\'ematiques. Parc Valrose, 06108 Nice Cedex 02, France}
\email{galligo@unice.fr}
\urladdr{\url{http://math.unice.fr/~galligo/}}

\author[Sombra]{Mart{\'\i}n~Sombra}
\address{Sombra: ICREA and Universitat de Barcelona, Departament d'{\`A}lgebra i Geometria.
Gran Via~585, 08007 Barcelona, Spain}
\email{sombra@ub.edu}
\urladdr{\url{http://atlas.mat.ub.es/personals/sombra/}}

\date{\today} \subjclass[2010]{Primary 30C15; Secondary 11K38, 13P15,
  42B05.}  \keywords{Equidistribution, zeros of systems of
  polynomials, exponential sums, sparse resultants, random systems of
  polynomials}
\thanks{D'Andrea was partially supported by the MICINN research
  project MTM2010-20279 (Spain).  Galligo was partially supported by the
European Marie Curie network SAGA. Sombra was partially supported by
  the MICINN research project MTM2009-14163-C02-01 and the MINECO research project
MTM2012-38122-C03-02 (Spain). }

\begin{abstract}
  For a system of Laurent polynomials $f_{1},\dots, f_{n}\in
  \C[x_{1}^{\pm1},\dots, x_{n}^{\pm1}]$ whose coefficients are not too
  big with respect to its directional resultants, we show that the solutions
  in the algebraic torus $(\C^{\times})^{n}$ of the system of
  equations $f_{1}=\dots=f_{n}=0$, are approximately equidistributed
  near the unit polycircle. This generalizes to the
  multivariate case a classical result due to Erd\"os and
  Tur\'an on the distribution of the arguments of the roots of a
  univariate polynomial.

  We apply this result to bound the number of real roots
  of a system of Laurent polynomials, and to study the
  asymptotic distribution of the roots of systems of Laurent
  polynomials over $\Z$ and of random systems of Laurent
  polynomials  over~$\C$.
\end{abstract}
\maketitle



\vspace{-7mm}

\section{Introduction and statement of results}

A celebrated result due to Erd\"os and Tur\'an says that, for a
univariate polynomial over $\C$ whose middle coefficients are not too
big with respect to its extremal coefficients, the arguments of its roots are
approximately equidistributed~\cite{ET50}.  Combined with a recent
result of Hughes and Nikeghbali~\cite{HN08}, this shows that the roots
of such a polynomial cluster near the unit circle.

We introduce some notation to make this result precise. Let $Z$ be an effective 
cycle of~$\C^{\times}=\C\setminus \{0\}$ of dimension 0, that is, a formal finite sum
\begin{displaymath}
  Z=\sum_{\xi} m_{\xi}[\xi] 
\end{displaymath}
with $\xi\in \C^{\times}$ and $m_{\xi}\in \N$ with $m_\xi=0$ for all
but finitely many $\xi$, as in \cite[\S1.3]{Fulton:IT}.  The degree of
$Z$, denoted $\deg(Z)$, is defined as the sum of its multiplicities
$m_{\xi}$. We assume that $Z\ne 0$ or, equivalently, that
$\deg(Z)\ge1$.

For each $ -\pi \le \alpha<\beta\le \pi$, consider the cycle
\begin{displaymath}
  Z_{\alpha,\beta}= \sum_{\alpha < \arg(\xi) \le \beta} m_{\xi}[\xi] ,
\end{displaymath}
where $\arg(\xi)$ denotes the argument of $\xi$.  The {\em angle
  discrepancy} of $Z$ is defined as
\begin{equation*}
  \Deltaa(Z)= \sup_{-\pi \le \alpha<\beta\le \pi} \bigg|  \frac{\deg(Z_{\alpha,\beta})}{\deg(Z)}- \frac{\beta-\alpha}{2\pi}\bigg|.
\end{equation*}
For example, when $Z$ is the zero set of $x^d-1$ in
$\mathbb{C}^\times$, we have that $\Deltaa(Z)=\frac{1}{d}$.

For $0<\varepsilon <1$, consider also the cycle 
\begin{displaymath}
  Z_{\varepsilon}= \sum_{1-\varepsilon <
  |\xi| < ({1-\varepsilon})^{-1}} m_{\xi}[\xi].
\end{displaymath}
The  {\em radius discrepancy} of $Z$ with
 respect to $\varepsilon$ is defined as
$$
\Deltar(Z, \varepsilon)=1- \frac{\deg(Z_{\varepsilon})}{\deg(Z)}.
$$
For example, when $Z$ is the zero set of
$x^d-1$ in $\mathbb{C}^\times$, we have that $\Deltar(Z,\varepsilon)=0$ for all $\varepsilon$.

For a polynomial $f\in \C[x]\setminus\{0\}$, we denote by $Z(f)$ the
0-dimensional effective cycle of $\C^{\times}$ defined by its roots
and their corresponding multiplicities.  We also set
$\|f\|_{\sup}=\sup_{|z|=1}|f(z)|$.

\begin{theorem}\label{mtunivariate}
Let  $f= a_0+\cdots+a_dx^d  \in \C[x] $
with $d\ge1$ and $a_0a_d \ne 0,$ and $0<\varepsilon <1$. Then 
\begin{equation*}
\Deltaa(Z(f)) \le c\,  \sqrt{\frac{1}{d}\log\bigg(\frac{\|f\|_{\sup}}{\sqrt{|a_0a_d|}}\bigg)},\quad
\Deltar(Z(f),\varepsilon)  \le \frac{2}{\varepsilon d}  \, \log\bigg(\frac{\|f\|_{\sup}}{\sqrt{|a_0a_d|}}\bigg),
\end{equation*}
with   $c=\sqrt{2\pi/G}= 2.5619\dots$, where
$G=\sum_{m=0}^\infty
\frac{(-1)^{m}}{(2m+1)^2}=0.915965594\dots$ is  Catalan's constant.
\end{theorem}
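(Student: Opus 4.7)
The plan is to treat the two discrepancies independently, using the Mahler measure $M(f) = |a_d|\prod_{j=1}^{d}\max(1,|\xi_j|)$ as the main intermediate quantity, where $\xi_1,\dots,\xi_d\in\C^{\times}$ are the roots of $f$ listed with multiplicity. Jensen's formula gives
\begin{equation*}
\log M(f) = \int_{-\pi}^{\pi}\log|f(\e^{\i\theta})|\,\frac{\d\theta}{2\pi} \le \log\|f\|_{\sup},
\end{equation*}
so both bounds can first be proved in terms of $M(f)$ and then weakened to $\|f\|_{\sup}$ at the end.

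For the radius discrepancy I would first establish the identity
\begin{equation*}
\sum_{j=1}^{d}\bigl|\log|\xi_j|\bigr| = 2\log M(f) - \log|a_0a_d|,
\end{equation*}
obtained by combining $\sum_j \log^+|\xi_j| = \log M(f) - \log|a_d|$ with the Vieta identity $\sum_j\log|\xi_j| = \log|a_0/a_d|$. Each root $\xi_j$ falling outside the annulus $1-\varepsilon<|\xi|<(1-\varepsilon)^{-1}$ contributes at least $-\log(1-\varepsilon)\ge\varepsilon$ to this sum, so the number of such roots, counted with multiplicities, is at most $\varepsilon^{-1}\bigl(2\log M(f)-\log|a_0a_d|\bigr)$. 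Dividing by $d=\deg Z(f)$ and invoking Jensen yields the radius bound of the theorem.

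For the angle discrepancy, writing $\xi_j=r_j\e^{\i\theta_j}$, the strategy is to apply the sharp form of the Erd\"os-Tur\'an inequality (due to Ganelius) to the multiset of angles $\theta_j$. This inequality reduces the discrepancy of $\{\theta_j/(2\pi)\}$ to a truncated tail of the exponential sums $S_k=\sum_{j}\e^{\i k\theta_j}$ for small $k\ge 1$. The $S_k$ are controlled by the power sums $\sum_{j}\xi_{j}^{k}$ and $\sum_j\xi_{j}^{-k}$ (the discrepancy between $S_k$ and these being absorbed by the radius estimate already proved), which via Newton's identities are expressible in terms of the coefficients of $f$ and of its reciprocal polynomial $x^{d}f(1/x)$; the standard Mahler coefficient bounds $|a_k|\le\binom{d}{k}M(f)$ then convert this into the logarithmic control $\log(M(f)/\sqrt{|a_0a_d|})$. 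Optimising the truncation level and invoking the solution of the underlying extremal trigonometric problem yields the constant $c=\sqrt{2\pi/G}$.

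The radius inequality is essentially a one-line Markov-type argument from the Mahler identity. The main obstacle is the angle inequality, and specifically the extraction of the sharp constant $\sqrt{2\pi/G}$: a direct application of the elementary Erd\"os-Tur\'an-Koksma inequality yields only some unoptimised constant, and reaching the value above requires the extremal trigonometric argument of Ganelius about trigonometric polynomials best approximating the indicator of an arc, which I would either cite as a black box or reproduce.
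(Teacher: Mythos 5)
The paper itself does not prove Theorem \ref{mtunivariate}: it quotes the angle bound from Ganelius \cite{Gan54} (sharpening Erd\"os--Tur\'an \cite{ET50}) and the radius bound from Hughes--Nikeghbali \cite{HN08}, so your proposal must be measured against those proofs. Your radius argument is correct and is essentially the Hughes--Nikeghbali argument: the identity $\sum_j|\log|\xi_j||=2\log M(f)-\log|a_0a_d|$, the observation that each root outside the annulus contributes at least $-\log(1-\varepsilon)\ge\varepsilon$, and $M(f)\le\|f\|_{\sup}$ from Jensen's formula give exactly the stated inequality.

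The angle part, however, has a genuine gap at the step where you claim $S_k=\sum_j \e^{\i k\theta_j}$ is controlled by the power sums $p_k=\sum_j\xi_j^{k}$ and $\sum_j\xi_j^{-k}$ obtained from Newton's identities. First, the raw power sums are simply not small for polynomials with small $\|f\|_{\sup}/\sqrt{|a_0a_d|}$: for $f=(x-2)(x^{d-1}-1)$ one has $\|f\|_{\sup}\le 6$ and $|a_0a_d|=2$, while $|p_k|\ge 2^{k}-d$, which is exponentially large already for $k$ of the size of the truncation level $K\sim\sqrt{d/\log(\cdot)}$ that the Erd\"os--Tur\'an inequality requires; so no correct argument can show these sums are small. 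Second, the error $|S_k-p_k|$ is not ``absorbed by the radius estimate'': that estimate bounds only the \emph{number} of roots away from the unit circle, whereas a single root of modulus $2$ contributes about $2^{k}$ to $S_k-p_k$. Third, feeding $|a_j|\le\binom{d}{j}M(f)$ into Newton's identities yields bounds with binomial growth, far too weak for the needed $O(k\log(\|f\|_{\sup}/\sqrt{|a_0a_d|}))$ control. The missing idea, which is how Ganelius (and modern expositions of Erd\"os--Tur\'an) proceed, is to work with the \emph{damped} sums $\sum_j\min(r_j,r_j^{-1})^{k}\e^{\pm \i k\theta_j}$: these arise, up to a factor $2k$, as the Fourier coefficients of $\theta\mapsto\log|f(\e^{\i\theta})|$, hence are bounded via $\int_{-\pi}^{\pi}\bigl|\log\bigl(|f(\e^{\i\theta})|/\sqrt{|a_0a_d|}\bigr)\bigr|\,\d\theta$ and so by $\log(\|f\|_{\sup}/\sqrt{|a_0a_d|})$ up to constants (using Jensen again); one then passes from damped to undamped sums with $1-t^{k}\le k|\log t|$ and the same Mahler identity you already proved. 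Only after this reduction does Ganelius's extremal kernel argument --- which you may reasonably cite as a black box --- produce the constant $c=\sqrt{2\pi/G}$. As written, your reduction feeding into that black box does not work.
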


The more interesting (and hardest) part is the bound for the angle discrepancy.
The original Erd\"os-Tur\'an result states \cite{ET50}
$$
\Deltaa(Z(f)) \le 16\,  \sqrt{\frac{1}{d}\log\bigg(\frac{\sum_{j}|a_{j}|}{\sqrt{|a_0a_d|}}\bigg)}.
$$
A few years after that paper, Ganelius \cite{Gan54} replaced the
$\ell^{1}$-norm $\sum_{j}|a_{j}|$ by the smaller quantity $\Vert
f\Vert_{\sup}$ and improved the value of the constant to $c\le
\sqrt{2\pi/G}$.  On the other hand, Amoroso and Mignotte
\cite{AM96} showed that the optimal value of $c$ cannot be smaller
than $\sqrt{2}$. The bound for the radius discrepancy is due to Hughes
and Nikeghbali~\cite{HN08}.

\medskip Here, we study the distribution of the solutions of a system
of multivariate polynomial equations in the algebraic torus
$(\C^\times)^n$. For instance, consider the following
system of bivariate polynomials:
\begin{equation}\label{33}
  f_{1}=x_{1}^{13}+x_{1}x_{2}^{12}+x_{2}^{13}+1, \quad
  f_{2}=x_{1}^{12}x_{2}-x_{2}^{13}-x_{1}x_{2}+1 \quad \in \C[x_{1},x_{2}].
\end{equation}
These are polynomials with moderate degree and small integer
coefficients.  By direct computation, we can verify that the solutions
in $(\C^{\times})^{2}$ of the system of equations $f_{1}=f_{2}=0$ are
aproximately equidistributed near the unit polycircle $S^{1}\times S^1 $
(Figure~\ref{fig:6}).
\begin{figure}[htpb]
  \begin{tabular}{ccc}
\includegraphics[scale=0.30]{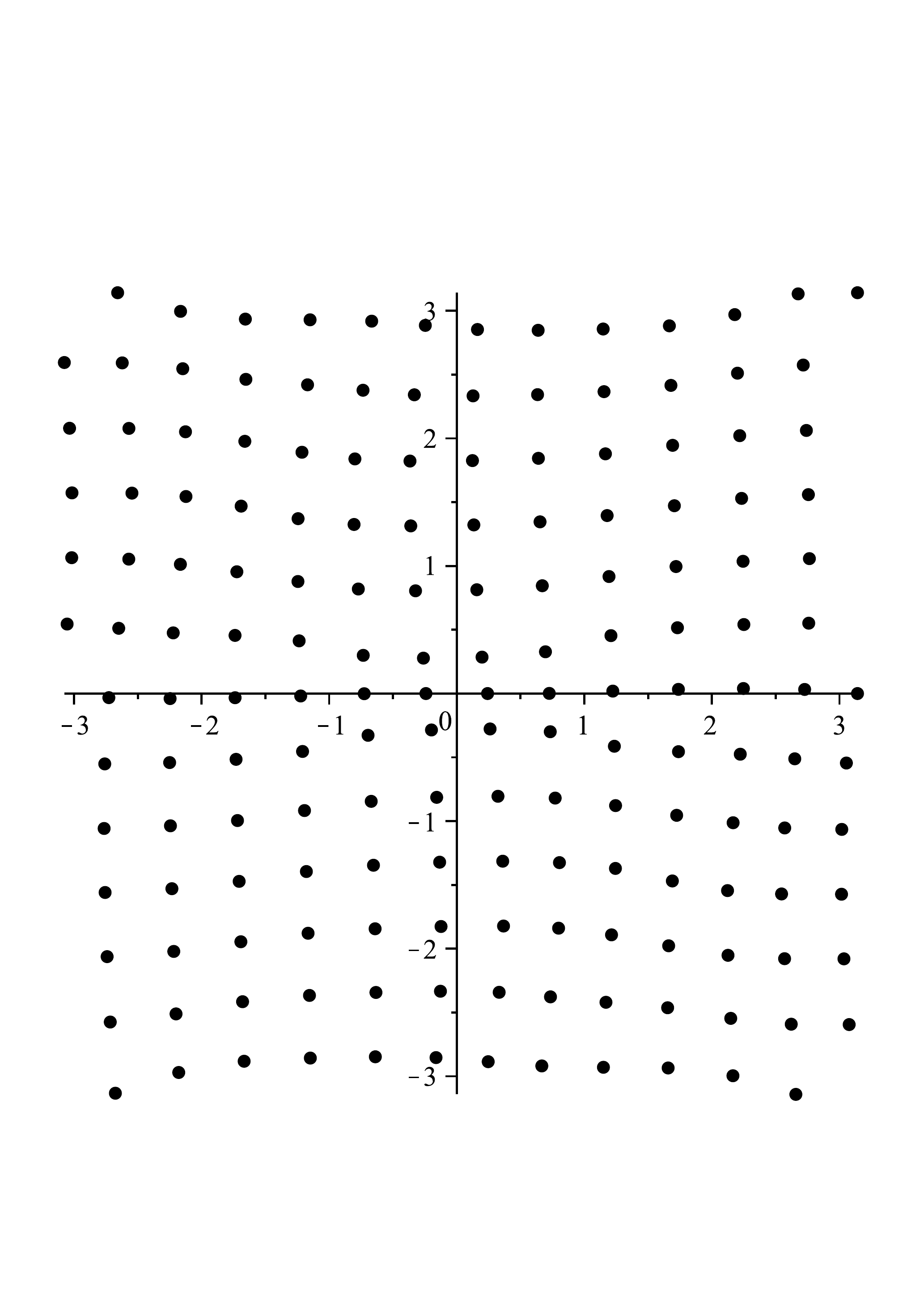}&\hspace*{6mm}& \includegraphics[scale=0.30]{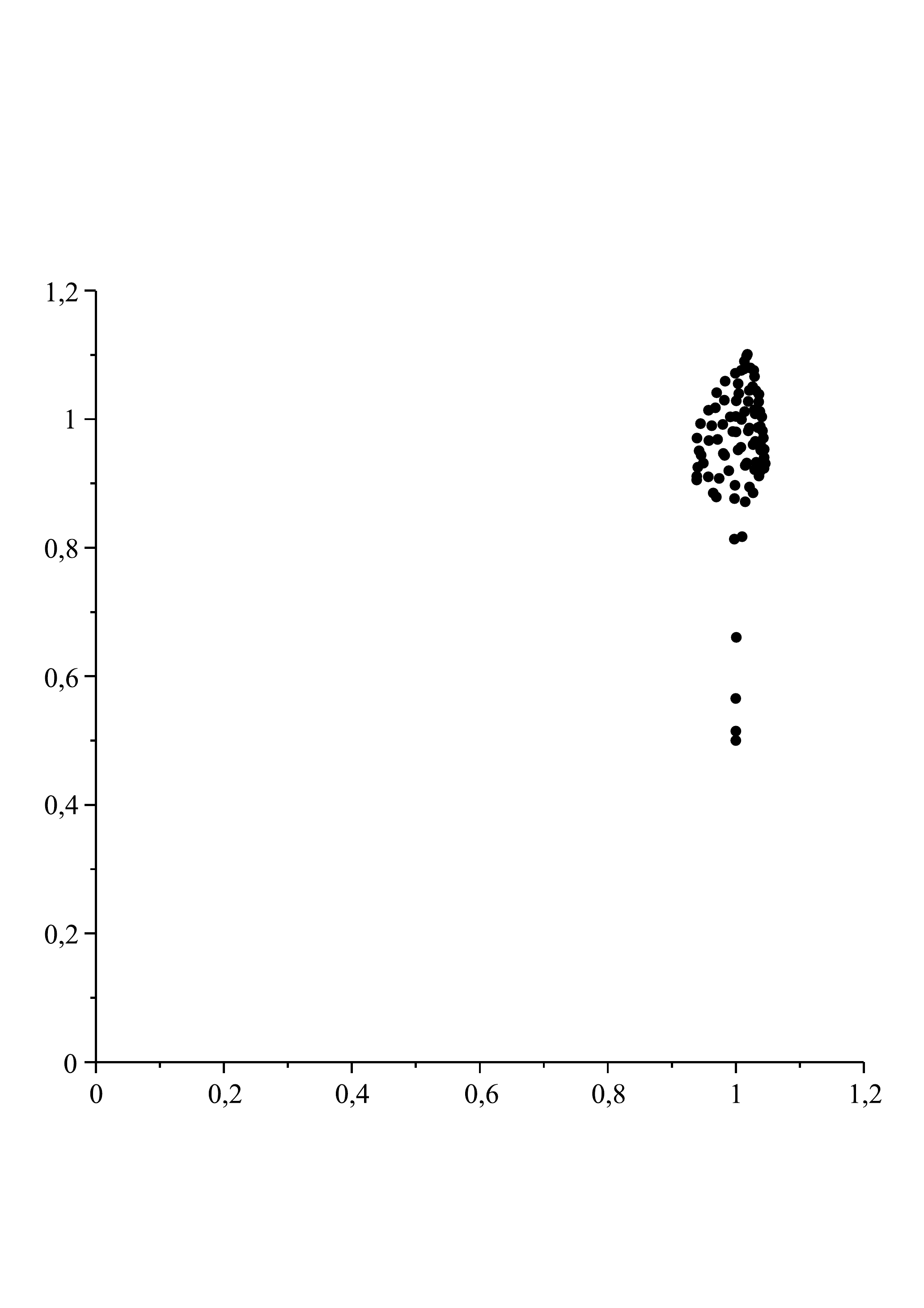}    
  \end{tabular}
 \vspace{-3mm}\caption{Angle and radius distribution of the zeros of
   the system \eqref{33}}\label{fig:6}
\end{figure}
This example and others of the same kind suggest that Theorem
\ref{mtunivariate} has an extension to higher dimensions.  

The study of the distribution of the solutions of a system of
multivariate polynomial equations has been addressed from different
perspectives. For instance, Khovanskii's theorem on complex fewnomials
~\cite[\S 3.13, Theorem~2]{Kho91} gives an estimate for the
distribution of the arguments of these solutions in terms of the
number of monomials and the Newton polytopes of the input
system. There are also several interesting results by Shiffman,
Zelditch and Bloom on the asymptotic distribution of the solutions of
a random system of polynomial equations, see for instance \cite{SZ04,
  BS07} and the references therein.

Our purpose in this text is to obtain an extension of
Theorem \ref{mtunivariate} to systems of Laurent polynomials
with a given support. For $i=1,\dots,n$, let $\cA_i$ be a non-empty
finite subset of $\Z^{n}$ and $Q_{i}=\conv(\cA_{i})\subset\R^{n}$ its convex
hull. Set $D=\MV_{\R^{n}}(Q_{1},\dots,Q_{n})$ for the mixed volume of these
lattice polytopes, and assume that $D\ge1$.  For each $i$,
let $f_{i}$ be a Laurent polynomial with support contained in
$\cA_{i}$, that is,
\begin{displaymath}
  f_i=\sum_{\bfa\in \cA_{i}}\alpha_{i,\bfa} \, \bfx^{\bfa}\in
\C[x_1^{\pm1},\dots, x_n^{\pm1}]
\end{displaymath}
with $\alpha_{i,\bfa}\in \C$ and $\bfx^{\bfa}=x_{1}^{a_{1}}\dots
x_{n}^{a_{n}}$ for each $\bfa=(a_{1},\dots,a_{n})\in \cA_{i}$.  We
write~$\bff=(f_{1},\dots, f_{n})$ for short.  We assume that, for all vectors
$\bfv\in \Z^{n}$, the directional resultant
$\Res_{\cA_{1}^{\bfv},\dots,\cA_{n}^{\bfv}}(f_{1}^{\bfv},\dots,f_{n}^{\bfv})$ 
(Definition \ref{def:1}) is nonzero. This condition holds for a
generic choice of~$\bff$ in the space of coefficients and, by
Bernstein's theorem \cite[Theorem~B]{Ber75}, it implies that all the
solutions of $f_{1}=\dots=f_{n}=0$ are isolated and that their number,
counted with multiplicities, is equal to $D$.

For a vector $\bfw\in S^{n-1}$ in the unit sphere of $\R^n$, we denote
by $\bfw^{\bot}\subset \R^{n}$ its orthogonal subspace and by
$\pi_{\bfw}\colon \R^{n}\to \bfw^{\bot}$ the corresponding orthogonal
projection. We denote by $\MV_{\bfw^{\bot}}$ the mixed volume of convex
bodies of $\bfw^{\bot}$ induced by the Euclidean measure on
$\bfw^{\bot}$ and, for $i=1,\dots,n$, we set
\begin{displaymath}
D_{\bfw,i}=\MV_{\bfw^\bot}\big(\pi_\bfw(Q_1),\dots,
\pi_\bfw(Q_{i-1}),\pi_\bfw(Q_{i+1}),\dots,\pi_\bfw(Q_n)).
\end{displaymath}

We then define the {\em Erd\"os-Tur\'an size} of  $\bff$ as
\begin{equation} \label{eq:1}
  \eta(\bff)= \frac{1}{D} \sup_{\bfw\in S^{n-1}}
\log\Bigg(\frac{\prod_{i=1}^{n}\|f_i\|_{\sup}^{D_{\bfw,i}}} 
 { \prod_{\bfv}|\Res_{\cA_{1}^{\bfv},\dots,\cA_{n}^{\bfv}}(f_{1}^{\bfv},\dots,f_{n}^{\bfv})|^{\frac{|\langle
          \bfv,\bfw \rangle|}{2}}}\Bigg),
\end{equation}
where the second product is over all primitive vectors $\bfv\in\Z^n$
that is, vectors whose coordinates do not have a non-trivial common
factor, and $\langle \cdot,\cdot\rangle$ is the standard inner product
of $\R^n$.  This product is finite because
$\Res_{\cA_{1}^{\bfv},\dots,\cA_{n}^{\bfv}}(f_{1}^{\bfv},\dots,f_{n}^{\bfv})\neq
1$ only if~$\bfv$ is an inner normal to a facet of the Minkowski sum
$Q_{1}+\cdots +Q_{n}$.  

The Erd\"os-Tur\'an size is a generalization
to the multivariate case of the quantity
$\frac{1}{d}\log\Big(\frac{\|f\|_{\sup}}{\sqrt{|a_0a_d|}}\Big)$ that
appears in Theorem \ref{mtunivariate} since, for $n=1$, it is easily
checked that $\eta(\bff)$ is exactly the preceding quantity
(Proposition \ref{prop:2}).

Let $Z(\bff)$ denote the 0-dimensional effective cycle
of~$(\C^\times)^n$ defined by the roots of~$\bff$ and  their multiplicities.  The
angle and radius discrepancies of cycles of $(\C^{\times})^{n}$ are
the obvious generalization of those for the univariate case (see
Definition~\ref{def:3}).

Our main result is the following:

\begin{theorem} \label{thm:1} For $n\ge2$, let $\cA_{1},\dots,\cA_{n}$ be non-empty
  finite subsets of $ \Z^{n} $, set $Q_{i}=\conv(\cA_{i})$ and assume
  that $\MV_{\R^{n}}(Q_{1},\dots,Q_{n})\ge1$.  Let $f_{1},\dots, f_{n}\in
  \C[x_{1}^{\pm1},\dots, x_{n}^{\pm1}]$ with $\supp(f_{i})\subset
  \cA_{i}$ and such that
  $\Res_{\cA_{1}^{\bfv},\dots,\cA_{n}^{\bfv}}(f_{1}^{\bfv},\dots,f_{n}^{\bfv})\ne
  0$ for all $\bfv\in \Z^{n}\setminus \{\bfzero\}$. Then
  \begin{equation*}
\Deltaa(Z(\bff))\leq 66\,n\, 2^{n} ( 18+
\log^{+}({\eta(\bff)^{-1}}))^{\frac23(n-1)} \eta(\bff)^{\frac13}
  \end{equation*}
with $\log^{+}(x)=\log(\max(1,x))$ for $x>0$. 
Also, for $0<\varepsilon <1,$
\begin{equation*}
\Deltar(Z(\bff),\varepsilon) \le  \frac{2n}{\varepsilon} \eta(\bff).  
\end{equation*}
\end{theorem}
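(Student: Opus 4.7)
The plan is to reduce the multivariate problem to the univariate Erd\"os-Tur\'an bound by pushing the zero cycle forward along monomial maps. For each primitive $\bfv\in\Z^n$, let $\phi_\bfv\colon(\C^\times)^n\to\C^\times$, $\bfx\mapsto\bfx^\bfv$; the pushforward $\phi_{\bfv,*}Z(\bff)$ is the root cycle of a univariate polynomial $R_\bfv\in\C[t]$ of degree $D=\MV_{\R^n}(Q_1,\dots,Q_n)$, obtained by eliminating $\bfx$ from $\bff=\bfzero$ together with the extra equation $t=\bfx^\bfv$. The definition of $\eta(\bff)$ is engineered precisely so that, setting $\bfw=\bfv/\|\bfv\|$,
\begin{displaymath}
  \|R_\bfv\|_{\sup}\le\prod_{i=1}^n\|f_i\|_{\sup}^{D_{\bfw,i}},
\qquad
  |a_0\,a_D|\ge\prod_{\bfv'=\pm\bfv}\bigl|\Res_{\cA_1^{\bfv'},\dots,\cA_n^{\bfv'}}(f_1^{\bfv'},\dots,f_n^{\bfv'})\bigr|^{\|\bfv\|/2},
\end{displaymath}
where $a_0,a_D$ are the extremal coefficients of $R_\bfv$. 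Plugging these estimates into Theorem~\ref{mtunivariate} applied to $R_\bfv$ then yields, for every primitive $\bfv$,
\begin{displaymath}
  \Deltaa(\phi_{\bfv,*}Z(\bff))\le c\sqrt{\|\bfv\|\,\eta(\bff)},
\qquad
  \Deltar(\phi_{\bfv,*}Z(\bff),\varepsilon)\le\frac{2\,\|\bfv\|}{\varepsilon}\eta(\bff).
\end{displaymath}

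The radius bound follows almost immediately from the special case $\bfv=\bfe_i$: a root $\bfxi$ of $\bff$ lies in the polyannulus $\{1-\varepsilon<|\xi_i|<(1-\varepsilon)^{-1}\text{ for all }i\}$ if and only if each projection $\phi_{\bfe_i,*}Z(\bff)$ has its image in the annulus, so a union bound over $i=1,\dots,n$ gives
\begin{displaymath}
  \Deltar(Z(\bff),\varepsilon)\le\sum_{i=1}^{n}\Deltar(\phi_{\bfe_i,*}Z(\bff),\varepsilon)\le\frac{2n}{\varepsilon}\eta(\bff).
\end{displaymath}

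For the angle bound, the plan is to apply the $n$-dimensional Erd\"os-Tur\'an-Koksma inequality on the torus $(\R/2\pi\Z)^n$: for any integer $H\ge1$,
\begin{displaymath}
 \Deltaa(Z(\bff))\le C_n\bigg(\frac{1}{H}+\sum_{0<\|\bfv\|_\infty\le H}\frac{1}{r(\bfv)}\bigg|\frac{1}{D}\sum_{\bfxi}m_\bfxi\,e^{\i\langle\bfv,\arg\bfxi\rangle}\bigg|\bigg),
\end{displaymath}
with $r(\bfv)=\prod_i\max(1,|v_i|)$. Using the radius bound above to replace each $\bfxi$ by its angular part $\bfxi/|\bfxi|$ (at a cost of order $\eta(\bff)$), the inner sum becomes the first Weyl sum of $\phi_{\bfv,*}Z(\bff)$, which is controlled via the elementary converse to Erd\"os-Tur\'an by the angle discrepancy of that pushforward, and so is of size at most $c\sqrt{\|\bfv\|\,\eta(\bff)}$; contributions from non-primitive $\bfv$ reduce to primitive ones by scaling. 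The weighted series $\sum_{\|\bfv\|_\infty\le H}\sqrt{\|\bfv\|}/r(\bfv)$ is of order $\sqrt{H}(\log H)^{n-1}$, leading to a total bound of the form $C_n\bigl(\tfrac{1}{H}+\sqrt{H}(\log H)^{n-1}\sqrt{\eta(\bff)}\bigr)$. Optimizing at $H\sim(\log\eta(\bff)^{-1})^{-2(n-1)/3}\eta(\bff)^{-1/3}$ produces a final bound of order $\eta(\bff)^{1/3}(\log^+\eta(\bff)^{-1})^{2(n-1)/3}$, in agreement with the exponents in the theorem.

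I expect two main obstacles. The first, algebraic one is the construction of $R_\bfv$ as a pullback of a toric Chow form along $\phi_\bfv$, together with the sharp sup-norm and extremal-coefficient identities above that reproduce $\eta(\bff)$ on the nose; this requires careful use of sparse resultant theory, mixed integrals over $\bfw^\bot$ and the behaviour of Newton polytopes under the projections $\pi_\bfw$. The second, analytic one is to propagate the explicit constants $66\,n\,2^n$ and $(18+\log^+\eta(\bff)^{-1})^{2(n-1)/3}$ through the Erd\"os-Tur\'an-Koksma optimization, in particular tracking the orthant decomposition that produces the factor $2^n$, the $n$-dependence from both the Koksma weighted sum and the inequality $\sum_i D_{\bfw,i}\le nD$, and the trade-off with the radius estimate when replacing roots by their angular parts.
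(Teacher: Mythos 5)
Your plan follows essentially the same route as the paper: push $Z(\bff)$ forward along every monomial map $\chi^{\bfv}$, realize the pushforward as the root cycle of a univariate elimination polynomial built from sparse resultants, control its sup-norm and extremal coefficients so that the univariate Erd\"os--Tur\'an bound (Theorem \ref{mtunivariate}) gives $\Deltaa(\chi^{\bfv}_{*}Z(\bff))\le c\sqrt{\|\bfv\|_2\,\eta(\bff)}$, take a union bound over coordinates for the radius discrepancy, and transfer the Weyl-sum estimates to the multivariate box discrepancy by a truncated Fourier argument optimized in the truncation parameter. The paper's Theorem \ref{thm:2} is exactly such an Erd\"os--Tur\'an--Koksma-type transference, proved by hand with smoothed indicator functions so that the constants $66\,n\,2^{n}$ and $(18+\log^{+}\eta(\bff)^{-1})^{\frac23(n-1)}$ come out explicitly; quoting the classical ETK inequality gives only an unspecified $C_n$, so, as you anticipate, the explicit form of the theorem forces you to redo that analysis anyway.

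Two intermediate formulas need correcting, and as written they do not imply the per-direction bound you assert. First, by the Poisson formula (Proposition \ref{prop:5}) the product of the extremal coefficients of the elimination polynomial for the direction $\bfv$ equals, up to sign, $\prod_{\bfv'}\Res_{\cA_{1}^{\bfv'},\dots,\cA_{n}^{\bfv'}}(f_{1}^{\bfv'},\dots,f_{n}^{\bfv'})^{|\langle\bfv',\bfv\rangle|}$, the product running over \emph{all} primitive $\bfv'$, not just $\bfv'=\pm\bfv$; your restricted lower bound is false in general (the omitted directional resultants can have modulus less than one), and in any case the denominator of $\eta(\bff)$ involves all primitive directions, so the comparison with $\eta$ requires the full product. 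Second, the sup-norm estimate must carry the factor $\|\bfv\|_{2}$, namely $\log\|R_{\bfv}\|_{\sup}\le\|\bfv\|_{2}\sum_{i}D_{\bfw,i}\log\|f_i\|_{\sup}$ (Lemma \ref{prop:1}), since the degree of the elimination polynomial in the coefficients of $f_i$ is $\|\bfv\|_{2}D_{\bfw,i}$. With these two fixes your per-direction and radius estimates coincide with the paper's, and your optimization in $H$ reproduces the exponents $\eta(\bff)^{1/3}(\log^{+}\eta(\bff)^{-1})^{\frac23(n-1)}$; note also that the step replacing each root by its angular part is superfluous, since the Weyl sums $\frac1D\sum_{\bfxi}m_{\bfxi}\e^{\i\langle\bfv,\arg(\bfxi)\rangle}$ depend only on the arguments.
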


Theorem \ref{mtunivariate} shows that these bounds for the angle and the
radius discrepancy also hold in the case $n=1$.  By analogy with
the one-dimensional case, it is natural to ask if, in the setting of
our result, a stronger inequality of the form
\begin{displaymath}
  \Deltaa(Z(\bff))\leq
c(n)\,\eta(\bff)^{\frac12}
\end{displaymath}
holds, with $c(n)>0$ not depending on $\bff$. It would be interesting to settle this question.

\medskip
Theorem \ref{thm:1} has several  consequences. For instance, we can
derive from it a bound for the number of positive real solutions of a
 system of polynomial equations, in terms of its Erd\"os-Tur\'an
size.  For a cycle $Z=\sum_{\bfxi}m_{\bfxi}[\bfxi] $ of $(\C^{\times})^{n}$,
set
\begin{displaymath}
  Z_{+}= \sum_{\bfxi \in (\R_{>0})^{n}}m_{\bfxi}[\bfxi].
\end{displaymath}
 
The following statement follows immediately from Theorem \ref{thm:1} and
the definition of the angle discrepancy.

\begin{corollary}
  \label{cor:3}
Let  notation be as  in Theorem \ref{thm:1}. Then
\begin{displaymath}
\deg(Z(\bff)_{+})\leq 66\,n\, 2^{n} ( 18+
\log^{+}({\eta(\bff)^{-1}}))^{\frac23(n-1)} \eta(\bff)^{\frac13} 
\, \deg(Z(\bff)).
\end{displaymath}
\end{corollary}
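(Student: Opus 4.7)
The plan is to observe that every positive real solution $\bfxi\in (\R_{>0})^{n}$ of the system $\bff=\bfzero$ has all coordinate arguments $\arg(\xi_{i})$ equal to zero, so that the cycle $Z(\bff)_{+}$ sits entirely above the single point $\bfzero\in(-\pi,\pi]^{n}$ in the torus of arguments. Theorem \ref{thm:1}, on the other hand, forces the angular distribution of $Z(\bff)$ to be close to uniform, and contrasting these two facts through the definition of $\Deltaa$ gives the corollary at once.

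Concretely, I would invoke the multivariate angle discrepancy (Definition \ref{def:3}), which is the supremum, taken over angular boxes $B=\prod_{i=1}^{n}(\alpha_{i},\beta_{i}]\subset(-\pi,\pi]^{n}$, of the quantity
$$
\bigg|\frac{\deg(Z(\bff)_{B})}{\deg(Z(\bff))}-\frac{\prod_{i=1}^{n}(\beta_{i}-\alpha_{i})}{(2\pi)^{n}}\bigg|.
$$
For each $t>0$, consider the box $B_{t}=(-t,0]^{n}$: every positive real solution has all coordinate arguments equal to $0\in(-t,0]$, so $Z(\bff)_{+}\subset Z(\bff)_{B_{t}}$ as effective cycles, and therefore
$$
\frac{\deg(Z(\bff)_{+})}{\deg(Z(\bff))}\le \frac{\deg(Z(\bff)_{B_{t}})}{\deg(Z(\bff))}\le \Deltaa(Z(\bff))+\bigg(\frac{t}{2\pi}\bigg)^{n}.
$$
Letting $t\to 0^{+}$ yields $\deg(Z(\bff)_{+})\le \Deltaa(Z(\bff))\cdot\deg(Z(\bff))$, and substituting the angle discrepancy bound from Theorem \ref{thm:1} delivers the claimed inequality.

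There is no substantial obstacle: the argument is an unpacking of definitions together with the main theorem. The only point worth double-checking is that the multivariate $\Deltaa$ really is defined through half-open boxes in the natural way, so that a box of the form $(-t,0]^{n}$ captures the angular contribution of every point of $Z(\bff)_{+}$ while having Lebesgue measure tending to zero as $t\to 0^{+}$.
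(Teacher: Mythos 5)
Your proof is correct and is exactly the unpacking the paper intends: the paper simply states that the corollary "follows immediately from Theorem \ref{thm:1} and the definition of the angle discrepancy," and your argument—placing all positive real roots in a shrinking box $(-t,0]^{n}$ about argument $\bfzero$ and letting $t\to 0^{+}$—is precisely that immediate deduction. (Your closing caveat is also resolved correctly: the intended definition of $Z_{\bfalpha,\bfbeta}$ uses the half-open condition $\alpha_{j}<\arg(\xi_{j})\le\beta_{j}$, as in the univariate case, so the box $(-t,0]^{n}$ does capture every point of $Z(\bff)_{+}$.)
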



We can also apply our result to study the asymptotic distribution of
the roots of a sequence of systems of polynomials over $\Z$ with
growing supports and whose coefficients are not too big. To be more
precise, let $Q_i$, $i=1,\dots, n$, be lattice polytopes in $\R^{n}$
such that $\MV_{\R^{n}}(Q_1,\ldots, Q_n)\ge1$. For each integer $\kappa\ge 1$
and $i=1,\dots, n$, consider the finite subset of $\Z^{n}$ given by
\begin{equation}
  \label{eq:3}
\cA_{\kappa, i}=\kappa Q_i\cap\Z^n .
\end{equation}
For a  Laurent
polynomial $f\in \C[x_{1}^{\pm1},\dots,x_{n}^{\pm1}]$, we denote by
$\supp(f)$ its  support, defined as the subset of $\Z^{n}$ of its
exponent vectors. We also set 
\begin{displaymath}
 \|f\|_{\sup}=\sup_{|w_{1}|=1,\dots, |w_{n}|=1}|f(w_{1},\dots,w_{n})|.    
\end{displaymath}

For a nonzero cycle $Z=\sum_{\bfxi}m_{\bfxi}[\bfxi]$ of
$(\C^{\times})^{n}$, we consider the discrete probability measure on
$(\C^{\times})^{n}$ defined by
$$
\delta_{Z}= \frac{1}{\deg(Z)}\sum_{\bfxi}m_{\bfxi} \delta_\bfxi ,
$$
where $\delta_\bfxi$ is the Dirac measure supported on the point
$\bfxi$.  Let $\nu_{\haar}$ be the measure on~$(\C^{\times})^{n}$ supported on
$(S^{1})^{n}$ and whose restriction to this polycircle coincides
with its Haar measure of total mass 1.  

Recall that a sequence of measures $(\nu_{\kappa})_{\kappa\in\N}$
on~$(\C^{\times})^{n}$ converges weakly to $\nu_{\haar}$ if, for every
continuous function with compact support~$h\colon (\C^{\times})^{n}\to
\R$, it holds
\begin{displaymath}
  \lim_{\kappa\to\infty}\int_{(\C^{\times})^{n}}h \, \d\nu_{\kappa}=\int_{(\C^{\times})^{n}}h \, \d\nu_{\haar}.
\end{displaymath}
If this is the case, we write $\lim_{\kappa\to \infty}\nu_{\kappa}=\nu_{\haar}$. 

\begin{theorem} \label{cor:2} For $\kappa\ge 1$, let $\bff_{\kappa}=(f_{\kappa, 1},\dots,
  f_{\kappa, n}) $ be a family of Laurent polynomials in $\Z[x_1^{\pm1},\dots,
  x_n^{\pm1}] $ such that $ \supp(f_{\kappa, i})\subset \kappa Q_{i}$, $
  \log\|f_{\kappa, i}\|_{\sup}= o(\kappa)$, and 
  $\Res_{\cA_{\kappa, 1}^{\bfv},\dots,\cA_{\kappa, n}^{\bfv}}(f_{\kappa, 1}^{\bfv},\ldots,f_{\kappa, n}^{\bfv})\ne0$
  for  all $\bfv\in \Z^{n}\setminus \{\bfzero\}$. Then
\begin{displaymath}
  \lim_{\kappa\to\infty}\delta_{Z(\bff_{\kappa})} = \nu_{\haar}.
\end{displaymath}
\end{theorem}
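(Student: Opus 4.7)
The plan is to apply Theorem \ref{thm:1} to each $\bff_\kappa$, show that both its discrepancy bounds tend to zero, and then deduce the weak convergence of $\delta_{Z(\bff_\kappa)}$ to $\nu_\haar$. To estimate the Erd\"os--Tur\'an size $\eta(\bff_\kappa)$, I would use that, by homogeneity of the mixed volume, $\MV_{\R^n}(\kappa Q_1,\dots,\kappa Q_n)=\kappa^n D$ with $D=\MV_{\R^n}(Q_1,\dots,Q_n)$, while the mixed volumes of projected facets appearing in~\eqref{eq:1} scale as $\kappa^{n-1}D_{\bfw,i}$. Since each $f_{\kappa,i}$ has integer coefficients, every nonzero directional resultant is a nonzero integer, hence of absolute value at least~$1$; dropping its nonnegative logarithmic contribution from~\eqref{eq:1} and simplifying yields
\begin{equation*}
\eta(\bff_\kappa)\;\le\;\frac{1}{\kappa D}\,\sup_{\bfw\in S^{n-1}}\sum_{i=1}^n D_{\bfw,i}\,\log^{+}\|f_{\kappa,i}\|_{\sup}\;\le\;\frac{C}{\kappa}\,\max_i\log^{+}\|f_{\kappa,i}\|_{\sup}
\end{equation*}
for a constant $C>0$ depending only on $Q_1,\dots,Q_n$, where I use the continuity of mixed volume on the compact sphere $S^{n-1}$ to get a uniform bound on $\sum_i D_{\bfw,i}$. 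The hypothesis $\log\|f_{\kappa,i}\|_{\sup}=o(\kappa)$ then immediately gives $\eta(\bff_\kappa)\to 0$.

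Applying Theorem \ref{thm:1}, the polylogarithmic factor $(18+\log^{+}(\eta^{-1}))^{2(n-1)/3}$ is absorbed by $\eta^{1/3}$, so $\Deltaa(Z(\bff_\kappa))\to 0$; similarly, for each fixed $\varepsilon\in(0,1)$, the bound $\Deltar(Z(\bff_\kappa),\varepsilon)\le 2n\,\eta(\bff_\kappa)/\varepsilon$ tends to zero.

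It remains to extract the weak convergence from the vanishing of both discrepancies. Given $\varepsilon\in(0,1)$, the vanishing of $\Deltar(\cdot,\varepsilon)$ implies that the proportion of roots of $\bff_\kappa$ outside the compact polyannulus $A_\varepsilon=\{\bfz\in(\C^\times)^n : 1-\varepsilon\le|z_i|\le (1-\varepsilon)^{-1}\}$ tends to~$0$, which gives tightness of the family $\{\delta_{Z(\bff_\kappa)}\}_\kappa$ and forces every subsequential weak limit to be supported on the polycircle $(S^1)^n$. For a test function $h$ continuous with compact support in $(\C^\times)^n$, I would write $h=h\circ\pi_{\arg}+(h-h\circ\pi_{\arg})$, where $\pi_{\arg}\colon(\C^\times)^n\to(S^1)^n$ is the radial retraction; the uniform continuity of $h$ on $A_\varepsilon$ together with the radial bound make the remainder arbitrarily small as $\kappa\to\infty$, and $\Deltaa(Z(\bff_\kappa))\to 0$ --- interpreted, in its natural multivariate form, as a supremum of box discrepancies over $[-\pi,\pi]^n$ --- ensures that $\int h\circ\pi_{\arg}\,\d\delta_{Z(\bff_\kappa)}\to \int h\,\d\nu_\haar$. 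The main subtlety lies in this last implication: one has to match the weak convergence of the angular pushforward measures on $(S^1)^n$ with the precise multivariate definition of $\Deltaa$ adopted in the paper, after which a standard $\varepsilon$--$\delta$ argument combining the two discrepancy estimates closes the proof.
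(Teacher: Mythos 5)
Your proposal is correct and takes essentially the same route as the paper: your bound $\eta(\bff_{\kappa})\le \frac{C}{\kappa}\max_i\log^{+}\|f_{\kappa,i}\|_{\sup}$, obtained from the $\kappa$-scaling of the mixed volumes and the fact that nonzero integer directional resultants have absolute value at least $1$, is exactly the content of Proposition \ref{prop:3}\eqref{item:8}/Proposition \ref{prop:7}, after which Theorem \ref{thm:1} makes both discrepancies vanish. The only (cosmetic) difference is in the final measure-theoretic step: the paper tests against indicators of product sets $U$ as in \eqref{eq:13}, splitting into the cases $U\cap(S^1)^n=\emptyset$ and $U\cap(S^1)^n\neq\emptyset$, whereas you push $\delta_{Z(\bff_\kappa)}$ forward by the radial retraction and use the standard fact that vanishing box discrepancy on $(S^1)^n$ implies weak convergence to Haar, which is an equivalent organization of the same $\varepsilon$--$\delta$ argument.
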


This result admits a quantitative version giving information on the
rate of convergence, which we state in Proposition \ref{prop:7}.
Theorem~\ref{cor:2} is related to Bilu's equidistribution theorem for
the Galois orbit of algebraic points in $(\C^{\times})^{n}$ of small
height \cite{Bil97} which, at least for $n=1$, also admits
quantitative versions \cite{Pet05,FR06}. 


\medskip We can also apply Theorem \ref{thm:1} to study the
distribution of the roots of a random system of Laurent polynomials
over $\C$. We will show that, under some mild conditions and
without assuming any independence or equidistribution condition on the
coefficients of the system, these roots tend to cluster uniformly near
$(S^{1})^{n}$.

To state this result, let us keep notation as above and set
$\bfcA_{\kappa}=(\cA_{\kappa,1},\dots, \cA_{\kappa,n})$ with $\cA_{\kappa,i}$ as
in~\eqref{eq:3}.  Each point of the projective space
$\P(\C^{\bfcA_{\kappa}})$ can be identified with a system
$\bff_\kappa=(f_{\kappa,1},\dots,f_{\kappa,n})$ of Laurent polynomials such that
$\supp(f_{\kappa,i})\subset \kappa Q_{i}$, $i=1,\dots, n$, modulo a
multiplicative scalar. The associated cycle $Z(\bff_\kappa)$ is
well-defined, since it does not depend on this multiplicative scalar.

Let $\mu_{\kappa}$ be the normalized Fubini-Study measure on
$\P(\C^{\bfcA_{\kappa}})$ of total mass 1, and $\lambda_{\kappa}$ a probability
density function on $ \P(\C^{\bfcA_{\kappa}})$ (see \S \ref{sec:equid-prob}
for details).  Let $\bff_\kappa$ be a random system of Laurent polynomials
with $\supp(f_{\kappa,i})\subset \kappa Q_{i}$, $i=1,\dots, n$, distributed
according to the probability law given by $\lambda_{\kappa}$ with respect to
$\mu_{\kappa}$.  The \emph{expected zero density measure} of $\bff_\kappa$ is the
measure on $(\C^{\times})^{n}$ defined, for a Borel subset $U$, as
\begin{displaymath}
  \E(Z(\bff_\kappa);
  \lambda_\kappa)(U)=\int_{\P(\C^{\bfcA_\kappa})}{\deg(Z(\bff_\kappa)|_{U})}\, \lambda_{\kappa}(\bff_\kappa)\,\d\mu_\kappa,
\end{displaymath}
where  $Z(\bff_\kappa)|_{U}$ denotes the  cycle $\sum_{\bfxi\in V(\bff_\kappa)_{0}\cap U}m_\bfxi[\bfxi].$

\begin{theorem} \label{thm:3} For $\kappa\ge1$, let $\lambda_{\kappa}$ be
  a probability density function on $\P(\C^{\bfcA_{\kappa}})$ with
  respect to the measure $\mu_{\kappa}$, and
  $\bff_\kappa=(f_{\kappa,1},\dots,f_{\kappa,n})$ a random system of
  Laurent polynomials with $\supp(f_{\kappa,i})\subset \kappa Q_{i}$,
  $i=1,\dots, n$, distributed according to the probability law given
  by $\lambda_{\kappa}$. Assume that the sequence
  $(\lambda_{\kappa})_{\kappa\ge1}$ is uniformly bounded. Then
\begin{displaymath}
  \lim_{\kappa\to\infty}\frac{\E(Z(\bff_\kappa);{\lambda_{\kappa}})}{\kappa^{n}\MV_{\R^{n}}(Q_{1},\dots,Q_{n})} =\nu_{\haar}.
\end{displaymath}
\end{theorem}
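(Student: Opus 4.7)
\textbf{Proof plan for Theorem \ref{thm:3}.}

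The plan is to combine the deterministic bound of Theorem \ref{thm:1} with a probabilistic estimate for the Erd\"os--Tur\'an size $\eta(\bff_\kappa)$ of a Fubini--Study-random system, and conclude by a dominated-convergence argument using the uniform boundedness of $\lambda_\kappa$.

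First I would prove a Koksma-type inequality: for any continuous $h\colon(\C^{\times})^n\to\R$ with compact support, any $0<\varepsilon_{0}<1$, and any $0$-dimensional effective cycle $Z$ of $(\C^{\times})^n$,
\begin{displaymath}
\bigg|\int h\,\d\delta_{Z}-\int h\,\d\nu_{\haar}\bigg|\le c_{h}\big(\Deltaa(Z)+\Deltar(Z,\varepsilon_{0})+\omega_{h}(\varepsilon_{0})\big),
\end{displaymath}
with $c_{h}$ depending on $\|h\|_{\infty}$ and $\omega_{h}$ a modulus of continuity of $h$. The argument is a standard three-step approximation: truncate $h$ to a tubular neighbourhood of $(S^{1})^{n}$ (cost $\Deltar$), approximate the truncation by step functions on a fine grid of angular rectangles (cost $\omega_{h}$), and use the angle discrepancy on each coordinate. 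By Theorem \ref{thm:1}, it then suffices to show that $\eta(\bff_{\kappa})\to 0$ in probability with respect to $\lambda_{\kappa}\,\d\mu_{\kappa}$ as $\kappa\to\infty$, since then one can split the domain of integration into a good set $\{\eta(\bff_\kappa)\le\delta\}$ and its complement.

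For this I would bound $\E_{\mu_{\kappa}}[\eta(\bff_{\kappa})]$ and invoke Markov's inequality, using $\lambda_{\kappa}\le M$ to transfer the estimate to $\lambda_{\kappa}\,\d\mu_{\kappa}$. Since $\supp(f_{\kappa,i})\subset\kappa Q_{i}$ gives $D_{\kappa}=\kappa^{n}\MV_{\R^n}(Q_{1},\dots,Q_{n})$ and $D_{\bfw,i}=O(\kappa^{n-1})$, and since the primitive $\bfv$ contributing to \eqref{eq:1} are the facet normals of $Q_{1}+\dots+Q_{n}$ (independent of $\kappa$), a pointwise bound in $\bfw$ gives
\begin{displaymath}
\eta(\bff_{\kappa})\le \frac{C_{1}}{\kappa}\max_{i}\log^{+}\|f_{\kappa,i}\|_{\sup} + \frac{C_{2}}{\kappa^{n}}\sum_{\bfv}\log^{+}\frac{1}{|\Res_{\cA_{\kappa,1}^{\bfv},\dots,\cA_{\kappa,n}^{\bfv}}(f_{\kappa,1}^{\bfv},\dots,f_{\kappa,n}^{\bfv})|}
\end{displaymath}
with $C_{1},C_{2}$ depending only on $Q_{1},\dots,Q_{n}$. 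For the first term, Fubini--Study concentrates coefficient vectors on the unit sphere in $\C^{\cA_{\kappa,i}}$, so $\|f_{\kappa,i}\|_{\sup}\le\sqrt{|\cA_{\kappa,i}|}=O(\kappa^{n/2})$ and $\E_{\mu_{\kappa}}[\log^{+}\|f_{\kappa,i}\|_{\sup}]=O(\log\kappa)$. For the second term, I would use that $\Res$ is a polynomial of total degree $O(\kappa^{n-1})$ in the coefficients of $f_{\kappa,i}$ together with the standard integral $\int_{\P^{N}}\log(|\ell(z)|/\|z\|)\,\d\mu_{\rm FS}=-\tfrac12\sum_{k=1}^{N}k^{-1}$ for a linear form $\ell$, applied after a factorization of $\Res$ over an extension of $\C$ and using unitary invariance, to obtain $-\E_{\mu_{\kappa}}[\log|\Res|]=O(\kappa^{n-1}\log\kappa)$. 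Summing over the finitely many relevant $\bfv$ and dividing by $D_{\kappa}=\Theta(\kappa^{n})$ yields $\E_{\mu_{\kappa}}[\eta(\bff_{\kappa})]=O(\log\kappa/\kappa)\to 0$.

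To finish, fix $h\in C_{c}((\C^{\times})^n)$ and $\delta>0$, and split
\begin{displaymath}
\int\bigg(\int h\,\d\delta_{Z(\bff_{\kappa})}\bigg)\lambda_{\kappa}\,\d\mu_{\kappa}
\end{displaymath}
according to whether $\eta(\bff_{\kappa})\le\delta$ or not. On the good set, Theorem \ref{thm:1} and the Koksma inequality bound the integrand uniformly by a quantity going to zero with $\delta$; on the complement it is bounded by $\|h\|_{\infty}$ times a probability vanishing as $\kappa\to\infty$, by Markov and the uniform boundedness of $\lambda_\kappa$. Letting first $\kappa\to\infty$ and then $\delta\to 0$ gives the weak convergence. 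The main obstacle is the lower bound for $\E_{\mu_\kappa}[\log|\Res|]$: the sparse resultant vanishes on a nontrivial hypersurface of coefficient space, and its logarithm can be extremely negative, so obtaining $\E_{\mu_\kappa}[\log|\Res|]\ge -O(\kappa^{n-1}\log\kappa)$ requires a careful use of $U(\bfcA_{\kappa})$-invariance of $\mu_{\kappa}$ together with sharp degree bounds for the sparse resultant as a polynomial in the coefficients.
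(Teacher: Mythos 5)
Your overall architecture (reduce weak convergence to discrepancy bounds, show that $\eta(\bff_\kappa)$ is small with high probability under $\lambda_\kappa\,\d\mu_\kappa$ using $\lambda_\kappa\le M$, then split the integral and let $\kappa\to\infty$) is sound and runs parallel to the paper's proofs of Theorem \ref{eqprob0} and Theorem \ref{thm:3}: your Koksma-type inequality plays the role of the paper's direct test on angular--radial rectangles, and your Markov/good-set splitting plays the role of the paper's bound on the expected discrepancies. The genuine gap is in the one step you yourself flag as the main obstacle, and the method you propose for it does not work as stated. The directional resultant $R_{\kappa,\bfv}$ is a homogeneous polynomial of degree $O(\kappa^{n-1})$ in the $\#\bfcA_\kappa\sim\kappa^n$ coefficient variables, and such a multivariate polynomial does not factor into linear forms over any extension of $\C$ (only binary forms do), so the identity $\int_{\P(\C^N)}\log(|\ell(\bfz)|/\|\bfz\|_2)\,\d\mu=-\tfrac12\sum_k k^{-1}$ for linear forms cannot be applied ``after a factorization of $\Res$''; unitary invariance of $\mu_\kappa$ by itself gives no lower bound on $\E_{\mu_\kappa}[\log|R_{\kappa,\bfv}|]$. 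A second point your sketch leaves implicit is normalization: any bound of the form $-\E_{\mu_\kappa}[\log(|R_{\kappa,\bfv}|/\|\cdot\|_2^{\deg})]=O(\deg\cdot\log)$ must use that $R_{\kappa,\bfv}$ is not itself small as a polynomial; the paper gets this for free because the sparse resultant has integer coefficients, so $\|R_{\kappa,\bfv}\|_{\sup}\ge1$ (this is exactly the second assertion of Proposition \ref{prop:4}), and your argument needs the same input.

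The step can be repaired in two ways. One is the slicing version of what you intend: take a point where $|R_{\kappa,\bfv}|/\|\cdot\|_2^{\deg}$ is maximal, restrict $R_{\kappa,\bfv}$ to the projective lines through that point (where it does factor into linear factors), apply the one-variable formula on each line and integrate over the pencil; this yields $\E_{\mu_\kappa}\big[\log^+\big(\|\cdot\|_2^{\deg}/|R_{\kappa,\bfv}|\big)\big]=O(\deg(R_{\kappa,\bfv})\log(\#\bfcA_\kappa))$, which is what your Markov argument needs. The other is the paper's route, which avoids expectations of $\log|R_{\kappa,\bfv}|$ altogether: the \L ojasiewicz-type Lemma \ref{lemm:2} plus the Beltr\'an--Pardo tube estimate give Proposition \ref{prop:4}, which shows that the set where $|R_{\kappa,\bfv}|/\|\cdot\|_2^{\deg}<\kappa^{-2n\deg(R_{\kappa,\bfv})}$ has $\mu_\kappa$-measure $O(\kappa^{-1})$; on the complement one has the pointwise bound $\log^+|R_{\kappa,\bfv}(\bff_\kappa^{\bfv})^{-1}|\le 2n\deg(R_{\kappa,\bfv})\log\kappa$, and Proposition \ref{prop:3}\eqref{item:8} then gives $\eta(\bff_\kappa)=O(\log(\kappa+1)/\kappa)$ there, after which Theorem \ref{thm:1} and the rectangle argument conclude. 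Either repair gives the same rate $O(\log\kappa/\kappa)$ you aimed for; what the paper's excision buys is that it never has to control the (possibly very negative) tail of $\log|R_{\kappa,\bfv}|$, only the measure of the set where it occurs.
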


As an application, consider a random system of Laurent polynomials
$\bff_\kappa$ with $\supp(f_{\kappa, i})\subset \kappa Q_{i}$ whose coefficients are
independent complex Gaussian random variables with mean 0 and variance
1.  The random cycle $Z(\bff_\kappa )$ might be described by the uniform
distribution on $\P(\C^{\bfcA_{\kappa}})$ (see Example \ref{exm:1} for
details).  Then, Theorem~\ref{thm:3} implies that the roots of
$\bff_\kappa $ converge weakly to the equidistribution on $(S^{1})^{n}$, and
we recover in this way a result of Bloom and Shiffman
\cite[Example~3.5]{BS07}.

\medskip Our strategy for proving Theorem \ref{thm:1} consists of
reducing to the univariate case.  In \S \ref{sec:angle-radi-distr}, we
consider the problem of studying the angle and radius discrepancies of
an arbitrary 0-dimensional effective cycle $Z$ in $(\C^{\times})^{n}$
in terms of the angle and radius discrepancies of its direct images
under all monomial projections of $(\C^{\times})^{n}$
onto~$\C^{\times}$. By applying a tomography process based on Fourier
analysis, we show that the distribution of $Z$ can be controlled in
terms of the distribution of its projections (Theorem~\ref{thm:2}).
In~\S \ref{sec:bounds-discr-terms}, we consider cycles defined by a
system of Laurent polynomials with given support and we compute their
direct image under monomial projections, in terms of sparse
resultants. Theorem \ref{thm:1} then follows by applying
Erd\"os-Tur\'an's theorem combined with Theorem~\ref{thm:2}, and the
basic properties of the sparse resultant.

In \S~\ref{sec:equid-prob}, we study the asymptotic distribution of
the roots  of a sequence of systems of Laurent polynomials over $\Z$ and of
random systems of Laurent polynomials over~$\C$. 
In both situations, the key step consists of bounding  from below the size of
the relevant directional resultants. In the case of systems over $\Z$, this
is trivial since these directional resultants are nonzero integer
numbers. In the case of random systems over $\C$, the result follows
from  an estimate of the volume of a tube around an algebraic
variety due to Beltr\'an and Pardo~\cite{BP:edcnsm}.


\vspace{3mm}
\noindent{\bf Acknowledgments.} 
We thank Carlos Beltr\'an and Michael Shub for useful discussions and
pointers to the literature. Part of this work was done while the
authors met at Universitat de Barcelona and Universit\'e de
Nice--Sophia Antipolis. We thank these institutions for their
hospitality.

\section{Angle and radius distribution in the multivariate case}\label{sec:angle-radi-distr}

In this section, we show that the angle and radius discrepancies of an
effective 0-dimensional cycle in the algebraic torus $(\C^{\times})^{n}$ can be
bounded in terms of the angle discrepancy of its image under monomial
maps from $(\C^{\times})^{n}$ to $\C^{\times}$.

Let $Z$ be a nonzero effective 0-dimensional
cycle of $(\C^{\times})^{n}$, which we write as a finite sum
\begin{displaymath}
  Z=\sum_{\bfxi}m_{\bfxi}[\bfxi] 
\end{displaymath}
with $m_{\bfxi}\in\N$ and $\bfxi\in (\C^\times)^n$.
The \emph{support} of $Z$ is the finite subset of $(\C^{\times})^{n}$
defined as $|Z|=\{\bfxi\mid m_{\bfxi}\ge 1\}$, and the \emph{degree} of $Z$ is the positive number
$\deg(Z)=\sum_\bfxi m_\bfxi.$

\begin{definition} \label{def:3} 
Let $  Z=\sum_{\bfxi}m_{\bfxi}[\bfxi] $ be a nonzero effective 0-dimensional
cycle of $(\C^{\times})^{n}$.
For each  $\bfalpha=(\alpha_{1},\dots, \alpha_{n})$ and
$\bfbeta=(\beta_{1},\dots, \beta_{n})$ with $-\pi\le
\alpha_{j}<\beta_{j}\le \pi$, $j=1,\dots, n$, consider the cycle 
\begin{displaymath}
  Z_{\bfalpha,\bfbeta}=\sum_{-\alpha_{j}<\arg(\xi_{j})\le \beta_{j}} m_{\bfxi}[\bfxi].
\end{displaymath}
The
{\em angle discrepancy} of $Z$ is defined as
\begin{equation*}
  \Deltaa(Z)= \sup_{\bfalpha, \bfbeta}\bigg|  \frac{\deg(Z_{\bfalpha, \bfbeta})}{\deg(Z)}-
  \prod_{j=1}^{n}\frac{\beta_{j}-\alpha_{j}}{2\pi}\bigg|.
\end{equation*}
Let $0<\varepsilon <1$ and consider also the cycle 
\begin{displaymath} 
  Z_{\varepsilon}= \sum_{1-\varepsilon<|\xi_j|<(1-\varepsilon)^{-1}} m_{\bfxi}[\bfxi],
 \end{displaymath}
where $\xi_{j}$ is the $j$-th coordinate of $\bfxi$.
The {\em radius discrepancy} of $Z$ with
respect to $\varepsilon$ is defined as
$$
\Deltar(Z, \varepsilon)=1- \frac{\deg(Z_{\varepsilon})}{\deg(Z)}.
$$ 
\end{definition}

We have $0< \Deltaa(Z)\leq1$.  Observe also that $0\le
\Deltar(Z,\varepsilon)\leq1$ and $\Deltar(Z, \varepsilon)=0$ for
all $\varepsilon$ if and only if $|Z|\subset (S^{1})^{n}$.

For a lattice point $\bfa=(a_{1},\dots,a_{n})\in \Z^{n}$ we denote by
$\chi^{\bfa}\colon (\C^{\times})^{n}\to \C^{\times}$ the associated
character, defined
as $\chi^{\bfa}(\bfxi)=\xi_{1}^{a_{1}}\dots\xi_{n}^{a_{n}}$ for $\bfxi\in(\C^{\times})^{n}$.
The direct image of $Z$ under $\chi^{\bfa}$ is the cycle of
$\C^{\times}$ given by 
\begin{displaymath}
  \chi^{\bfa}_{*}(Z)= \sum_{\bfxi} m_{\bfxi}\chi^{\bfa}(\bfxi). 
\end{displaymath}

We also set
\begin{equation}\label{theta}
  \theta(Z)=\sup_{\bfa\in \Z^{n}\setminus \{\bfzero\}}
  \frac{\Deltaa(\chi^{\bfa}_{*}(Z))}{\|\bfa\|_{2}^{\frac12}}, \quad 
  \rho(Z,\varepsilon)= \sum_{j=1}^{n}\Deltar(\chi^{\bfe_{j}}_{*}(Z), \varepsilon),
\end{equation}
where $\bfe_{j}$ denotes the $j$-th vector in the standard basis of
$\Z^{n},$ and $\|\bfa\|_2$ is the Euclidean norm of the vector $\bfa\in\Z^n$.
We have $0<\theta(Z)\leq1$ and $ 0\le \rho(Z,\varepsilon)\le 1$. 

\begin{theorem} \label{thm:2}
Let $Z$ be a nonzero effective 0-dimensional
cycle of $(\C^{\times})^{n}$. Then
$$
\Deltaa(Z)\le 22n\bigg(\frac83\bigg)^{n}
(9-\log(\theta(Z)))^{\frac23(n-1)} \, \theta(Z)^{\frac23}
$$
and, for $0<\varepsilon <1$, 
\begin{displaymath}
  \Deltar(Z ,\varepsilon)
\le \rho(Z,\varepsilon).
\end{displaymath}
\end{theorem}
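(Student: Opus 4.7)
The two inequalities are very different in nature. The bound for $\Deltar(Z,\varepsilon)$ is an elementary union bound over coordinates, while the bound for $\Deltaa(Z)$ is the heart of the theorem and carries out the ``tomography via Fourier'' mentioned in the introduction: the multivariate angular distribution of $Z$ is reconstructed from its one-dimensional images under all monomial characters through a truncated Fourier series, and the multidimensional Erd\H{o}s--Tur\'an--Koksma discrepancy inequality converts this into an explicit bound.

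\textbf{Radius.} A point $\bfxi\in|Z|$ fails to lie in $Z_\varepsilon$ iff $|\xi_j|\notin(1-\varepsilon,(1-\varepsilon)^{-1})$ for some coordinate $j$. Since $\chi^{\bfe_j}(\bfxi)=\xi_j$, such a $\bfxi$ contributes (with the same multiplicity) to the complement of $(\chi^{\bfe_j}_*(Z))_\varepsilon$ in $\chi^{\bfe_j}_*(Z)$. Counting multiplicities, dividing by $\deg(Z)=\deg(\chi^{\bfe_j}_*(Z))$, and summing over $j$ yields $\Deltar(Z,\varepsilon)\le\rho(Z,\varepsilon)$.

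\textbf{Angle.} Let $\nu_Z$ be the push-forward of the normalized counting measure of $Z$ to the real torus $(\R/2\pi\Z)^n$ by the componentwise argument map. Its Fourier coefficient at $\bfa\in\Z^n$ is
\begin{displaymath}
\widehat{\nu_Z}(\bfa)=\frac{1}{\deg(Z)}\sum_{\bfxi}m_{\bfxi}\,e^{i\sum_j a_j\arg(\xi_j)}=\frac{1}{\deg(Z)}\sum_{\bfxi}m_{\bfxi}\,e^{i\arg\chi^{\bfa}(\bfxi)},
\end{displaymath}
which is precisely the first Fourier coefficient of the argument distribution on $S^1$ of the pushforward cycle $\chi^{\bfa}_*(Z)$. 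A standard integration-by-parts estimate on the circle bounds the Fourier coefficient of a probability measure by $2\pi$ times its angular discrepancy, so that
\begin{displaymath}
|\widehat{\nu_Z}(\bfa)|\le 2\pi\,\Deltaa(\chi^{\bfa}_*(Z))\le 2\pi\,\theta(Z)\,\|\bfa\|_2^{1/2}\qquad\text{for every }\bfa\ne\bfzero.
\end{displaymath}
Applied to $\nu_Z$ at a cutoff $N\ge 1$, the $n$-dimensional Erd\H{o}s--Tur\'an--Koksma inequality then gives
\begin{displaymath}
\Deltaa(Z)\le C_1^{\,n}\bigg(\frac{1}{N}+2\pi\,\theta(Z)\,S(N,n)\bigg),\qquad S(N,n)=\sum_{0<\|\bfa\|_\infty\le N}\frac{\|\bfa\|_2^{1/2}}{\prod_j\max(1,|a_j|)},
\end{displaymath}
with $C_1$ a universal constant. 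The combinatorial sum $S(N,n)$ is estimated by partitioning the lattice box according to which coordinate realizes $\|\bfa\|_\infty=m$, using $\|\bfa\|_2^{1/2}\le n^{1/4}m^{1/2}$, and recognizing the contribution of the remaining coordinates as a product of truncated harmonic sums, which yields $S(N,n)\lesssim n\,N^{1/2}(\log N+3)^{n-1}$. Balancing the two terms via $N\asymp\theta(Z)^{-2/3}(\log\theta(Z)^{-1})^{-2(n-1)/3}$ and absorbing constants yields the stated bound.

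\textbf{Main obstacle.} The substantive work lies in the combinatorial estimate on $S(N,n)$: a naive bound $\|\bfa\|_2^{1/2}\le n^{1/4}N^{1/2}$ pulled outside the sum produces an extra $\log N$ factor and hence the wrong exponent $\tfrac{2n}{3}$ on the logarithmic factor. Extracting the sharper bound requires the coordinatewise decomposition described above. On top of that, tracking the explicit numerical prefactor $22n(8/3)^n$ and the additive constant $9$ inside the logarithm (which ensures the bound is valid in the regime where $\theta(Z)$ is not particularly small, so that the optimum $N$ stays at least $1$) requires careful book-keeping in the choice of the version of Erd\H{o}s--Tur\'an--Koksma and of the cutoff $N$.
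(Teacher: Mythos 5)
Your radius argument is exactly the paper's: a union bound over coordinates via the projections $\chi^{\bfe_j}$, using $\deg(\chi^{\bfe_j}_*(Z))=\deg(Z)$. For the angle discrepancy your route is genuinely different from the paper's. The paper does not invoke the Erd\H{o}s--Tur\'an--Koksma inequality; it proves the needed estimate from scratch: it builds a $C^1$ smoothing $h_{\alpha,\beta,\tau}$ of the box indicator (cubic transition $g(x)=-2x^3+3x^2$), bounds its Fourier coefficients by $\min\{\frac{1}{\pi a},\frac{3}{\pi\tau a^2}\}$, combines this with the exponential-sum bound $\big|\frac{1}{\deg Z}\sum_\bfxi m_\bfxi \e^{\i\langle\bfa,\arg\bfxi\rangle}\big|\le 2\pi\Deltaa(\chi^\bfa_*(Z))\le 2\pi\theta(Z)\|\bfa\|_2^{1/2}$ (your Fourier-coefficient identity and bound are precisely the paper's Lemma \ref{lemm:1}), decomposes the error into three explicit terms, optimizes the cutoff $q$, and finally pays a factor $2^n$ by subdividing boxes with a side longer than $\pi$ (needed because the smoothing requires $\beta_j-\alpha_j+2\tau<2\pi$). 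Your approach imports all of this machinery as a black box via ETK, which shortens the argument, avoids the $2^n$ subdivision, and — as you correctly stress — still requires the coordinatewise estimate of $S(N,n)$ to land on the exponent $\frac23(n-1)$ rather than $\frac23 n$ on the logarithm; your balancing of $N$ reproduces the correct shape $\theta^{2/3}(C-\log\theta)^{\frac23(n-1)}$.

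The one genuine gap is that the theorem as stated carries explicit constants, $22n\left(\frac83\right)^n$ and the additive $9$, and your proposal defers exactly the step that would establish them. This is not mere absorption of constants: the answer depends on which version of ETK you use. With the common star-discrepancy form (constant $\left(\frac32\right)^n$) you must pay an extra $2^n$ to pass to arbitrary boxes $\prod(\alpha_j,\beta_j]$, giving an effective $3^n$, and after the $S(N,n)$ estimate the prefactor grows like $n\,4^n$, which is \emph{not} below $22n\left(\frac83\right)^n$; with an extreme-discrepancy version at $\left(\frac32\right)^n$ the computation does fit, but that has to be checked and the cutoff chosen explicitly (including the floor, which can halve $N$). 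You must also dispose of the regime where the optimal cutoff is below $1$ — there the bound must follow trivially from $\Deltaa(Z)\le 1\le(9-\log\theta(Z))^{\frac23(n-1)}\theta(Z)^{2/3}$, as the paper does when its $q=0$ — and justify applying ETK to the weighted cycle (harmless, since the multiplicities are positive integers, but it should be said). So: the strategy is sound and would prove a bound of the stated form with \emph{some} explicit $c(n)$ exponential in $n$, but as written it does not yet prove the inequality with the paper's constants.
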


The rest of this section is devoted to the proof of this result.
Given two vectors
$\bfu=(u_{1},\dots,u_{n}),\bfv=(v_{1},\dots,v_{n})\in \R^{n}$ we write
$\langle\bfu,\bfv\rangle=\sum_{j=1}^{n}u_{j}v_{j}$ for their standard
inner product, and for $\bfxi\in(\C^\times)^n$ we set $\arg(\bfxi)=
(\arg(\xi_{1}),\dots, \arg(\xi_{n}))\in (-\pi,\pi]^n$.

\begin{lemma}
  \label{lemm:1}
Let $Z$ be a nonzero effective 0-dimensional
cycle of $(\C^{\times})^{n}$ and $\bfa\in \Z^{n}\setminus \{\bfzero\}$. Then 
\begin{displaymath}
  \Bigg|\frac{1}{\deg(Z)} \sum_{\bfxi} m_{\bfxi}\e^{\i \langle \bfa,
    \arg(\bfxi)\rangle}\Bigg| \le 2\pi\Deltaa(\chi^{\bfa}_{*}(Z)).
\end{displaymath}
\end{lemma}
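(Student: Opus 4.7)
\begin{Proof}{Proof sketch.}
The plan is to reduce the multivariate statement to a univariate estimate on the cycle $W:=\chi^{\bfa}_{*}(Z)\subset\C^{\times}$, and then bound the first Fourier coefficient of its argument distribution by the angle discrepancy via integration by parts.

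First I would observe that $\chi^{\bfa}(\bfxi)=\xi_{1}^{a_{1}}\cdots\xi_{n}^{a_{n}}$, so $\arg(\chi^{\bfa}(\bfxi))\equiv\langle\bfa,\arg(\bfxi)\rangle\pmod{2\pi}$, and hence
\begin{displaymath}
\e^{\i\langle\bfa,\arg(\bfxi)\rangle}=\e^{\i\arg(\chi^{\bfa}(\bfxi))}.
\end{displaymath}
Writing $W=\chi^{\bfa}_{*}(Z)=\sum_{\eta}n_{\eta}[\eta]$ with $n_{\eta}=\sum_{\chi^{\bfa}(\bfxi)=\eta}m_{\bfxi}$, one has $\deg(W)=\deg(Z)$ and
\begin{displaymath}
\frac{1}{\deg(Z)}\sum_{\bfxi}m_{\bfxi}\e^{\i\langle\bfa,\arg(\bfxi)\rangle}=\frac{1}{\deg(W)}\sum_{\eta}n_{\eta}\e^{\i\arg(\eta)}.
\end{displaymath}
Thus matters are reduced to showing that, for any nonzero effective $0$-dimensional cycle $W$ of $\C^{\times}$, the right-hand side has absolute value at most $2\pi\Deltaa(W)$.

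For this second step I would pass to the argument distribution on $(-\pi,\pi]$. Let $\mu=\frac{1}{\deg(W)}\sum_{\eta}n_{\eta}\delta_{\arg(\eta)}$ and let $G(t)=\mu((-\pi,t])$ denote its cumulative distribution function, while $G_{\operatorname{unif}}(t)=(t+\pi)/(2\pi)$ is the one of the normalized uniform measure on $(-\pi,\pi]$. Setting $F=G-G_{\operatorname{unif}}$, one has $F(-\pi)=F(\pi)=0$ and, directly from the definition of the angle discrepancy,
\begin{displaymath}
\sup_{t\in[-\pi,\pi]}|F(t)|\le\Deltaa(W),
\end{displaymath}
since the supremum in the definition of $\Deltaa(W)$ is taken over $-\pi\le\alpha<\beta\le\pi$ and includes the choice $\alpha=-\pi$.

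The key computation is then an integration by parts. Since $\int_{-\pi}^{\pi}\e^{\i t}\,\d t=0$, one gets
\begin{displaymath}
\frac{1}{\deg(W)}\sum_{\eta}n_{\eta}\e^{\i\arg(\eta)}=\int_{-\pi}^{\pi}\e^{\i t}\,\d\mu(t)=\int_{-\pi}^{\pi}\e^{\i t}\,\d(G-G_{\operatorname{unif}})(t).
\end{displaymath}
Integrating by parts, and using that $F$ vanishes at the endpoints, this equals $-\i\int_{-\pi}^{\pi}F(t)\e^{\i t}\,\d t$, whose absolute value is at most $\int_{-\pi}^{\pi}|F(t)|\,\d t\le 2\pi\sup_{t}|F(t)|\le 2\pi\Deltaa(W)$. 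Combining with the reduction above yields the desired inequality.

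I expect the only subtle point to be the integration by parts with a mixture of a Stieltjes measure coming from a discrete cycle and a continuous one, which is handled cleanly by working with the difference of distribution functions $F$ (which is a genuine function of bounded variation with known boundary values). Everything else is formal.
\end{Proof}
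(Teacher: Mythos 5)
Your proof is correct and follows essentially the same route as the paper: both arguments express $\frac{1}{\deg(Z)}\sum_{\bfxi}m_{\bfxi}\e^{\i\langle\bfa,\arg(\bfxi)\rangle}$ as an integral of $\e^{\i t}$ against the difference between the empirical distribution function of the arguments of $\chi^{\bfa}_{*}(Z)$ and the uniform one, and then bound that difference pointwise by the discrepancy (with $\alpha=-\pi$), picking up the factor $2\pi$ from the length of the interval. The only cosmetic difference is that the paper carries out the integration by parts explicitly with the step counting function $N(\nu)$ and the identity $\int_{-\pi}^{\pi}\frac{\nu+\pi}{2\pi}\e^{\i\nu}\,\d\nu=\i$, whereas you invoke Riemann--Stieltjes integration by parts for the bounded-variation function $F=G-G_{\mathrm{unif}}$, which is equally valid.
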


Note that for $\bfa={\bfzero}$ we get  $\frac{1}{\deg(Z)}\sum_{\bfxi} m_{\bfxi}\e^{\i \langle {\bfa}, \arg(\bfxi)\rangle}=1$.

\begin{proof}
  Set $D_{0}=\#|Z|$ and $D=\deg(Z)$ for short.  Let $-\pi\leq \nu_{k}<\pi$,
  $k=1,\dots, D_{0}$, denote the inner products $\langle \bfa,
  \arg(\bfxi)\rangle$ modulo $2\pi$ for the different points $\bfxi$ in the support
  of $Z$, and let $m_{k}$ denote their corresponding multiplicity. We
  suppose that these numbers are arranged in increasing order, that
  is, $\nu_{1}\le \dots\le \nu_{D_{0}}$.

For $-\pi<\nu\le \pi$ set
$$
N(\nu)=\sum_{k\mid \nu_k\leq\nu}m_{k}.
$$
We have 
\begin{multline} \label{eq:2}
 \int_{-\pi}^{\pi}N(\nu)\e^{\i\nu}d\nu =\sum_{k=1}^{D_{0}}\int_{\nu_{k}}^{\nu_{k+1}}
 \bigg(\sum_{l\le k}m_{l}\bigg)\e^{\i\nu} d\nu \\=
 -\i\sum_{k}\bigg(\sum_{l\le k}m_{l}\bigg)\,\e^{\i\nu}\bigg|_{\nu_{k}}^{\nu_{k+1}}=\i\Big(D+\sum_{k}m_{k}\e^{\i\nu_k}\Big),
\end{multline}
where we have set $\nu_{D_{0}+1}=\pi$.
On the other hand, an easy calculation shows that
\begin{equation}\label{dos}
  \int_{-\pi}^{\pi}\frac{\nu+\pi}{2\pi}
  \e^{\i\nu}d\nu=\i.
\end{equation}
Combining (\ref{eq:2}) and (\ref{dos}), we deduce that
$$
\frac{1}{D} \sum_{\bfxi} m_{\bfxi}\e^{\i \langle \bfa,
    \arg(\bfxi)\rangle}=\frac{1}{D}\sum_{k=1}^{D_{0}}m_{k}\e^{\i\nu_k}=\i\int_{-\pi}^{\pi}\left(\frac{\nu+\pi}{2\pi}-\frac{N(\nu)}{D}\right)\e^{\i\nu}d\nu.
$$
Hence
\begin{multline*}
\left|\frac{1}{D} \sum_{\bfxi} m_{\bfxi}\e^{\i \langle \bfa,
    \arg(\bfxi)\rangle}\right|\leq\int_{-\pi}^{\pi}\left|\frac{\nu+\pi}{2\pi}-\frac{N(\nu)}{D}\right|d\nu
\\=\int_{-\pi}^{\pi}\left|\frac{\nu+\pi}{2\pi}-\frac{\deg(\chi^{\bfa}_{*}(Z)_{-\pi,\nu})}{\deg(\chi^{\bfa}_{*}(Z))}\right|d\nu\le 2\pi\Deltaa(\chi^{\bfa}_{*}(Z)),
\end{multline*}
which concludes the proof.
\end{proof}

Let $\alpha,\beta,\tau\in\R$ such that $\alpha\leq\beta$ and
$\tau>0$. We consider the function
$h_{\alpha,\beta,\tau}\colon \R\to \R$ defined, for $x\in \R$, by 
$$
  h_{\alpha,\beta,\tau}(x)=
  \begin{cases}
0& \mbox{if }\, x\leq\alpha-\tau,\\
g\big(\frac{x-\alpha+\tau}{\tau}\big)&\mbox{if}\,\alpha-\tau<x\leq\alpha,\\
1&\mbox{if }\, \alpha<x\leq\beta,\\    
g\big(\frac{\beta+\tau-x}{\tau}\big) &\mbox{if }\,\beta<x\leq\beta+\tau,\\
0&\mbox{if }\, \beta+\tau<x,                                 
  \end{cases}
$$
with $g(x)=-2x^3+3x^2$.
Lemma \ref{univariate} below shows that  $h_{\alpha,\beta,\tau}$ is an approximation of the
characteristic function of the interval $[\alpha, \beta]$. 

For $m\in\N$, we denote by ${\mathcal C}^m(\R)$ the space of functions $f\colon \R\to\R$ having $m$~continuous derivatives.

\begin{lemma}\label{univariate} 
Let $\alpha,\beta,\tau\in\R$ such that $\alpha\leq\beta$ and
$\tau>0$.  Then
\begin{enumerate}
 \item \label{item:1} $h_{\alpha,\beta,\tau}\in{\mathcal C}^1(\R)$;
\item \label{item:2} $h_{\alpha,\beta,\tau}(x)=1$ for $x\in
  [\alpha, \beta]$, $h_{\alpha,\beta,\tau}(x)=0$ for $x\in
  (-\infty, \alpha - \tau] \cup [\beta+\tau, \infty)$,
  and  
$0\leq h_{\alpha,\beta,\tau}(x)\leq1$ for all $x\in\R$;
\item \label{item:3} $\int_{-\infty}^{\infty}
  h_{\alpha,\beta,\tau}dx=\beta-\alpha+\tau$ and,
  moreover,  
$\int_{\alpha-\tau}^\alpha
h_{\alpha,\beta,\tau}dx=\int_\beta^{\beta+\tau}h_{\alpha,\beta,\tau}dx=\frac{\tau}{2}$;
\item \label{item:4} $\int_{-\infty}^{\infty}|h'_{\alpha,\beta,\tau}|dx=2$;
\item \label{item:5} $\int_{-\infty}^{\infty}|h''_{\alpha,\beta,\tau}|dx=\frac{6}{\tau}.$
\end{enumerate}
\end{lemma}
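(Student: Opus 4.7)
The proof is a routine verification built on simple properties of the cubic $g(x)=-2x^3+3x^2$ and a change of variables. First I would record the properties of $g$ on $[0,1]$: $g(0)=0$, $g(1)=1$, $g'(x)=6x(1-x)$, and $g''(x)=6-12x$. In particular $g'(0)=g'(1)=0$ and $g'\ge 0$ on $[0,1]$, so $g$ is monotone from $0$ to $1$ and $0\le g\le 1$ there. These give items (\ref{item:1}) and (\ref{item:2}) at once: the function is defined piecewise so the only issue is at the four gluing points $\alpha-\tau,\alpha,\beta,\beta+\tau$, and by the chain rule the left- and right-hand values of $h_{\alpha,\beta,\tau}$ and of its derivative match at each of them precisely because $g(0)=g'(0)=0$ and $g(1)=1$, $g'(1)=0$. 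The bounds $0\le h_{\alpha,\beta,\tau}\le 1$ follow from the same bounds for $g$ on $[0,1]$.

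For item (\ref{item:3}) I would split the integral into the three pieces $[\alpha-\tau,\alpha]$, $[\alpha,\beta]$, $[\beta+\tau,\beta]$. The middle piece contributes $\beta-\alpha$. For the left tail, the substitution $u=(x-\alpha+\tau)/\tau$ gives
\begin{displaymath}
\int_{\alpha-\tau}^{\alpha} h_{\alpha,\beta,\tau}(x)\,\d x
=\tau\int_{0}^{1} g(u)\,\d u
=\tau\bigg(-\tfrac{1}{2}+1\bigg)=\tfrac{\tau}{2},
\end{displaymath}
and by the symmetric substitution $u=(\beta+\tau-x)/\tau$ the right tail also equals $\tau/2$. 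Summing these three contributions yields $\beta-\alpha+\tau$, and the individual tail equalities are exactly the intermediate formulas stated.

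For item (\ref{item:4}), the same two substitutions give $|h'_{\alpha,\beta,\tau}(x)|=|g'(u)|/\tau$ on each tail; since $g'\ge 0$ on $[0,1]$, the fundamental theorem of calculus yields $\int_0^1|g'|\,\d u=g(1)-g(0)=1$, and each tail contributes $1$, for a total of $2$. For item (\ref{item:5}), the chain rule gives $|h''_{\alpha,\beta,\tau}(x)|=|g''(u)|/\tau^{2}$ on each tail, so
\begin{displaymath}
\int_{\alpha-\tau}^{\alpha}|h''_{\alpha,\beta,\tau}|\,\d x
=\frac{1}{\tau}\int_{0}^{1}|g''(u)|\,\d u.
\end{displaymath}
Since $g''(u)=6-12u$ changes sign at $u=1/2$, I would split the integral and obtain $\int_{0}^{1/2}(6-12u)\,\d u+\int_{1/2}^{1}(12u-6)\,\d u=\tfrac{3}{2}+\tfrac{3}{2}=3$. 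Each of the two tails therefore contributes $3/\tau$, for a total of $6/\tau$.

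There is no real obstacle here: everything reduces to elementary calculus with the explicit cubic $g$. The only place where one must be a bit careful is the $\mathcal{C}^{1}$ gluing in (\ref{item:1}), which depends on the particular choice of $g$; the fact that $g''(0)\ne 0$ and $g''(1)\ne 0$ explains why one does not obtain $\mathcal{C}^{2}$ regularity, and is also responsible for the nonzero total variation $6/\tau$ of $h'_{\alpha,\beta,\tau}$ appearing in (\ref{item:5}).
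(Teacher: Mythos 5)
Your proof is correct and follows essentially the same route as the paper: the paper's own argument simply records the properties of the cubic $g$ (values and vanishing derivatives at $0$ and $1$, and the integrals of $g$, $|g'|$, $|g''|$ over $[0,1]$) and notes that the lemma follows from these via the definition of $h_{\alpha,\beta,\tau}$, which is exactly the verification you carry out in detail with the change of variables on each tail. The only quibble is cosmetic: the jump of $h''$ at the gluing points explains the failure of $\mathcal{C}^{2}$ regularity, but the total variation $6/\tau$ of $h'$ is just $\int|h''|$ and is not specifically "caused" by $g''(0),g''(1)\neq 0$.
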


\begin{proof}
By a direct calculation, we verify that the function $g$ satisfies the following properties:
\begin{itemize}
\item $g(x)\geq0$ for all $x\in[0,1]$;
 \item $g(0)=g'(0)=0,\ g(1)=1,\,g'(1)=0$;
\item $\int_0^1g\,dx=\frac12$;
\item $\int_0^1|g'|\,dx=1$;
\item $\int_0^1|g''|\,dx=3.$
\end{itemize}
The claim follows easily from these properties and the definition of
$h_{\alpha,\beta,\tau}$.
\end{proof}

Suppose furthermore that $\beta-\alpha+2\tau<2\pi.$ The support of
$h_{\alpha,\beta,\tau}$ is then contained in an interval of length
bounded by $2\pi$, and so this function can be regarded as a function
on $\R/2\pi\Z$. For $a\in \Z$ set
$c_{a}=\frac{1}{2\pi}\int_{-\pi}^{\pi}h_{\alpha,\beta,\tau}(x)\e^{-\i
  ax}\d x$, so that its Fourier series is given by
$$\sum_{a\in\Z}c_{a}\e^{\i ax}.
$$

\begin{lemma}\label{acotacionFourier} Let
  $\alpha\le \beta$ and $\tau>0$ such that
  $\beta-\alpha+2\tau<2\pi$. Then $\sum_{a\in\Z}c_{a}\e^{\i ax}$
  converges absolutely and uniformly on $\R/2\pi\Z$ to
  $h_{\alpha,\beta,\tau}$.  Moreover,
  $c_0=\frac{\beta-\alpha+\tau}{2\pi}, $ and, for $a\neq0$,
\begin{equation}\label{eq:7}
|c_a|\le \min \Big\{ \frac{1}{\pi\,a}, \frac{3}{\pi\,\tau\,a^2}\Big\}.  
\end{equation}

\end{lemma}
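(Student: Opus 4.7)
The plan is to compute $c_0$ directly and then establish the two bounds on $|c_a|$ for $a\neq0$ via integration by parts (once for the first bound, twice for the second), after which absolute and uniform convergence follow from a $1/a^2$ estimate and standard Fourier theory.

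First I would compute $c_{0}$: by definition and Lemma \ref{univariate}(\ref{item:3}),
\begin{displaymath}
c_{0}=\frac{1}{2\pi}\int_{-\pi}^{\pi}h_{\alpha,\beta,\tau}(x)\,\d x
=\frac{\beta-\alpha+\tau}{2\pi}.
\end{displaymath}
Here the hypothesis $\beta-\alpha+2\tau<2\pi$ guarantees that the support of $h_{\alpha,\beta,\tau}$, translated if necessary by a multiple of $2\pi$, lies in an open interval of length less than $2\pi$ contained in $(-\pi,\pi)$, so that the integrals over $\R$ and over $[-\pi,\pi]$ agree.

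Next, for $a\ne0$, I would integrate by parts once. Since $h_{\alpha,\beta,\tau}$ vanishes identically outside $[\alpha-\tau,\beta+\tau]$, the boundary terms are zero and
\begin{displaymath}
c_{a}=\frac{1}{2\pi}\int_{-\pi}^{\pi}h_{\alpha,\beta,\tau}(x)\,\e^{-\i a x}\,\d x
=\frac{1}{2\pi\i a}\int_{-\pi}^{\pi}h'_{\alpha,\beta,\tau}(x)\,\e^{-\i a x}\,\d x.
\end{displaymath}
Taking absolute values and invoking Lemma \ref{univariate}(\ref{item:4}) yields $|c_{a}|\le\frac{1}{2\pi|a|}\cdot 2=\frac{1}{\pi|a|}$, which is the first half of \eqref{eq:7}. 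For the second half, I observe that by Lemma \ref{univariate}(\ref{item:1}) the function $h'_{\alpha,\beta,\tau}$ is continuous on $\R$, vanishes off $[\alpha-\tau,\beta+\tau]$, and is piecewise polynomial (in fact $h''_{\alpha,\beta,\tau}$ exists everywhere except possibly at the four nodes $\alpha-\tau,\alpha,\beta,\beta+\tau$ and is bounded). A second integration by parts is therefore legitimate and gives
\begin{displaymath}
c_{a}=-\frac{1}{2\pi a^{2}}\int_{-\pi}^{\pi}h''_{\alpha,\beta,\tau}(x)\,\e^{-\i a x}\,\d x,
\end{displaymath}
so that Lemma \ref{univariate}(\ref{item:5}) gives $|c_{a}|\le\frac{1}{2\pi a^{2}}\cdot\frac{6}{\tau}=\frac{3}{\pi\tau a^{2}}$. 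Taking the minimum of the two estimates yields \eqref{eq:7}.

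Finally, the bound $|c_{a}|\le\frac{3}{\pi\tau a^{2}}$ implies $\sum_{a\in\Z}|c_{a}|<\infty$, so the Fourier series $\sum_{a}c_{a}\e^{\i a x}$ converges absolutely and uniformly on $\R/2\pi\Z$ to a continuous function. Since $h_{\alpha,\beta,\tau}$ is itself continuous on $\R/2\pi\Z$ (by Lemma \ref{univariate}(\ref{item:1})) and the Fourier series of a continuous function converges to it in $L^{2}$, the uniform limit must coincide with $h_{\alpha,\beta,\tau}$. No step poses a serious difficulty; the only point requiring care is justifying the second integration by parts despite $h_{\alpha,\beta,\tau}$ being only $\mathcal{C}^{1}$, which is handled by the piecewise-polynomial structure and the explicit $L^{1}$ bound on $h''_{\alpha,\beta,\tau}$ supplied by Lemma \ref{univariate}(\ref{item:5}).
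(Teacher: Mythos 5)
Your proof is correct and follows essentially the same route as the paper: compute $c_0$ from Lemma \ref{univariate}(\ref{item:3}), obtain both bounds in \eqref{eq:7} by one and two integrations by parts using Lemma \ref{univariate}(\ref{item:4}-\ref{item:5}), and conclude absolute and uniform convergence (the paper quotes this directly from the $\mathcal{C}^{1}$ regularity, while you derive it from the $1/a^{2}$ bound plus the $L^{2}$ identification, which is a harmless, slightly more self-contained variant).
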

\begin{proof}
  Lemma \ref{univariate}\eqref{item:1} implies that the series
  $\sum_{a\in\Z}c_{a}\e^{\i ax}$ converges absolutely and uniformly on
  $\R/2\pi\Z$ to the function $h_{\alpha,\beta,\tau}$. The computation
  of $c_0$ follows from Lemma
  \ref{univariate}\eqref{item:3}. Integrating by parts, we deduce for
  $a\in\Z\setminus \{0\}$ that
$$
c_a=\frac{1}{-2\pi
  \i a}\int_{-\pi}^{\pi}h'_{\alpha,\beta,\tau}(x)\e^{-\i
  ax}dx=\frac{1}{2\pi(-\i a)^2}\int_{-\pi}^{\pi}h''_{\alpha,\beta,\tau}(x)\e^{-\i
  ax}dx.
$$
Hence, $|c_a|\le \frac{1}{2\pi
  a}\int_{-\pi}^{\pi}|h'_{\alpha,\beta,\tau}|dx$ and also $|c_a|\le
\frac{1}{2\pi a^2}\int_{-\pi}^{\pi}|h''_{\alpha,\beta,\tau}|dx$. Then \eqref{eq:7}
follows by bounding these integrals with Lemma
\ref{univariate}(\ref{item:4}-\ref{item:5}).
\end{proof}

Next, we apply Fourier analysis to control the angle discrepancy of $Z$
in terms of the angle discrepancy of its direct image under monomial
projections.

\begin{lemma}\label{est33}
  Let $n\ge 2,\, q \in \Z_{\ge 1}$, $\bfalpha=(\alpha_{1},\dots, \alpha_{n})$ and
  $\bfbeta=(\beta_{1},\dots, \beta_{n})$ with $\alpha_{j},\beta_{j}\in
  \R$ such that $-\pi\leq\alpha_j<\beta_j<\pi$ and
  $\beta_{j}-\alpha_{j}+\frac{2}{q }< 2\pi$.  Then
\begin{displaymath}
  \left|\frac{\deg(Z_{\bfalpha,\bfbeta})}{\deg(Z)}-\prod_{j=1}^n\frac{\beta_j-\alpha_j}{2\pi}\right|
  \le 2\, n\, \theta(Z) +\frac{3\, n}{2\, \pi\, q } 
+n\frac{2^{n+3}\sqrt{3}}{\pi^{n-1}}
  q ^{\frac12}(9+
  \log({q }))^{n-1}  \, \theta(Z).
\end{displaymath}
\end{lemma}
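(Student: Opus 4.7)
The approach is Fourier-analytic tomography: approximate the characteristic function of the box $(\bfalpha,\bfbeta]$ by a smooth tensor-product bump, expand the bump in a multidimensional Fourier series on the torus $(\R/2\pi\Z)^n$, control each Fourier mode via Lemma \ref{lemm:1}, and sum using the decay of the Fourier coefficients from Lemma \ref{acotacionFourier}.

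Concretely, I would set $H(\bftheta):=\prod_{j=1}^{n} h_{\alpha_j,\beta_j,1/q}(\theta_j)$. Since $H\equiv 1$ on $[\bfalpha,\bfbeta]$, takes values in $[0,1]$, and is supported in $[\bfalpha-1/q,\bfbeta+1/q]$, the sandwich
$$\sum_\bfxi m_\bfxi H(\arg\bfxi)-M\;\le\;\deg(Z_{\bfalpha,\bfbeta})\;\le\;\sum_\bfxi m_\bfxi H(\arg\bfxi)$$
holds, where $M$ is the mass of $Z$ in the margin $[\bfalpha-1/q,\bfbeta+1/q]\setminus(\bfalpha,\bfbeta]$. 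The margin is covered by $2n$ coordinate slabs of width $1/q$; bounding the mass of each by the angle discrepancy of the projection $\chi^{\bfe_j}_*(Z)$ (which is at most $\theta(Z)$ since $\|\bfe_j\|_2=1$), one obtains $M/\deg(Z)\le n/(\pi q)+2n\theta(Z)$, which will account for the first two terms of the claimed bound.

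Next, by Lemma \ref{acotacionFourier}, $H$ expands absolutely as $H(\bftheta)=\sum_{\bfa}\widehat H(\bfa)\,\e^{\i\langle\bfa,\bftheta\rangle}$ with $\widehat H(\bfa)=\prod_{j} c_{a_j}^{(j)}$. Substituting into $\frac{1}{\deg(Z)}\sum_\bfxi m_\bfxi H(\arg\bfxi)$ gives a zero-frequency term $\prod_j c_0^{(j)}=\prod_j(\beta_j-\alpha_j+1/q)/(2\pi)$, which differs from $\prod_j(\beta_j-\alpha_j)/(2\pi)$ by at most $n/(2\pi q)$ (telescoping, using that each factor is $\le 1$), plus a Fourier tail whose modulus, by Lemma \ref{lemm:1} and the definition of $\theta(Z)$, is bounded by $2\pi\theta(Z)\cdot\Sigma$, where $\Sigma:=\sum_{\bfa\ne\bfzero}\|\bfa\|_2^{1/2}\prod_j|c_{a_j}^{(j)}|$.

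The heart of the argument is the estimation of $\Sigma$. Using the concavity inequality $\|\bfa\|_2^{1/2}=(\sum a_j^2)^{1/4}\le\sum_j|a_j|^{1/2}$ and the product structure,
$$\Sigma\;\le\;\sum_{k=1}^{n}\Big(\sum_{a\ne 0}|a|^{1/2}|c_a^{(k)}|\Big)\prod_{j\ne k}\Big(\sum_{a\in\Z}|c_a^{(j)}|\Big)\;\le\;n\,B\,(A+c_0)^{n-1},$$
with $A=\sup_j\sum_{a\ne 0}|c_a^{(j)}|$ and $B=\sup_j\sum_{a\ne 0}|a|^{1/2}|c_a^{(j)}|$. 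The coefficient bound $|c_a|\le\min(1/(\pi|a|),3q/(\pi a^2))$ from Lemma \ref{acotacionFourier}, split at the crossover $|a|=3q$ and combined with standard integral comparisons, gives $A\le(2/\pi)(2+\log 3q)$ and $B\le(8\sqrt 3/\pi)\,q^{1/2}$, whence $A+c_0\le(2/\pi)(9+\log q)$ with room to spare. Plugging these into $2\pi\theta(Z)\cdot n B(A+c_0)^{n-1}$ produces exactly the third term $n\cdot\frac{2^{n+3}\sqrt{3}}{\pi^{n-1}}q^{1/2}(9+\log q)^{n-1}\theta(Z)$, and combining with the margin contribution finishes the proof. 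The main obstacle is the combinatorial factorization: one must cleanly isolate a single $|a_k|^{1/2}$ factor per summand so that the remaining $n-1$ coordinates contribute a clean $(A+c_0)^{n-1}$, producing only a linear $n$ pre-factor rather than an exponential blow-up in the dimension.
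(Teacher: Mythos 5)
Your proposal is correct and follows essentially the same route as the paper's proof: the same tensor-product mollifier $h_{\alpha_j,\beta_j,1/q}$, the same three-way split (margin mass, zero Fourier mode shift, nonzero Fourier tail controlled by Lemma \ref{lemm:1}), and the same device of bounding $\|\bfa\|_2^{1/2}$ by $\sum_j |a_j|^{1/2}$ so that each summand carries one ``heavy'' coordinate factor $B$ and $n-1$ ``light'' factors, yielding the identical constants. The only cosmetic difference is that you keep two-sided coefficient sums ($A$, $B$) while the paper restricts to the nonnegative orthant at the cost of an explicit $2^{n}$ factor; these bookkeeping choices are numerically equivalent.
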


\begin{proof}
  Set $\tau= \frac{1}{q }$ and 
  $h_{\bfalpha,\bfbeta,\tau}(\bfnu)=\prod_{j=1}^nh_{\alpha_j,\beta_j,\tau}(\nu_j)$
  for $\bfnu=(\nu_{1},\dots, \nu_{n})\in \R^{n}$. Set also $D=\deg(Z)$ and
\begin{align*}
  \Sigma_1&=\left|\frac{\deg(Z_{\bfalpha,\bfbeta})}{D}-\frac{1}{D}\sum_{\bfxi}m_{\bfxi}h_{\bfalpha,\bfbeta,\tau}(\arg(\bfxi))\right|,\\   
\Sigma_2&=\left|\frac{1}{D}\sum_{\bfxi}m_{\bfxi}h_{\bfalpha,\bfbeta,\tau}(\arg(\bfxi))-\prod_{j=1}^n\frac{\beta_j-\alpha_j+\tau }{2\pi}\right|,\\ 
\Sigma_3&=\left|\prod_{j=1}^n\frac{\beta_j-\alpha_j+\tau }{2\pi}
  - \prod_{j=1}^n\frac{\beta_j-\alpha_j}{2\pi}\right|. 
\end{align*}
We will bound each of these quantities.  For $\Sigma_{1}$,
we consider the subset of $\R^{n}$ given by
 $I_{\bfalpha,\bfbeta,\tau}=\prod_{j=1}^n[\alpha_{j}-\tau,\beta_{j}+\tau]\setminus\prod_{j=1}^n
[\alpha_{j},\beta_{j}]$. Then
$$
\Sigma_1=\Bigg|\frac{1}{D}\sum_{\arg(\bfxi)\in I_{\bfalpha,\bfbeta,\tau}} m_{\bfxi}
h_{\bfalpha,\bfbeta,\tau}(\arg(\bfxi))\Bigg|.
$$
For each $\bfxi$ such that $\arg(\bfxi) \in I_{\bfalpha,\bfbeta,\tau}$, there is $1\leq j\leq n$ such that either  $\alpha_j-\tau <
\arg(\xi_{j})\le \alpha_{j}$ or
$\beta_{j}<\arg(\xi_{j})<\beta_{j}+\tau.$
Since $0\leq h_{\bfalpha,\bfbeta,\tau}(\bfnu)\leq 1$ for all
$\bfnu,$ we have that~$\Sigma_{1}$ is bounded from above by
$$
\frac{1}{D}\sum_{j=1}^n\big(\deg(\chi^{\bfe_{j}}_{*}(Z)_{\alpha_{j}-\tau ,\alpha_{j}})+
  \deg(\chi^{\bfe_{j}}_{*}(Z)_{\beta_{j}, \beta_{j}+\tau })\big).
$$
By using the definition of $\Deltaa$ and Lemma \ref{lemm:1}, we get
\begin{displaymath}
  \frac{1}{D}\deg(\chi^{\bfe_{j}}_{*}(Z)_{\alpha_{j}-\tau ,\alpha_{j}})\le
  \Deltaa(\chi^{\bfe_{j}}_{*}(Z))+ \frac{\tau }{2\pi}\le
  \theta(Z)+ \frac{\tau }{2\pi}=\theta(Z)+ \frac{1}{2\pi q },
\end{displaymath}
and a similar bound holds for $\deg(\chi^{\bfe_{j}}_{*}(Z)_{\beta_{j},
  \beta_{j}+\tau })$. Hence, 
\begin{equation}\label{s1}
\Sigma_1\leq 2n\theta(Z) +\frac{n}{\pi q }.
\end{equation}

Now we turn to $\Sigma_{2}$. Due to the conditions imposed on $\tau$, we can
regard~$h_{\bfalpha,\bfbeta,\tau}$ as a function on~$\R^{n}/2\pi
\Z^{n}\simeq (-\pi,\pi]^{n}$. Let $\sum_{\bfa\in\Z^n}c_\bfa
\e^{\i\langle\bfa,\bfnu\rangle}$ be its multivariate Fourier
series.  For $j=1,\ldots, n,$ we denote with
$\sum_{a_{j}\in\Z}c_{j,a_{j}}\e^{\i a_{j}\nu_j}$ the Fourier series
of $h_{\alpha_j,\beta_j,\tau }$.  Then, for each $\bfa=(a_1,\ldots,a_n)\in \Z^{n},$
$$c_\bfa=\prod_{j=1}^{n}c_{j,a_j}.
$$
In particular, $c_{\bfzero}=\prod_{j=1}^n\frac{\beta_j-\alpha_j+\tau }{2\pi}$.
The Fourier series of each $h_{\alpha_j,\beta_j,\tau }$ converges
absolutely to this function, and so the same holds for the Fourier series of
$h_{\bfalpha,\bfbeta,\tau}$. Hence, 
$$h_{\bfalpha,\bfbeta,\tau}(\bfnu)
=\sum_{\bfa\in\Z^n}c_\bfa \e^{\i\langle\bfa,\bfnu\rangle}
$$
for $\bfnu\in(-\pi,\pi]^{n}$. 
Applying Lemma \ref{lemm:1}, we obtain 
  \begin{multline} \label{cota1}
 \Sigma_2=\bigg|\frac{1}{D}\sum_{\bfxi}m_{\bfxi}\sum_{\bfa\neq{\bfzero}}c_\bfa \e^{\i\langle\bfa,\arg(\bfxi)\rangle}\bigg| 
 \leq\sum_{\bfa\neq0}|c_\bfa|\bigg|\frac{1}{D}\sum_{\bfxi}m_{\bfxi}\e^{\i\langle\bfa,\arg(\bfxi)\rangle}\bigg|\\
 \leq  2\pi\,\theta(Z) \sum_{\bfa\neq0}|c_\bfa|{\|\bfa\|_2^{\frac12}}
\leq 2^{n+1}\pi\,\theta(Z)\sum_{\bfa\geq{\bfzero}}\bigg(\sum_{s=1}^{n} \sqrt{a_s}\bigg)|c_\bfa|
  \end{multline}
For $s=1$, 
\begin{align*}
\sum_{\bfa\geq{\bfzero}}\sqrt{a_1}|c_\bfa|
=\bigg(\sum_{a_1\geq0}\sqrt{a_1}|c_{1,a_1}|\bigg)\prod_{j=2}^n \bigg(\sum_{a_j\geq0}|c_{j,a_j}|\bigg).
\end{align*}
Using the bounds in~\eqref{eq:7} we get, for $j=2,\ldots,n,$
\begin{align}\label{mijack}
\nonumber \sum_{a_j\geq0}|c_{j,a_j}|&\leq|c_{j,0}|+\sum_{a_j=1}^{3q }
\frac{1}{a_{j}\pi}
+\sum_{a_j=3q +1}^\infty\frac{3}{\pi\tau  a_{j}^2}\\
\nonumber &\leq 
\frac{\beta_{j}-\alpha_{j}+\tau}{2\pi}+ \bigg(1+ \frac{1}{\pi}
\int_{1}^{3q }\frac{dx}{x} \bigg) +
\frac{3}{\pi\tau} \int_{3q }^{\infty}\frac{dx}{x^{2}}
\\
\nonumber &\leq \frac{2\pi}{2\pi}+
1+\frac{1}{\pi}\log\Big(\frac{3}{\tau }\Big)+\frac{3}{\pi\tau } \frac{\tau}{3}\\
&
\leq\frac{1}{\pi}\big(9+\log q \big).
\end{align}
Similarly, we now bound
\begin{align}\label{roboc}
  \nonumber
  \sum_{a_1\geq0}\sqrt{a_1}|c_{1,a_1}|&\leq\sum_{a_1=1}^{3q }\frac{1}{\pi\sqrt{a_1}}
  +\sum_{a_1=3q +1}^\infty\frac{3}{\pi\tau\, a_{1}\sqrt{a_1}}\\
  \nonumber &\le \frac{1}{\pi} \int_{0}^{3q }{x^{-\frac12}}{dx} +
  \frac{3}{\pi\tau} \int_{3q }^{\infty}{x^{-\frac32}}{dx}
  \\
  \nonumber &\leq
  \frac{2}{\pi}\bigg(\frac{\tau}{3}\bigg)^{-\frac12}+\frac{6}{\pi\tau}\bigg(\frac{\tau}{3}\bigg)^{\frac12}\\
  &\le  \frac{4\sqrt{3}}{\pi}{q }^{\frac12}.
\end{align}
It follows from   \eqref{cota1}, \eqref{mijack} and \eqref{roboc} that
\begin{equation}\label{s2}
\Sigma_2\leq n\frac{2^{n+3}\sqrt{3}}{\pi^{n-1}}
  q  ^{\frac12}(9+
  \log({q }))^{n-1}  \, \theta(Z). 
\end{equation}

Next we consider $\Sigma_{3}$.  Set
$\phi(t)=\prod_{j=1}^n\frac{\beta_{j}-\alpha_{j}+\tau t}{2\pi}$ for
$t\in \R$. There exists $0<t_{0}<1$ such that
$\Sigma_{3}=|\phi(1)-\phi(0)| = |\phi'(t_{0})|$ and so
\begin{displaymath}
  \Sigma_{3}\le \sup_{0<t_{0}<1}|\phi'(t_{0})| \le
  \sum_{j=1}^{n}\frac{\tau }{2\pi}\prod_{\ell\neq
    j}\frac{\beta_{\ell}-\alpha_{\ell}+\tau}{2\pi}
\le {n\tau}\frac{(2\pi)^{n-1}}{(2\pi)^{n}}=\frac{n}{2\pi q }.
\end{displaymath}

Finally, we collect (\ref{s1}), (\ref{s2}) and the above inequality to get
\begin{align*}
  \bigg|\frac{\deg(Z_{\bfalpha,\bfbeta})}{\deg(Z)}-\prod_{j=1}^n\frac{\beta_j-\alpha_j}{2\pi}\bigg|
 \leq &  \Sigma_1+\Sigma_2+\Sigma_3\\
\le & 2n\theta(Z) +\frac{n}{\pi q } 
+n\frac{2^{n+3}\sqrt{3}}{\pi^{n-1}}
  q  ^{\frac12}(9+
  \log({q }))^{n-1}  \, \theta(Z) 
+ \frac{n}{2\pi q }\\
= & 2n\theta(Z) +\frac32\frac{n}{\pi q } 
+n\frac{2^{n+3}\sqrt{3}}{\pi^{n-1}}
  q ^{\frac12}(9+
  \log({q }))^{n-1}  \, \theta(Z).
\end{align*}
\end{proof}

\begin{proof}[Proof of Theorem \ref{thm:2}]
For the radius discrepancy, we have
$$
|Z_{\varepsilon}|
= \bigcap_{j=1}^n
 \Big\{\bfxi \in |Z|\mid 1-\varepsilon < |\xi_j| < \frac{1}{1-\varepsilon} \Big\}.
$$
By taking complements in this equality and considering the corresponding
multiplicities, we deduce that
\begin{displaymath}
  \deg(Z) - \deg(Z_{\varepsilon})\le
  \sum_{j=1}^{n}\deg(\chi^{\bfe_{j}}_{*}(Z))- \deg(\chi^{\bfe_{j}}_{*}(Z)_{\varepsilon}). 
\end{displaymath}
Hence, 
\begin{math}
 \Deltar(Z,\varepsilon) \le\sum_{j=1}^n
\Deltar(\chi^{\bfe_{j}}_{*}(Z,\varepsilon)\big)= \rho(Z, \varepsilon),
\end{math}
as stated.

We now consider the bound for the angle discrepancy. For $n=1$,
$\Deltaa(Z)\le \theta(Z)$  and so the
bound in the claim is trivial. Hence, we suppose that $n\geq2$. 
Put then   $\zeta(Z)=
(9 -\log(\theta(Z)))^{\frac23(n-1)}\, \theta(Z)^{2/3}\in \R_{>0}$ for
short  and set 
\begin{displaymath}
  q =\left\lfloor \frac{9 ^{\frac23(n-1)}}{\zeta(Z)}\right\rfloor .
\end{displaymath}
Suppose also that $q \ge 1$. Then
\begin{equation}\label{eq:12}
\frac{9 ^{\frac23(n-1)}}{2\zeta(Z)}<  q  \leq\frac{9 ^{\frac23(n-1)}}{\zeta(Z)}\le\frac{1}{\theta(Z)}.
\end{equation}

Let $\bfalpha=(\alpha_{1},\dots, \alpha_{n})$ and
$\bfbeta=(\beta_{1},\dots, \beta_{n})$ with $-\pi\le
\alpha_{j}<\beta_{j}\le \pi$. Consider first the case where
$\beta_j-\alpha_j\leq\pi$.  In particular,
$\beta_j-\alpha_j+\frac{2}{q }<2\pi$.  Applying Lemma \ref{est33}
and the inequalities \eqref{eq:12}, we deduce that the quantity
$\Big|\frac{\deg(Z_{\bfalpha,\bfbeta})}{\deg(Z)}
-\prod_{j=1}^n\frac{\beta_j-\alpha_j}{2\pi}\Big|$ is bounded from above by
\begin{displaymath}
2n\frac{\zeta(Z)}{9 ^{\frac23(n-1)}} 
+\frac{3\, n}{2\, \pi}\frac{2\, \zeta(Z)}{9 ^{\frac23(n-1)}} 
+n\frac{2^{n+3}\sqrt{3}}{\pi^{n-1}}
  \bigg(\frac{9 ^{\frac23(n-1)}}{\zeta(Z)}\bigg)^{\frac12}(9 -
  \log(\theta(Z)))^{n-1}  \, \theta(Z)  .
\end{displaymath}
Since $n\ge2$, this quantity can be bounded by
\begin{multline*}
n \bigg(\frac{2}{9 ^{\frac23(n-1)}} +\frac{3}{9 ^{\frac23(n-1)}\pi}+\frac{2^{n+3}9 ^{\frac13(n-1)}\sqrt{3}}{\pi^{n-1}}\bigg)
 \zeta(Z)  \\
\le n\bigg(1+\frac{2^{3}\sqrt{3}\pi}{9 ^{\frac13}}\bigg(\frac{2\cdot9 ^{\frac13}}{\pi}\bigg)^{n} \bigg) \zeta(Z) 
\leq 22\,n\left(\frac43\right)^n\zeta(Z),
\end{multline*}
as it can be easily verified that
$\frac{2\cdot9 ^{\frac13}}{\pi}<\frac43$ and
$\frac{2^{3}\sqrt{3}\pi}{9 ^{\frac13}}<21$. 

If $q =0$, then $\frac{9 ^{\frac23(n-1)}}{\zeta(Z)} <1$, which implies
that $ \Deltaa(Z)\le 1\le \zeta(Z)$.  Hence, in the case where $-\pi\le
\alpha_{j}<\beta_{j}\le \pi$ for all $j$, we have
\begin{equation} \label{eq:15}
\Big|\frac{\deg(Z_{\bfalpha,\bfbeta})}{\deg(Z)}
-\prod_{j=1}^n\frac{\beta_j-\alpha_j}{2\pi}\Big|\le 22\,n\left(\frac43\right)^n\zeta(Z). 
\end{equation}

Now, if $\beta_j-\alpha_j>\pi$ for some $j$, we subdivide each of those
intervals $(\alpha_{j},\beta_{j}]$ into two subintervals of length
$\leq \pi$.  We can then decompose $Z_{\bfalpha,\bfbeta}$ as the sum
of at most $2^n$ cycles of the form
$Z_{\bfnu_{0,\sigma},\bfnu_{1,\sigma}}$ where the $j$th coordinate of
$\bfnu_{0,\sigma}$ (respectively $\bfnu_{1,\sigma}$) is either
$\alpha_{j}$ (respectively $\frac{\alpha_{j}+\beta_{j}}{2}$) or
$\frac{\alpha_{j}+\beta_{j}}{2}$ (respectively $\beta_{j}$).
Also, we can expand the product
$$\prod_{j=1}^n\frac{\beta_{j}-\alpha_{j}}{2\pi}
$$
as the sum of the volumes of the sets
$\prod_{j=1}^{m}(\frac{\nu_{0,\sigma,j}}{2\pi},\frac{\bfnu_{1,\sigma,j}}{2\pi}]$.
From here, we easily get that
$$
 \left|\frac{\deg(Z_{\bfalpha,\bfbeta})}{\deg(Z)}-\prod_{j=1}^n\frac{\beta_j-\alpha_j}{2\pi}\right|
  \le \sum_{\sigma}
  \left|\frac{\deg(Z_{\bfnu_{0,\sigma},\bfnu_{1,\sigma}})}{\deg(Z)}-\prod_{j=1}^n\frac{\nu_{1,\sigma,j}-\nu_{0,\sigma,j}}{2\pi}\right|.
$$
The claim follows applying the bound \eqref{eq:15}, which has  to be multiplied by $2^n$. Altogether, we get
$$
\Deltaa(Z)\le 22\,n\bigg(\frac83\bigg)^{n} \zeta(Z),$$
which concludes the proof.
\end{proof}

\section{Bounds for the discrepancy in terms of sparse
  resultants} \label{sec:bounds-discr-terms}

In this section, we consider cycles defined by a system of Laurent
polynomials with given support. We compute their direct image under
monomial projections, in terms of sparse resultants, and we derive Theorem
\ref{thm:1} from the Erd\"os-Tur\'an's
theorem and the results in the previous section. We also establish some
basic properties of the Erd\"os-Tur\'an size. 

\medskip 
We first recall the definition of the sparse
resultant following~\cite{DS:srmet}. 
Let $\cA_{0},\dots,
\cA_{n}$ be a family of $n+1$ non-empty finite subsets of $\Z^{n}$ and put
$\bfcA=(\cA_{0}, \dots, \cA_{n})$. Let $\bfu_{i}=\{u_{i,\bfa}\}_{ a\in
  \cA_{i}}$ be a group of $\# \cA_{i}$ variables, $i=0,\dots,n$, and
set $\bfu=\{\bfu_{0},\dots, \bfu_{n}\}$. For each $i$, let $F_i$ be
the general polynomial with support $\cA_{i}$, that is
\begin{equation}\label{Fi}
  F_{i}=\sum_{\bfa\in \cA_{i}}u_{i,\bfa} \bfx^{\bfa}\in
  \C[\bfu][x_{1}^{\pm1},\dots, x_{n}^{\pm1}],
\end{equation}
and consider the incidence variety
\begin{displaymath}
  W_{\bfcA}=\Big\{(\bfx,\bfu) \in (\C^{\times})^{n}\times
  \prod_{i=0}^{n}\P(\C^{\cA_{i}}) \, \Big|\ 
  F_{i}(\bfu_{i},\bfx)=0\Big\}.
\end{displaymath}
The
direct image of $W_{\bfcA}$ under the projection  \begin{math}
\pi\colon (\C^{\times})^{n}\times \prod_{i=0}^{n}\P(\C^{\cA_{i}})\rightarrow
\prod_{i=0}^{n}\P(\C^{\cA_{i}})
\end{math}
is the Weil divisor of
$\prod_{i=0}^{n}\P(\C^{\cA_{i}})$ given by
\begin{displaymath}
  \pi_{*}(W_{\cA})=
  \begin{cases}
    \deg(\pi|_{W_{\bfcA}}) \ov{\pi(W_{\bfcA})} & \text{ if } \codim(\ov{\pi(W_{\bfcA})}) =1,\\
    0  & \text{ if } \codim(\ov{\pi(W_{\bfcA})}) \ge 2.
  \end{cases}
\end{displaymath}
The \emph{sparse resultant} associated to $\bfcA$, denoted
$\Res_{\bfcA}$, is defined as any primitive equation in $\Z[\bfu]$ of
this Weil divisor. It is well-defined up to a sign. 

According to this definition, sparse resultants are not necessarily
irreducible. If we denote by $\Elim_{\bfcA}$ what
is  classically called the sparse resultant \cite{GKZ94,CLO05,PS93}, then $\Res_{\bfcA}\ne1$ if
and only if $\Elim_{\bfcA}\ne1$ and, if this is the case,
$$
\Res_{\bfcA}=\pm\Elim_{\bfcA}^{\deg(\pi|_{W_{\bfcA}})}.
$$
For instance, for $\cA_{0}=\{0\}, \cA_{1}=\{0,1,2\}\subset \Z$, we
have that
\begin{displaymath}
  \Res_{\bfcA}= \pm
u_{0,0}^{2}, \quad \Elim_{\bfcA}= \pm u_{0,0},
\end{displaymath}
see \cite[Example 3.14]{DS:srmet}.

To recall the basic properties of the sparse resultant that we will need in the
sequel, we need to introduce some definitions. 
Let $H$ be a linear subspace of $\R^{n}$ of dimension $m$ and $P_{i}$,
$i=1,\dots, m$, convex bodies of $H$. The \emph{mixed volume} of these
convex bodies is defined as
 \begin{equation*}
   \MV_{H}(P_{1},\dots,P_{m})=\sum_{j=1}^{m}(-1)^{m-j}\sum_{1\le
     i_{1}<\dots<i_{j}\le m}\vol_{H}(P_{i_{1}}+\dots+P_{i_{j}}),
 \end{equation*}
 where $\vol_{H}$ denotes the Euclidean volume of $H$. We refer to
 \cite{CLO05} for further background on the mixed volume of convex
 bodies.

Write $\C[\bfx^{\pm1}]=\C[x_{1}^{\pm1},\dots, x_{n}^{\pm1}]$ for
short. The \emph{height} of a Laurent polynomial $f=\sum_{\bfa\in
  \Z^{n}}\alpha_{\bfa}\bfx^{\bfa}\in \C[\bfx^{\pm1}]$ is defined as
\begin{displaymath}
\h(f)= \log \big(\max_{\bfa}|\alpha_{\bfa}|\big).    
\end{displaymath}
Given a finite subset $\cB$ of $\Z^{n}$, we denote by $\conv(\cB)$ its
convex hull, which is a lattice polytope of $\R^{n}$.

\begin{proposition} \label{prop:6} Let $\cA_{0},\dots, \cA_{n}\subset
  \Z^{n}$ be a family of $n+1$ non-empty finite subsets and set
  $Q_{i}=\conv(\cA_{i})$.  Then
\begin{equation*}
  \deg_{\bfu_{i}}(\Res_{\bfcA})= \MV_{\R^{n}}(Q_{0}, \dots,
  Q_{i-1},Q_{i+1},\dots, Q_{n}), \quad i=0,\dots, n,
\end{equation*}
and
\begin{displaymath}
\h(\Res_{\bfcA})\le \sum_{i=0}^{n}\MV_{\R^{n}}(Q_{0}, \dots,
  Q_{i-1},Q_{i+1},\dots, Q_{n}) \log (\#\cA_{i}).
\end{displaymath}
\end{proposition}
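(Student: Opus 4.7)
The plan is to prove the degree and height statements by separate arguments. For the degree, since $\Res_{\bfcA}$ is by definition a primitive equation of the Weil divisor $\pi_{*}(W_{\bfcA})$ on $\prod_{j=0}^{n}\P(\C^{\cA_{j}})$, its partial degree in $\bfu_{i}$ equals $\deg_{\bfu_{i}}(\pi_{*}(W_{\bfcA}))$. By the projection formula in intersection theory, this equals the intersection number
\begin{displaymath}
[W_{\bfcA}] \cdot \prod_{j\ne i}\pi^{*}(H_{j}),
\end{displaymath}
for generic hyperplanes $H_{j}\subset\P(\C^{\cA_{j}})$. Interpreting a generic $H_{j}$ as specifying a generic Laurent polynomial $f_{j}$ with support $\cA_{j}$, this intersection number becomes the number of pairs $(\bfx,\bfu_{i})\in(\C^{\times})^{n}\times\P(\C^{\cA_{i}})$ with $f_{j}(\bfx)=0$ for every $j\ne i$ and $F_{i}(\bfu_{i},\bfx)=0$. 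Since $F_{i}(\cdot,\bfx)=0$ cuts a single hyperplane in $\P(\C^{\cA_{i}})$ for each fixed $\bfx$, the count reduces to the number of toric roots of the generic subsystem $(f_{j})_{j\ne i}$; by Bernstein's theorem this equals $\MV_{\R^{n}}(Q_{0},\ldots,\widehat{Q_{i}},\ldots,Q_{n})$. The factor $\deg(\pi|_{W_{\bfcA}})$ arising from the non-necessarily-irreducible definition of $\Res_{\bfcA}$ is automatically absorbed into the pushforward.

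For the height bound, I would first control the Mahler measure
\begin{displaymath}
m(\Res_{\bfcA})=\int_{\T^{N}}\log|\Res_{\bfcA}|\,d\mu, \qquad N=\sum_{i=0}^{n}\#\cA_{i},
\end{displaymath}
of the sparse resultant on the unit polycircle $\T^{N}$ of the coefficient space. On $\T^{N}$, each coefficient $u_{i,\bfa}$ has modulus one, so the corresponding Laurent polynomial $f_{i}$ satisfies $\|f_{i}\|_{\sup}\le\#\cA_{i}$. The Poisson-type product formula for the sparse resultant, applied iteratively to each block $\bfu_{i}$ and combined with Jensen's formula (used to average $\log|f_{j}(\bfxi)|$, which is a linear form in $\bfu_{j}$, over the polycircle in~$\bfu_{j}$), is designed to yield
\begin{displaymath}
m(\Res_{\bfcA})\le \sum_{i=0}^{n}d_{i}\log(\#\cA_{i}),
\end{displaymath}
with $d_{i}=\MV_{\R^{n}}(Q_{0},\ldots,\widehat{Q_{i}},\ldots,Q_{n})$. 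Since $\Res_{\bfcA}\in\Z[\bfu]$ is multihomogeneous of multi-degree $(d_{0},\ldots,d_{n})$, a multivariate Mahler-type inequality comparing its maximum coefficient to $\exp m(\Res_{\bfcA})$ converts this estimate into the claimed bound $\h(\Res_{\bfcA})\le\sum_{i=0}^{n}d_{i}\log(\#\cA_{i})$.

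The hardest step is the Mahler-measure bound. The Poisson-type product formula must be set up carefully in the possibly reducible setting $\Res_{\bfcA}=\pm\Elim_{\bfcA}^{\deg(\pi|_{W_{\bfcA}})}$ used in this paper, and one has to control the ``junk'' constants appearing at each step of the iterated Jensen averaging, choosing an order of integration so that the logarithmic factors aggregate precisely as $\sum_{i}d_{i}\log(\#\cA_{i})$ without spurious additional terms. Once this sharp Mahler-measure bound is in place, the conversion to a bound on the maximum coefficient of $\Res_{\bfcA}$ is a routine Fourier-analytic or multihomogeneous Mahler-inequality calculation.
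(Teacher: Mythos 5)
Your degree argument is fine, and in fact it does more than the paper, which does not prove this proposition at all: for the partial degrees it cites \cite{GKZ94}, and for the height it cites \cite[Theorem~1]{Som04} together with \cite[Proposition~3.15]{DS:srmet}. Your computation of $\deg_{\bfu_{i}}(\pi_{*}(W_{\bfcA}))$ by intersecting with a generic line in the $i$-th factor times generic points $f_{j}$ in the others, pulling back with the projection formula and counting the toric roots of the subsystem via Bernstein, is the standard route; and you are right that the factor $\deg(\pi|_{W_{\bfcA}})$ is absorbed automatically, which is precisely why the formula holds unconditionally for $\Res_{\bfcA}$ while it fails for $\Elim_{\bfcA}$ in non-essential cases (compare the example $\Res_{\bfcA}=\pm u_{0,0}^{2}$). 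Modulo the usual genericity and properness checks (the ambient $(\C^{\times})^{n}\times\prod_{i}\P(\C^{\cA_{i}})$ is not compact, and one must also treat the degenerate case $\codim(\ov{\pi(W_{\bfcA})})\ge 2$, where one checks all the mixed volumes vanish), this half can be made rigorous.

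The height bound, however, has a genuine gap, on two counts. Write $d_{i}=\MV_{\R^{n}}(Q_{0},\dots,Q_{i-1},Q_{i+1},\dots,Q_{n})$. First, the closing step --- passing from $m(\Res_{\bfcA})\le\sum_{i}d_{i}\log(\#\cA_{i})$ to the same bound for $\h(\Res_{\bfcA})$ --- is not a routine Mahler-type inequality and cannot be lossless: for a polynomial homogeneous of degree $d_{i}$ in a block of $\#\cA_{i}$ variables, the gap $\h-m$ can be of order $\tfrac{d_{i}}{2}\log(\#\cA_{i})$ (take a $d_{i}$-th power of the linear form $\sum_{\bfa}u_{i,\bfa}$, whose height is about $d_{i}\log(\#\cA_{i})$ while its Mahler measure is about $\tfrac{d_{i}}{2}\log(\#\cA_{i})$), i.e.\ of the same order as the bound you are trying to prove; so even granting your Mahler estimate, this route yields the proposition only with a strictly worse constant. (The lossless comparison goes the other way: $\h\le\log\|\cdot\|_{\sup}$ by Cauchy's formula, so what one really wants is a sup-norm estimate on the coefficient polycircle, not a Mahler-measure estimate.) Second, the Mahler estimate itself is only asserted, not derived: in the iterated Poisson--Jensen scheme the roots $\bfxi$ appearing in the factor $\prod_{\bfxi}f_{0}(\bfxi)^{m_{\bfxi}}$ of Proposition~\ref{prop:5} are not on $(S^{1})^{n}$, so averaging over $\bfu_{0}\in\T^{\#\cA_{0}}$ produces terms of the shape $\tfrac12\log\sum_{\bfa\in\cA_{0}}|\bfxi^{\bfa}|^{2}$, which are unbounded and must be cancelled against the directional factors $\Res_{\cA_{1}^{\bfv},\dots,\cA_{n}^{\bfv}}(f_{1}^{\bfv},\dots,f_{n}^{\bfv})^{-h_{\cA_{0}}(\bfv)}$; organizing that cancellation and the induction so that exactly $\sum_{i}d_{i}\log(\#\cA_{i})$ survives is the actual content of Sombra's theorem, which is what the present paper invokes. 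As written, both of the decisive steps of your height argument are programmatic rather than proved.
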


\begin{proof}
  The formula for the partial degrees is classical, see for
  instance~\cite{GKZ94}. The bound for the height is given by \cite[Theorem
  1]{Som04} for the case where the resultant depends on all the groups
  of variables $\bfu_0,\ldots,\,\bfu_n$. The general case can be found
  in \cite[Proposition 3.15]{DS:srmet}.
\end{proof}

For a family of Laurent polynomials $f_i\in \C[\bfx^{\pm1}]$ with
$\supp(f_{i})\subset \cA_i$, $i=0,\dots,n$, we write
\begin{displaymath}
  \Res_{\bfcA}(f_0,\dots, f_{n}) 
\end{displaymath}
for the evaluation of the resultant at their coefficients. The
following is the multiplicativity formula for sparse resultants.

\begin{proposition}\label{additivity}
  Let $0\le i\le n$ and consider a family of non-empty finite subsets
  $\cA_0,\ldots,\cA_n,\cA_{i}'\subset\Z^{n}$. Let $f_j\in
  \C[\bfx^{\pm1}]$, be a Laurent polynomial with support contained in
  $\cA_j$, $j=0,\dots, n$, and $f_i'\in \C[\bfx^{\pm1}]$ a further Laurent
  polynomial with support contained in $\cA'_i$. Then
\begin{multline*}
\Res_{\cA_{0},\dots, \cA_{i}+\cA_{i}', \dots, \cA_{n}}(f_{0},\dots,
f_{i}f_{i}', \dots, f_{n})\\ =\pm \Res_{\cA_{0},\dots, \cA_{i}, \dots, \cA_{n}}(f_{0},\dots,
f_{i}, \dots, f_{n})\Res_{\cA_{0},\dots, \cA_{i}', \dots, \cA_{n}}(f_{0},\dots,
f_{i}', \dots, f_{n})
\end{multline*}
\end{proposition}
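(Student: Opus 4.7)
The strategy is to establish the identity via a Poisson-type product formula for the sparse resultant, combined with a Zariski-density reduction. Both sides of the asserted equality lie in $\Z[\bfu_0, \ldots, \bfu_n, \bfu_i']$ once we think of the coefficients of $f_0, \ldots, f_n, f_i'$ as indeterminates. Hence it suffices to verify the identity up to sign after specializing the coefficients of the $f_j$ with $j \ne i$ to generic values; by Zariski density any such specialization detects any polynomial identity.

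Under this generic specialization, Bernstein's theorem (cited in the introduction) implies that the subsystem $\{f_j = 0 : j \ne i\}$ has exactly $D = \MV_{\R^n}(Q_0, \ldots, \widehat{Q}_i, \ldots, Q_n)$ isolated reduced solutions $\xi_1, \ldots, \xi_D \in (\C^\times)^n$. The crucial input is a Poisson-type formula for the sparse resultant (available in the formalism of \cite{DS:srmet}), which produces a factor $C$ depending only on $\{f_j\}_{j \ne i}$, and an integer $e \ge 1$ (essentially the degree of the projection $\pi|_{W_{\bfcA}}$), such that
$$
\Res_{\cA_0, \ldots, \cA_i, \ldots, \cA_n}(f_0, \ldots, f_i, \ldots, f_n)
= \pm\, C \cdot \bigg(\prod_{k=1}^{D} f_i(\xi_k)\bigg)^{e}.
$$
Applying the analogous formula to the three resultants appearing in the proposition, and using $(f_i f_i')(\xi_k) = f_i(\xi_k) f_i'(\xi_k)$, the desired equality reduces to the assertion that the Poisson factors and exponents match on both sides.

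The consistency of these constants and exponents is controlled by Proposition~\ref{prop:6}: the partial degree of each resultant in $\bfu_j$ for $j \ne i$ is given by a mixed volume of the corresponding Newton polytopes. Multilinearity of the mixed volume in its $i$-th slot yields $\MV_{\R^{n}}(\ldots, Q_i + Q_i', \ldots) = \MV_{\R^{n}}(\ldots, Q_i, \ldots) + \MV_{\R^{n}}(\ldots, Q_i', \ldots)$, matching the degrees of the left-hand and right-hand sides as polynomials in $\bfu_j$ for each $j \ne i$. This combinatorial compatibility forces the Poisson constants to align, and the identity follows up to sign.

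The main obstacle is pinning down the precise form of the Poisson factor $C$ and the exponent $e$ in the sparse setting, and especially verifying that they behave additively under the substitution $\cA_i \rightsquigarrow \cA_i + \cA_i'$. A more geometric alternative, which I would pursue if the direct Poisson approach becomes delicate because $\Res_{\bfcA}$ may be a nontrivial power of $\Elim_{\bfcA}$, is to study the multiplication rational map
$$
\mu\colon \P(\C^{\cA_i}) \times \P(\C^{\cA_i'}) \dashrightarrow \P(\C^{\cA_i + \cA_i'}), \qquad (g, h) \longmapsto g \cdot h,
$$
and observe that the pullback of the incidence variety $W_{\cA_0, \ldots, \cA_i + \cA_i', \ldots, \cA_n}$ under $\operatorname{id} \times \cdots \times \mu \times \cdots \times \operatorname{id}$ decomposes as the union of the incidence varieties for the two smaller supports (since $gh$ vanishes precisely on $V(g) \cup V(h)$). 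Taking direct images under the respective projections and translating back into primitive defining equations of Weil divisors recovers the multiplicativity of $\Res_{\bfcA}$, up to the unavoidable sign ambiguity inherent in the definition.
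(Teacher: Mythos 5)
The paper itself offers no internal argument for this proposition: it is quoted directly from \cite[Proposition 7.1]{PS93} (which has restrictions on the supports) and \cite[Corollary 4.6]{DS:srmet} (general case), so there is nothing to compare your sketch with except the cited sources. Judged on its own terms, your Poisson-formula strategy is the right general idea and is close in spirit to how the general case is actually established, but as written it has a genuine gap exactly at its crux. The Poisson factor is \emph{not} ``a factor $C$ depending only on $\{f_j\}_{j\ne i}$'': in the precise form of the formula (Proposition \ref{prop:5}, transposed from slot $0$ to slot $i$), it is $\prod_{\bfv}\Res_{\{\cA_j^{\bfv}\}_{j\ne i}}\big(\{f_j^{\bfv}\}_{j\ne i}\big)^{-h_{\cA_i}(\bfv)}$, so it depends on the support $\cA_i$ through the exponents $-h_{\cA_i}(\bfv)$. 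Your attempt to force the constants to ``align'' by comparing partial degrees via Proposition \ref{prop:6} cannot succeed: equality of multidegrees never implies equality of the polynomials (or of the constants obtained after specialization). What actually closes the argument is the additivity of support functions under Minkowski sum, $h_{\cA_i+\cA_i'}=h_{\cA_i}+h_{\cA_i'}$, which makes the Poisson factor for $\cA_i+\cA_i'$ literally the product of the factors for $\cA_i$ and $\cA_i'$; combined with $(f_if_i')(\bfxi)=f_i(\bfxi)f_i'(\bfxi)$, generic simplicity of the roots of the subsystem, and your Zariski-density reduction, this yields the identity. You explicitly flag this verification as ``the main obstacle,'' so the proof is incomplete precisely where the content lies.

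The unspecified exponent $e$ is a second, related problem rather than a harmless detail. With the normalization used in this paper, $\Res_{\bfcA}=\pm\Elim_{\bfcA}^{\deg(\pi|_{W_{\bfcA}})}$, the Poisson formula of Proposition \ref{prop:5} carries no extra exponent and multiplicativity holds on the nose; for the classical irreducible resultant $\Elim_{\bfcA}$ the statement can fail, because the projection degrees need not behave multiplicatively when $\cA_i$ is replaced by $\cA_i+\cA_i'$ (compare the example $\Res_{\bfcA}=\pm u_{0,0}^{2}$, $\Elim_{\bfcA}=\pm u_{0,0}$ recalled after the definition). So ``pinning down $e$'' is not a technicality but the normalization issue on which the truth of the statement depends. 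The same caveat applies to your alternative geometric sketch: $V(gh)=V(g)\cup V(h)$ does give the set-theoretic decomposition of the pulled-back incidence variety, but upgrading this to an identity of Weil divisors (and hence of primitive equations) requires controlling multiplicities and the degrees of the projections, which is exactly where the restrictions of \cite{PS93} and the normalization of \cite{DS:srmet} enter.
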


\begin{proof}
  The validity of this formula, with some restrictions, has been
  stablished first in \cite[Proposition 7.1]{PS93}. The general case
  can be found in \cite[Corollary 4.6]{DS:srmet}.
\end{proof}

The \emph{support function} of a compact subset $P\subset \R^n$ is the
function $h_{P}\colon \R^n\to \R$ defined, for $\bfv \in\R^n$, as 
\begin{displaymath}
h_{P}(\bfv )= \inf_{\bfa\in P}\langle \bfa,\bfv \rangle,
\end{displaymath}
where $\langle\cdot,\cdot\rangle$ denotes the inner product of
$\R^{n}$. 
Let $\cB\subset \Z^{n}$ be a finite subset and $f= \sum_{\bfb\in
  \cB}\beta_{\bfb}\bfx^{\bfb}$ a Laurent polynomial with support
contained in $\cB$. 
For $\bfv \in \R^n$, we set
\begin{displaymath}
  \cB^{\bfv }=\{ \bfb\in \cB\mid \langle \bfb,\bfv \rangle =
h_{\conv(\cB)}(\bfv )\}, \quad f^{\bfv }= \sum_{\bfb\in
  \cB^{\bfv }}\beta_{\bfb}x^{\bfb}. 
\end{displaymath}

\begin{definition} \label{def:1} Let $\cA_{1},\dots,
  \cA_{n}\subset\Z^{n}$ be a family of $n$ non-empty finite subsets,
  $\bfv \in\Z^n\setminus \{0\}$, and $\bfv ^{\bot}\subset\R^{n}$ the
  orthogonal subspace. Then $\Z^n\cap \bfv ^{\bot}$ is a lattice of
  rank $n-1$ and, for $i=1,\dots,n$, there exists $\bfb _{i,\bfv }\in
  \Z^n$ such that $\cA_{i}^{\bfv }-\bfb_{i,\bfv } \subset \Z^n\cap
  \bfv ^{\bot}$.  The \emph{resultant of $\cA_{1},\dots, \cA_{n}$ in
    the direction of $\bfv $}, denoted $\Res_{\cA_{1}^{\bfv
    },\dots,\cA_{n}^{\bfv }}$, is defined as the resultant of the
  family of finite subsets $\cA_{i}^{\bfv }-\bfb_{i,\bfv }\subset
  \Z^n\cap \bfv ^{\bot}$.
  
  Let $f_{i}\in \C[x_1^{\pm1},\ldots, x_n^{\pm1}]$, $i=1,\dots,n$,
  with $\supp(f_{i})\subset\cA_{i}$.  For each $i$, write
  $f_{i}^{\bfv }=\bfx^{\bfb_{i,\bfv }}g_{i,\bfv }$ for a Laurent polynomial $g_{i,\bfv }\in
  \C[\Z^n\cap \bfv ^{\bot}]\simeq\C[y_1^{\pm1},\ldots,y_{n-1}^{\pm1}]$
  with $\supp(g_{i,\bfv })\subset \cA_{i}^{\bfv }-\bfb_{i,\bfv }$.  The expression
\begin{displaymath}
  \Res_{\cA_{1}^{\bfv },\dots,\cA_{n}^{\bfv }}(f_{1}^{\bfv },\dots,f_{n}^{\bfv })
\end{displaymath}
is defined as the evaluation of this directional resultant at the
coefficients of the $g_{i,\bfv }$.  These constructions are
independent of the choice of the vectors $\bfb_{i,\bfv }$.
\end{definition}

We have that
$\Res_{\cA_{1}^{\bfv },\dots,\cA_{n}^{\bfv }}\ne 1$ only if $\bfv $ is
an inner normal to a facet of the Minkowski sum $\sum_{i=1}^{n}\conv(\cA_{i})$.
In particular, the number of non-trivial directional resultants of the
family $\cA_{1},\dots, \cA_{n}$ is finite.

With notation as in Definition \ref{def:1},  write  $\bff=(f_{1},\dots,
f_{n})$ for short. We denote by $V(\bff)_{0}\subset \big(\C^\times\big)^n$ the set of isolated
solutions in the algebraic torus of the system of equations $f_{1}=\dots=
f_{n}=0$ and we set 
\begin{displaymath}
  Z(f_{1},\dots,
f_{n}) = \sum_{\bfxi\in V(\bff)_{0}} \mult(\bfxi|\bff) [\bfxi]
\end{displaymath}
for  the associated 0-dimensional cycle, where $ \mult(\bfxi|\bff)$ denotes
the intersection multiplicity of $\bff$ at a point $\bfxi$.  
For $f_{0}\in \C[\bfx^{\pm1}]$, we set
\begin{displaymath}
  f_{0}(Z(f_{1},\dots,
f_{n}))= \prod_{\bfxi}
f_{0}(\bfxi)^{ \mult(\bfxi|\bff)}.
\end{displaymath}

The following result is known as the Poisson formula for sparse resultants. 

\begin{proposition} \label{prop:5} Let $\bfcA=(\cA_0,\ldots,\cA_n)$ be
  a family of non-empty finite subsets of $\Z^n$ and $f_i\in \C[\bfx^{\pm1}]$ a
  Laurent polynomial with $\supp(f_{i})\subset \cA_i$, $i=0,\dots,n$. Suppose that
  $\Res_{\cA_{1}^{\bfv },\dots,\cA_{n}^{\bfv }}(f_{1}^{\bfv },\dots,f_{n}^{\bfv })\ne
  0$ for all $\bfv \in \Z^n\setminus \{\bfzero\}$. Then
\begin{equation*}
  \Res_{\bfcA}(f_{0},f_{1},\dots,f_{n})= \pm
\bigg(\prod_{\bfv }\Res_{\cA_{1}^{\bfv
    },\dots,\cA_{n}^{\bfv }}(f_{1}^{\bfv },\dots,f_{n}^{\bfv
  })^{-h_{\cA_{0}}(\bfv )} \bigg)  f_{0}(Z(f_{1},\dots,f_{n}))  ,
\end{equation*}
the product being over all primitive elements $\bfv \in\Z^n$.

\end{proposition}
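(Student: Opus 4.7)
The plan is to view both sides of the formula as polynomial functions in the coefficients of a general polynomial $F_0=\sum_{\bfa\in\cA_0}u_{0,\bfa}\bfx^\bfa$ and compare them. First, the hypothesis that $\Res_{\cA_1^\bfv,\dots,\cA_n^\bfv}(f_1^\bfv,\dots,f_n^\bfv)\neq0$ for every $\bfv\in\Z^n\setminus\{\bfzero\}$ places $(f_1,\dots,f_n)$ in the regime where Bernstein's theorem applies without boundary loss: the cycle $Z(f_1,\dots,f_n)$ consists of exactly $D=\MV_{\R^n}(Q_1,\dots,Q_n)$ points of $(\C^\times)^n$ counted with multiplicity. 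Consequently the right-hand side $F_0(Z(f_1,\dots,f_n))=\prod_\bfxi F_0(\bfxi)^{\mult(\bfxi|\bff)}$ is a polynomial in $\bfu_0$ that is homogeneous of degree $D$, and by Proposition \ref{prop:6} the polynomial $\Res_\bfcA(F_0,f_1,\dots,f_n)$ in $\bfu_0$ is also homogeneous of degree~$D$.

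Next I would show that these two polynomials define the same divisor on $\P(\C^{\cA_0})$. The geometric description of $\Res_\bfcA$ as a primitive equation for the direct image $\pi_*(W_\bfcA)$, together with the non-vanishing of all directional resultants (which prevents spurious contributions from the toric boundary), implies that $\Res_\bfcA(F_0,f_1,\dots,f_n)$ vanishes at an $F_0$ if and only if $F_0(\bfxi)=0$ for some $\bfxi\in|Z(f_1,\dots,f_n)|$, with the order of vanishing along each hypersurface $\{F_0(\bfxi)=0\}$ matching $\mult(\bfxi|\bff)$. The same is true of the right-hand side by construction. Combined with the degree count, this forces
\begin{equation*}
  \Res_\bfcA(F_0,f_1,\dots,f_n) = C(f_1,\dots,f_n)\cdot F_0(Z(f_1,\dots,f_n))
\end{equation*}
for some scalar $C(f_1,\dots,f_n)$ depending only on $f_1,\dots,f_n$.

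The remaining task, which is the heart of the proof, is to identify $C(f_1,\dots,f_n)$ as $\pm\prod_\bfv\Res_{\cA_1^\bfv,\dots,\cA_n^\bfv}(f_1^\bfv,\dots,f_n^\bfv)^{-h_{\cA_0}(\bfv)}$. For this I would exploit the behavior under one-parameter subgroups: for each primitive $\bfv\in\Z^n$, substitute $\bfx\mapsto t^\bfv\bfy$ and track the leading order in $t$ as $t\to 0$. Each $f_i$ then factors as $t^{h_{\cA_i}(\bfv)}(f_i^\bfv+O(t))$, while $F_0$ transforms by $t^{h_{\cA_0}(\bfv)}$ (in a generic monomial). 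By the homogeneity and quasi-homogeneity properties of $\Res_\bfcA$ recorded in Proposition \ref{prop:6}, the total $t$-weight of the left-hand side is governed by $\sum_i h_{\cA_i}(\bfv)\deg_{\bfu_i}(\Res_\bfcA)$, and the matching leading coefficient is a resultant built from the initial forms, namely the directional resultant $\Res_{\cA_1^\bfv,\dots,\cA_n^\bfv}(f_1^\bfv,\dots,f_n^\bfv)$ raised to the power $h_{\cA_0}(\bfv)$. On the right-hand side, only the roots $\bfxi$ escaping to the $\bfv$-toric stratum contribute to the leading order, and their contribution to $F_0(Z)$ is balanced by the same power of $t$. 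Matching powers of $t$ for each primitive $\bfv$ pins down $C(f_1,\dots,f_n)$ up to sign.

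The main obstacle is precisely this last step: carrying out the torus degeneration cleanly and proving that the leading coefficient is the product of directional resultants with the predicted exponents. The clean way to do it is to first establish the identity when $\cA_0=\{\bfzero,\bfa\}$ and $F_0=u_0+u_1\bfx^\bfa$, where the degeneration involves only one direction at a time, and then extend to arbitrary $\cA_0$ via the multiplicativity formula of Proposition \ref{additivity} applied to a factorization of $F_0$ into binomial factors; since $h_{\cA_0}(\bfv)=\min_{\bfa\in\cA_0}\langle\bfa,\bfv\rangle$ is piecewise linear, the exponents on the directional resultants combine correctly when $F_0$ is reconstructed from its binomial pieces.
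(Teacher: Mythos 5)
The paper does not actually prove this proposition: it is quoted from Pedersen--Sturmfels (their Theorem~1.1) and, in the generality needed here, from D'Andrea--Sombra, where it is the main theorem of that paper. So your proposal is an attempt to reprove a substantial external result, and as written it leaves unproved exactly the points where the real work lies. First, the assertion that after specializing $f_1,\dots,f_n$ the polynomial $\Res_{\bfcA}(F_0,f_1,\dots,f_n)\in\C[\bfu_0]$ is nonzero and that its divisor in $\P(\C^{\cA_0})$ equals $\sum_{\bfxi}\mult(\bfxi|\bff)\,\{F_0(\bfxi)=0\}$ is stated, not argued; this is precisely where the hypothesis on the directional resultants must be put to work, and where the subtlety that $\Res_{\bfcA}$ can be a proper power of the classical eliminant (cf.\ the paper's example $\Res_{\bfcA}=\pm u_{0,0}^{2}$) makes a naive comparison of zero loci and multiplicities delicate.

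Second, and more seriously, your scheme for pinning down the constant $C(f_1,\dots,f_n)$ does not go through as described. The substitution $\bfx\mapsto t^{\bfv}\bfy$ is a monomial change of variables, under which the resultant changes only by a monomial in $t$ dictated by its quasi-homogeneity, so ``matching leading orders in $t$'' does not by itself isolate $\Res_{\cA_{1}^{\bfv},\dots,\cA_{n}^{\bfv}}(f_{1}^{\bfv},\dots,f_{n}^{\bfv})$; what is really required is the description of the initial forms of the sparse resultant as products of face (directional) resultants, a theorem of depth comparable to the Poisson formula itself and not a consequence of Proposition~\ref{prop:6}. The proposed reduction to the binomial case also fails: a general $F_0$ with support $\cA_0$ admits no factorization into binomials; Proposition~\ref{additivity} applies only when the support family is the Minkowski sum $\cA_i+\cA_i'$; and evaluating $\Res_{\cA_0,\cA_1,\dots,\cA_n}$ at an $f_0$ supported on a proper subset $\cB_0\subset\cA_0$ is not the same as $\Res_{\cB_0,\cA_1,\dots,\cA_n}(f_0,f_1,\dots,f_n)$ --- the two differ by extra factors that are again powers of directional resultants, i.e.\ by the very boundary terms the formula is supposed to compute. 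The overall outline (proportionality in $\bfu_0$ plus identification of the constant via toric degeneration) is indeed the classical Pedersen--Sturmfels route, but in the form you give it the two hardest steps are missing.
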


\begin{proof}
  This formula has been obtained, under some restrictions on the
  supports, by Pedersen and Sturmfels in \cite[Theorem 1.1]{PS93}. The
  general case can be found in~\cite[Theorem 1.1]{DS:srmet}.
\end{proof}

From now on, we fix a family of non-empty finite subsets $\cA_{1},\dots,
\cA_{n}$ of $\Z^{n}$ such that
$\MV_{\R^{n}}(Q_{1},\dots,Q_{n})\ge1$, where $Q_{i}=\conv(\cA_{i})$.  We consider also a family of Laurent
polynomials $f_{1},\dots, f_{n}\in \C[\bfx^{\pm1}]$ with
$\supp(f_{i})\subset \cA_{i}$ and
$\Res_{\cA_{1}^{\bfv},\dots,\cA_{n}^{\bfv}}(f_{1}^{\bfv},\dots,f_{n}^{\bfv})\ne
0$ for all $\bfv\in \Z^{n}\setminus \{0\}$.  By
Bernstein's theorem \cite[Theorem~B]{Ber75},
\begin{equation*} 
  \deg(Z(\bff))= \MV_{\R^{n}}(Q_{1},\dots, Q_{n})\ge 1.
\end{equation*}
Consider the projection  $ \pi_{\bfe_{1}}\colon \R^{n}\to \R^{n-1}$ 
given by $\pi_{\bfe_{1}}(x_1,x_{2},\dots, x_n)=(x_2,\dots,x_{n})$.  If we
regard each $f_i$ as a Laurent polynomial in the group of variables $\bfx' :=
(x_2,\dots,x_{n}) $ with coefficients in the ring $\C[{x_1}^{\pm1}]$,
its support with respect to $\bfx'$ is contained in the
finite subset $\pi_{\bfe_1}(\cA_i)$ of $\R^{n-1}$.  Set then
$$
R(\bff)= \Res_{\pi_{e_1}(\cA_1),\dots,\pi_{e_1}(\cA_{n})}
\big(f_1(x_1,\bfx') ,\dots, f_n(x_1,\bfx')\big) \in \C[x_1^{\pm1}]
$$
for the evaluation of the resultant at these coefficients in
$\C[x_{1}^{\pm1}]$.

Recall that the {sup-norm} of a Laurent polynomial $f\in
\C[\bfx^{\pm1}]$ is defined as
\begin{math}
 \|f\|_{\sup}=\sup_{\bfw\in (S^1)^n}|f(\bfw)|.    
\end{math}
In general, it holds that 
\begin{equation} \label{eq:17}
\h(f)\le \log \|f\|_{\sup}
\end{equation}
This is a consequence of Cauchy's formula for the coefficients of the
Laurent  expansion of a holomorphic function on $(\C^{\times})^{N}$ (see for
instance \cite[page 1255]{Som04}).

The following result gives a bound for the sup-norm of $R(\bff)$. Its
proof is a variant of that for \cite[Lemma~1.3]{Som04}.

\begin{lemma}\label{bound}
Let notation be as above. Then
$$
\log\|R(\bff)\|_{\sup}\le \sum_{j=1}^n\MV_{\bfe_1^\bot}(\{\pi_{\bfe_1}(Q_\ell)\}_{\ell\ne j})
\log\|f_j\|_{\sup}.
$$
\end{lemma}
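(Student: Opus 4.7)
The plan is to bound $\|R(\bff)\|_{\sup}$ in two stages: first reduce, via a pointwise evaluation argument, to bounding a sparse resultant of Laurent polynomials in $n-1$ variables; and then apply a sup-norm estimate for sparse resultants in the spirit of \cite[Lemma~1.3]{Som04}.

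For the first stage, by the definition of $\|\cdot\|_{\sup}$ for Laurent polynomials, we have
\[
\|R(\bff)\|_{\sup}=\sup_{|w_{1}|=1}|R(\bff)(w_{1})|.
\]
Fix $w_{1}\in S^{1}$ and set $h_{i}(\bfx'):=f_{i}(w_{1},\bfx')\in\C[\bfx'^{\pm1}]$, whose support with respect to $\bfx'$ is contained in $\pi_{\bfe_{1}}(\cA_{i})$. Since the formation of the sparse resultant commutes with ring homomorphisms on the coefficients, one has
\[
R(\bff)(w_{1})=\Res_{\pi_{\bfe_{1}}(\cA_{1}),\dots,\pi_{\bfe_{1}}(\cA_{n})}(h_{1},\dots,h_{n}).
\]
Moreover, the trivial estimate $|h_{i}(\bfw')|=|f_{i}(w_{1},\bfw')|\le\|f_{i}\|_{\sup}$ for $\bfw'\in(S^{1})^{n-1}$ gives $\|h_{i}\|_{\sup}\le\|f_{i}\|_{\sup}$.

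The core of the proof is then the following sup-norm bound for sparse resultants: if $\bfcB=(\cB_{1},\dots,\cB_{n})$ is a family of non-empty finite subsets of $\Z^{n-1}$ and $g_{i}\in\C[\bfy^{\pm1}]$ is a Laurent polynomial with $\supp(g_{i})\subset\cB_{i}$, then
\[
|\Res_{\bfcB}(g_{1},\dots,g_{n})|\le\prod_{j=1}^{n}\|g_{j}\|_{\sup}^{d_{j}},\qquad d_{j}=\MV_{\R^{n-1}}(\{\conv(\cB_{\ell})\}_{\ell\ne j}),
\]
where the exponents are the partial degrees given by Proposition~\ref{prop:6}. I would derive this by viewing $\Res_{\bfcB}$ as a polynomial in the coefficients of the $g_{i}$, homogeneous of multidegree $(d_{1},\dots,d_{n})$, and then combining the Poisson formula (Proposition~\ref{prop:5}) with Jensen's inequality to control the evaluation in terms of the sup-norms—this is the variant of \cite[Lemma~1.3]{Som04} adapted from heights to sup-norms using \eqref{eq:17}.

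Applying this estimate to $g_{i}=h_{i}$ with $\cB_{i}=\pi_{\bfe_{1}}(\cA_{i})$, so that $d_{j}=\MV_{\bfe_{1}^{\bot}}(\{\pi_{\bfe_{1}}(Q_{\ell})\}_{\ell\ne j})$, yields
\[
\log|R(\bff)(w_{1})|\le\sum_{j=1}^{n}\MV_{\bfe_{1}^{\bot}}(\{\pi_{\bfe_{1}}(Q_{\ell})\}_{\ell\ne j})\log\|f_{j}\|_{\sup}.
\]
Taking the supremum over $w_{1}\in S^{1}$ gives the desired inequality. The main obstacle is the sup-norm bound on the sparse resultant; the other steps—evaluation commuting with the resultant, the trivial comparison of sup-norms, and the identification of the exponents as mixed volumes of projected polytopes—are direct consequences of the definitions and of the earlier material in the paper.
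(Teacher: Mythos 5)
Your first stage is fine: evaluating at a fixed $w_{1}\in S^{1}$, using that specialization of the coefficients (a ring homomorphism $\C[x_{1}^{\pm1}]\to\C$) commutes with evaluating the resultant, the trivial comparison $\|f_{i}(w_{1},\cdot)\|_{\sup}\le\|f_{i}\|_{\sup}$, and the identification of the partial degrees with $\MV_{\bfe_1^\bot}(\{\pi_{\bfe_1}(Q_\ell)\}_{\ell\ne j})$ via Proposition \ref{prop:6}. The gap is your second stage. The constant-free estimate $\log|\Res_{\cB_{1},\dots,\cB_{n}}(g_{1},\dots,g_{n})|\le\sum_{j}d_{j}\log\|g_{j}\|_{\sup}$ is not an auxiliary fact: it is essentially the lemma itself (it is the special case of Lemma \ref{bound} in which the $f_{j}$ do not involve $x_{1}$), and you assert it rather than prove it. The derivation you sketch does not deliver it: viewing the resultant as a multihomogeneous polynomial in the coefficients and bounding those coefficients through \eqref{eq:17} only gives $\log|\Res(g_{1},\dots,g_{n})|\le \h(\Res)+\log\#\supp(\Res)+\sum_{j}d_{j}\log\|g_{j}\|_{\sup}$, with additive terms that do not go away; and the alternative ``Poisson formula plus Jensen'' route is not worked out --- in Proposition \ref{prop:5} the roots in the product lie off the unit torus, so their contribution is not bounded by a power of the sup-norm without a genuine Mahler-measure-type argument, and the directional-resultant factors raised to $-h_{\cA_{0}}(\bfv)$ must also be controlled.

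The missing mechanism is precisely what the paper's proof supplies, by a tensor-power trick: Proposition \ref{additivity} gives $R(\bff)^{k^{n}}=R_{k}(\bff^{k})$ with $R_{k}=\Res_{k\pi_{\bfe_1}(\cA_1),\dots,k\pi_{\bfe_1}(\cA_{n})}$; by Proposition \ref{prop:6} both $\h(R_{k})$ and $\log\#\supp(R_{k})$ are $O(k^{n-1}\log(k+1))$, while the main term $\sum_{j}\deg_{\bfu_{j}}(R_{k})\log\|f_{j}^{k}\|_{\sup}$ scales like $k^{n}$, so the crude bound above, applied to $R_{k}(\bff^{k})$, divided by $k^{n}$ and letting $k\to\infty$, eliminates the additive error. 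If you keep your reduction to constant coefficients in $n-1$ variables, you still need this (or an equivalent) limiting argument to establish your ``core estimate''; quoting \cite[Lemma~1.3]{Som04} as if it applied verbatim is not sufficient --- the paper itself only adapts the proof of that lemma, and your write-up leaves the main content of Lemma \ref{bound} unproved.
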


\begin{proof}
Let $k\ge 1$. Proposition \ref{additivity} implies that
\begin{equation}\label{eq:19}
R(\bff)^{k^{n}}=\Res_{k\pi_{\bfe_1}(\cA_1),\dots,k\pi_{\bfe_1}(\cA_{n})}
(f_1(x_1,\bfx')^k ,\dots, f_n(x_1,\bfx')^k),
\end{equation}
where $k\pi_{\bfe_1}(\cA_j)$ denotes the pointwise sum of $k$ copies
of $\pi_{\bfe_1}(\cA_j)$.  For short, set
$R_{k}=\Res_{k\pi_{\bfe_1}(\cA_1),\dots,k\pi_{\bfe_1}(\cA_{n})}$ and
$\bff^{k}=(f_{1}^{k},\dots,f_{n}^{k})$, so that the identity above can
be rewritten as $ R(\bff)^{k^{n}}=R_{k}(\bff^{k})$. By Proposition
\ref{prop:6},  the partial degrees of this
resultant are given by
\begin{displaymath}
  \deg_{\bfu_{j}}(R_{k})=\MV_{\bfe_1^\bot}(\{k\pi_{\bfe_1}(Q_\ell)\}_{\ell\ne
    j})=k^{n-1}\MV_{\bfe_1^\bot}(\{\pi_{\bfe_1}(Q_\ell)\}_{\ell\ne j}), 
\end{displaymath}
where $\bfu_{j}$ is a group of $\#k\cA_{j}$ variables, for $j=1,\dots,
n$. In particular, the logarithm of its number of monomials is bounded
from above as
\begin{multline*}
  \log(\#\supp(R_{k}))\le\log\bigg(  \prod_{j=1}^{n}{\#k\cA_{j}+
    \deg_{\bfu_{j}}(R_{k})\choose \#k\cA_{j}}\bigg) \\ \le
    \sum_{j=1}^{n}\deg_{\bfu_{j}}(R_{k})\log(\#k\cA_{j}+1)= O(k^{n-1}\log(k+1)),
\end{multline*}
since $\#k\cA_{j}\le (k+1)^{c_{1}n}$ for a constant $c_{1}$
independent of $k$.  By Proposition \ref{prop:6}, the height of this
resultant is bounded from above by
\begin{displaymath}
\h(R_{k})\le \sum_{j=1}^{n}\deg_{\bfu_{j}}(R_{k})\log(\#k\cA_{j})= O(k^{n-1}\log(k+1)).
\end{displaymath}
Let $w_{1}\in S^{1}$. Using~\eqref{eq:19}, \eqref{eq:17}, the
previous bounds, and the fact that $\|f_{j}^{k}\|_{\sup}=\|f_{j}\|^{k}_{\sup}$, we deduce that
\begin{align*}
 {k^{n}}\log\|R(\bff)\|_{\sup}&= \log\|R_{k}(\bff^{k}) \|_{\sup}\\
&\le
  \h(R_{k}) + \sum_{j=1}^{n}\deg_{\bfu_{j}}(R_{j})
  \log\|f_{j}^{k}\|_{\sup} + \log(\#\supp(R_{k})) \\ 
&= k^{n} \Big(\sum_{j=1}^n\MV_{\bfe_1^\bot}(\{\pi_{\bfe_1}(Q_\ell)\}_{\ell\ne j})
\log\|f_j\|_{\sup}\Big) + O(k^{n-1}\log(k+1)). 
\end{align*}
The result then follows by dividing both sides of this inequality by
$k^n$ and letting $k\to\infty$.
\end{proof}

For $\bfa\in\Z^n\setminus\{0\}$ and $z$ an additional variable, set 
$$
E_{\bfa}(\bff)= \Res_{\{\bfzero, \bfa\}, \cA_{1}, \dots,
        \cA_{n}}(z-\bfx^\bfa,f_1,\ldots,f_n) \in \C[z].
$$ 
Due to the Poisson formula for sparse resultants given in Proposition
\ref{prop:5}, we have that $Z(E_{\bfa}(\bff))=\chi_*^\bfa(Z(\bff)),$
and so $E_{\bfa}(\bff)$ can be regarded as an elimination polynomial
for the cycle $Z(\bff)$ with respect to the monomial projection
$\chi^{\bfa}$.

By \cite[Theorem 1.4]{DS:srmet}, there exists $m\in \Z$ such that
\begin{equation} \label{eq:6}
E_{\bfe_1}(\bff)(x_1) =x_{1}^{m}R(\bff).   
\end{equation}
Hence, Lemma \ref{bound}
can be regarded as a bound for the sup-norm of the elimination
polynomial $E_{\bfe_{1}}(\bff)$.  Our next step is to extend this
result to an arbitrary $\bfa$. Recall that $\pi_\bfa\colon \R^n\to
\bfa^\bot$ denotes the orthogonal projection onto the hyperplane
$\bfa^{\bot}\subset\R^{n}$.

\begin{lemma}\label{prop:1}
Following the notation above,
$$
\log\|E_{\bfa}(\bff)\|_{\sup}\le \|\bfa\|_{2} \sum_{j=1}^n
\MV_{\bfa^\bot}(\{\pi_\bfa(Q_\ell)\}_{\ell\ne j})
\log\|f_j\|_{\sup}.
$$
\end{lemma}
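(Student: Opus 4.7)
The plan is a two-step reduction: first handle primitive $\bfa$ via a unimodular change of variables that reduces us to the case $\bfa=\bfe_1$ already covered by Lemma~\ref{bound} together with~\eqref{eq:6}; then deduce the general case from a Poisson-formula identity.

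\textbf{Primitive case.} For primitive $\bfa\in\Z^n$ I would pick $C\in\SL_n(\Z)$ with $C\bfa=\bfe_1$ and introduce the monomial automorphism $\varphi$ of $(\C^\times)^n$ determined by $\varphi^*(\bfx^\bfb)=\bfy^{C\bfb}$. Setting $\widetilde f_j=\varphi^*f_j$ (supported on $C\cA_j$), the invariance of the sparse resultant under unimodular monomial substitutions yields $E_{\bfa}(\bff)(z)=\pm E_{\bfe_1}(\widetilde\bff)(z)$, while $\|\widetilde f_j\|_{\sup}=\|f_j\|_{\sup}$ because $\varphi$ preserves the polycircle $(S^1)^n$. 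Combining~\eqref{eq:6} with Lemma~\ref{bound} applied to $\widetilde\bff$ gives
$$\log\|E_{\bfa}(\bff)\|_{\sup}\le\sum_{j=1}^n\MV_{\bfe_1^\bot}\bigl(\{\pi_{\bfe_1}(CQ_\ell)\}_{\ell\ne j}\bigr)\log\|f_j\|_{\sup}.$$
To conclude I would establish the mixed-volume identity $\MV_{\bfe_1^\bot}(\{\pi_{\bfe_1}(CQ_\ell)\}_{\ell\ne j})=\|\bfa\|_2\,\MV_{\bfa^\bot}(\{\pi_\bfa(Q_\ell)\}_{\ell\ne j})$. Since $C\bfa=\bfe_1$, the maps $\pi_{\bfe_1}\circ C$ and $\pi_\bfa$ share the kernel $\R\bfa$, so $\pi_{\bfe_1}\circ C=\psi\circ\pi_\bfa$ for a unique isomorphism $\psi\colon\bfa^\bot\to\bfe_1^\bot$, and in particular $\pi_{\bfe_1}(CQ_\ell)=\psi(\pi_\bfa(Q_\ell))$. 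Expressing $C$ in orthonormal bases adapted to $\R^n=\R\bfa\oplus\bfa^\bot$ and $\R^n=\R\bfe_1\oplus\bfe_1^\bot$ produces a block triangular matrix whose upper-left entry is $1/\|\bfa\|_2$ (from $C(\bfa/\|\bfa\|_2)=\bfe_1/\|\bfa\|_2$) and whose lower-right block is the matrix of $\psi$; since $|\det C|=1$, this forces $|\det\psi|=\|\bfa\|_2$, and the identity follows.

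\textbf{General case.} Write $\bfa=k\bfa'$ with $\bfa'$ primitive and $k\ge1$. The Poisson formula (Proposition~\ref{prop:5}) applied with $\cA_0=\{\bfzero,\bfa\}$ and with $\cA_0=\{\bfzero,\bfa'\}$ yields
$$E_{\bfa}(\bff)(z)=\pm R_\bfa\prod_\bfxi(z-\bfxi^\bfa)^{\mult(\bfxi|\bff)},\qquad E_{\bfa'}(\bff)(w)=\pm R_{\bfa'}\prod_\bfxi(w-\bfxi^{\bfa'})^{\mult(\bfxi|\bff)},$$
where $R_\bfb=\prod_\bfv\Res_{\cA_1^\bfv,\dots,\cA_n^\bfv}(f_1^\bfv,\dots,f_n^\bfv)^{-\min(0,\langle\bfb,\bfv\rangle)}$. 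Since $\langle\bfa,\bfv\rangle=k\langle\bfa',\bfv\rangle$ we have $R_\bfa=R_{\bfa'}^k$. Letting $\mu_0,\dots,\mu_{k-1}$ denote the $k$-th roots of $z$ and using $z-w^k=\pm\prod_{j}(\mu_j-w)$, a short calculation collapses these formulas to $E_{\bfa}(\bff)(z)=\pm\prod_{j=0}^{k-1}E_{\bfa'}(\bff)(\mu_j)$. For $|z|=1$ each $|\mu_j|=1$, so $\|E_{\bfa}(\bff)\|_{\sup}\le\|E_{\bfa'}(\bff)\|_{\sup}^k$; applying the primitive case to $\bfa'$ and using $\|\bfa\|_2=k\|\bfa'\|_2$ together with $\bfa^\bot=\bfa'^\bot$ gives the desired bound.

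The main obstacle is the Jacobian/mixed-volume identity in the primitive case: this is where the factor $\|\bfa\|_2$ ultimately enters, and it requires the orthonormal-basis bookkeeping sketched above. Once that identity is in place, the Poisson-formula rearrangement in the general case and the appeal to Lemma~\ref{bound} in the primitive case are both routine.
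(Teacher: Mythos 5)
Your proof is correct and follows the paper's overall strategy: reduce a primitive $\bfa$ to $\bfe_{1}$ by a unimodular monomial change of variables, apply Lemma~\ref{bound} through~\eqref{eq:6}, and then treat a general $\bfa=k\bfa'$ by factoring $E_{\bfa}(\bff)$ in terms of $E_{\bfa'}(\bff)$. Two sub-steps are handled differently, and both of your versions work. For the factor $\|\bfa\|_{2}$, the paper compares $\MV_{\bfe_{1}^{\bot}}$ and $\MV_{\bfa^{\bot}}$ through a map $U\in\GL(\bfa^{\bot})$ sending an orthonormal basis of $\bfa^{\bot}$ to the lattice basis $\bfb_{2},\dots,\bfb_{n}$ of $\bfa^{\bot}\cap\Z^{n}$, and evaluates $|\det(U)|$ as the covolume $\vol(\bfa^{\bot}/(\bfa^{\bot}\cap\Z^{n}))=\|\bfa\|_{2}$ via the Brill--Gordan formula; your block-triangular determinant computation, using only $|\det C|=1$ and $C\bfa=\bfe_{1}$ in orthonormal bases adapted to $\R\bfa\oplus\bfa^{\bot}$ and $\R\bfe_{1}\oplus\bfe_{1}^{\bot}$, reaches $|\det\psi|=\|\bfa\|_{2}$ without the lattice-covolume input, and the two arguments are equivalent. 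For non-primitive $\bfa$, the paper invokes the multiplicativity of sparse resultants (Proposition~\ref{additivity}), while you derive the factorization from the Poisson formula (Proposition~\ref{prop:5}) together with $R_{\bfa}=R_{\bfa'}^{k}$; both give $E_{\bfa}(\bff)(z)=\pm\prod_{j}E_{\bfa'}(\bff)(\mu_{j})$ over the $k$-th roots $\mu_{j}$ of $z$ (your formulation with roots of $z$ is the correct one; the paper's display with $E_{\bfa'}(\bff)(\omega z)$ appears to be a slip, though the norm inequality it feeds into is the same), and either route yields $\log\|E_{\bfa}(\bff)\|_{\sup}\le k\log\|E_{\bfa'}(\bff)\|_{\sup}$ and hence the stated bound.
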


\begin{proof}
  Consider first the case where $\bfa\in\Z^n$ is primitive. The
  quotient $\Z^n/ \bfa\Z$ is torsion-free and so $\bfa$ can be
  completed to a basis of $\Z^n$. Equivalently, there is an invertible
  matrix $A\in \SL_n(\Z)$ with first row $\bfa$.  Set $\bfa, \bfa_2,
  \dots, \bfa_n$ and $\bfb_1,\bfb_2, \dots, \bfb_n$ for the rows of
  $A$ and of $A^{-1}$, respectively. There is a commutative diagram
\begin{displaymath}
\xymatrix{
(\C^\times)^n
\ar[r]^{\chi^\bfa} \ar[d]^{\varphi_A} & \C^\times \\
(\C^\times)^n  \ar@/^1pc/[u]^{\varphi_{A^{-1}}} \ar[ur]_{\chi^{\bfe_{1}}} &}
\end{displaymath}
where $\varphi_A$ and  $\varphi_{A^{-1}}$ are the monomial
isomorphisms given by $\bfx\mapsto \bfx^A= (\bfx^{\bfa_1},\dots, \bfx^{\bfa_n})$ 
and $\bfx\mapsto \bfx^{A^{-1}}= (\bfx^{\bfb_1},\dots, \bfx^{\bfb_n})$,
respectively. 
Let $\bfy=(y_1,\dots, y_n)$ denote the coordinates of the algebraic torus below. For $\ell=1,\ldots, n,$ set
$$
f_\ell^A(\bfy)= \varphi_{A^{-1}}^{*}f_\ell=f_\ell(\bfy^{\bfb_1},\dots,
\bfy^{\bfb_n}) \in \C[\bfy^{\pm1}], 
$$
so that $(\varphi_A)_{*}Z(\bff)=Z(\bff^A)$. Hence,
$E_{\bfa}(\bff)=E_{\bff^A, \bfe_1}$ and so Lemma~\ref{bound} combined
with \eqref{eq:6} implies that
\begin{equation}\label{eq:10}
\log\|E_{\bfa}(\bff)\|_{\sup}\le \sum_{j=1}^n \MV_{\bfe_1^\bot}(\{\pi_{\bfe_{1}}(\newton(f_\ell^A))\}_{\ell\ne j})
\log\|f_j^A\|_{\sup},
\end{equation}
where $\newton(f_\ell^A)$ is the Newton polytope of $f_{\ell^{A}}$. We
have 
\begin{math}
\pi_{\bfe_{1}}(\newton(f_\ell^A))= \wt B (\newton(f_\ell)) \subset \wt B(Q_{\ell})  
\end{math}
for the linear map $\wt B\colon  \R^n\to \R^{n-1}$ given by $\wt
B(\bfx)=(\langle \bfx,\bfb_2\rangle, \dots, \langle
\bfx,\bfb_n\rangle)$. Let $\{\bfv_2,\ldots,\bfv_n\}$ be an orthonormal
basis of $\bfa^\bot$ and consider a second commutative diagram
\begin{displaymath}
\xymatrix{
\R^n
\ar[r]^{\wt B} \ar[d]_{\pi_\bfa} & \R^{n-1}\ar[d]^C \\
\bfa^\bot \ar[r]_U & \bfa^\bot}
\end{displaymath}
where $C$ is the linear map defined by $ C (y_2,\dots, y_n)=y_2\bfv_2
+ \cdots + y_n \bfv_n$. It is easy to verify that $U\in
\GL(\bfa^\bot)$ is uniquely determined by $\pi_\bfa$, $\wt{B}$ and
$C$.  Since $C$ maps the canonical basis of $\R^{n-1}$
into an orthonormal basis of $\bfa^\bot$, it is an isometry between
these two spaces. On the other hand, a straightforward computation
shows that
$$U(\bfv_j)=\sum_{k=2}^n\langle \bfv_j,\bfb_k\rangle \bfv_k=\bfb_j, \ \quad j=2,\ldots, n.
$$
We note that $\bfb_2,\dots, \bfb_n$ is a basis of the $\Z$-module
$\bfa^\bot\cap \Z^n$.  The Brill-Gordan formula~\cite[Chapitre~3,
\S~11, Proposition~15]{Bourbaki:ema} implies that $\vol(
\bfa^\bot/(\bfa^\bot \cap \Z^n))=\|\bfa\|_{2}$. Hence,
\begin{equation}
  \label{eq:16}
|\det(U)|=\vol( \bfa^\bot/(\bfa^\bot \cap \Z^n))=\|\bfa\|_{2}.  
\end{equation}

Since $C$ is an isometry, \cite[Theorem 4.12(a)]{CLO05} implies that, for $j=1,\dots,n$,
\begin{equation*}
  \MV_{\bfe_1^\bot}(\{\wt B(Q_\ell)\}_{\ell\ne j}) =
  \MV_{\bfa^\bot}(\{C\circ \wt B(Q_\ell)\}_{\ell\ne j})
  = \MV_{\bfa^\bot}(\{U\circ\pi_\bfa (Q_\ell)\}_{\ell\ne j}).
\end{equation*}
By \eqref{eq:16}, $\|\bfa\|^{-1/(n-1)}U$ is a volume preserving
map. Applying \cite[Theorem~4.12(a,b)]{CLO05}, we deduce that
\begin{equation*}
   \MV_{\bfa^\bot}(\{U\circ\pi_\bfa (Q_\ell)\}_{\ell\ne j})
  =\|\bfa\|_{2} \MV_{\bfa^\bot}(\{\pi_\bfa (Q_\ell)\}_{\ell\ne j}).
\end{equation*}
In addition,  $\varphi_A$ gives an automorphism
of $(S^1)^n$ and so $\|f_\ell^A\|_{\sup}=\|f_\ell\|_{\sup}$. We
conclude that, when $\bfa$ is primitive, 
\begin{equation}\label{eq:11}
\log\|E_{\bfa}(\bff)\|_{\sup}\le \|\bfa\|_{2} \sum_{j=1}^n
\MV_{\bfa^\bot}(\{\pi_\bfa(Q_\ell)\}_{\ell\ne j})
\log\|f_j\|_{\sup}.
\end{equation}

Now let $\bfa\in \Z^n\setminus \{0\}$ be any vector. Choose
a primitive $\bfa'\in \Z^{n}$ and $m\in \Z_{\ge1} $ such that $\bfa= m \bfa'$.
Using Proposition \ref{additivity}, we deduce that
$$
E_{\bfa}(\bff) (z)=\pm  \prod_{\omega\in \mu_m} E_{\bfa'}(\bff)(\omega z)
$$
where $\mu_m$ denotes the set of $m$-th roots of $1$.  Hence
$\|\bfa\|_{2}=m\|\bfa'\|_{2} $, $\pi_\bfa=\pi_{\bfa'}$ and
$\log\|E_{\bfa}(\bff)\|_{\sup}\le m \log\|E_{\bfa'}(\bff)\|_{\sup}$.
The result follows from the bound~(\ref{eq:11}) applied to $\bfa'$.
\end{proof}

\begin{proof}[Proof of Theorem \ref{thm:1}]
Let $\bfa\in \Z^{n}\setminus \{\bfzero\}$. Applying
  Proposition \ref{prop:5} with $f_0=z-\bfx^\bfa$ and
  $f_0=z-\bfx^{-\bfa},$ we get that the product of the leading and the
  constant coefficients of $E_{\bfa}(\bff)$ is equal to
$$\pm \prod_{\bfv}\Res_{\cA_{1}^{\bfv},\dots,\cA_{n}^{\bfv}}(f_{1}^{\bfv},\dots,f_{n}^{\bfv})^{|\langle
    \bfv,\bfa\rangle|}.
$$
Recall also that $Z(E_{\bfa}(\bff)(z))=\bfchi_*^\bfa\big(Z(\bff)\big).$
The Erd\"os-Tur\'an's theorem (Theorem \ref{mtunivariate}) then implies that
$$\Deltaa\big(\bfchi_*^\bfa(Z(\bff))\big)\leq c\, \sqrt{\frac{1}{D}\log\Bigg(\frac{\|E_{\bfa}(\bff)(z)\|_{\sup}}
{\prod_{\bfv}|\Res_{\cA_{1}^{\bfv},\dots,\cA_{n}^{\bfv}}(f_{1}^{\bfv},\dots,f_{n}^{\bfv})|^{\frac{|\langle\bfv,\bfa\rangle |}{2}}}\Bigg)},
$$
with $c=2.5619\dots$
 Lemma \ref{prop:1} implies that $ \log\bigg(\frac{\|E_{\bfa}(\bff)(z)\|_{\sup}}
{\prod_{\bfv}|\Res_{\cA_{1}^{\bfv},\dots,\cA_{n}^{\bfv}}(f_{1}^{\bfv},\dots,f_{n}^{\bfv})|^{\frac{|\langle\bfv,\bfa\rangle
      |}{2}}}\bigg)$ is bounded from above by the quantity
 \begin{displaymath}
{\|\bfa\|_{2}} \log\Bigg(\frac{\prod_{i=1}^{n}\|f_i\|_{\sup}^{D_{\bfw,i}}} 
 { \prod_{\bfv}|\Res_{\cA_{1}^{\bfv},\dots,\cA_{n}^{\bfv}}(f_{1}^{\bfv},\dots,f_{n}^{\bfv})|^{\frac{|\langle
          \bfv,\bfw \rangle|}{2}}}\Bigg),
 \end{displaymath}
for $\bfw=\frac{\bfa}{\|\bfa\|_{2}}\in S^{n-1}$. 
From the definitions of $\theta(Z(\bff))$ and of the Erd\"os-Tur\'an
size~$\eta(\bff)$ given in \eqref{theta} and \eqref{eq:1}, respectively, we get
$$\theta(Z(\bff))\leq \min\{1,c\, \sqrt{\eta(\bff)}\}.
$$
Applying Theorem \ref{thm:2} and the fact that the function
$ t^{\frac23}(9-\log(t))$ is monotonically
increasing in the interval $(0,1]$, we deduce that
\begin{align*}
\Deltaa(Z(\bff))&\le 22 n\bigg(\frac83\bigg)^{n} \big(9-\log(\min\{1,c\,
\sqrt{\eta(\bff)}\})\big)^{\frac23(n-1)} \min\{1,c\,
\sqrt{\eta(\bff)}\}^{\frac23}\\
&\le 22  n\bigg(\frac83\bigg)^{n}
2^{-\frac23(n-1)}(18+\log^{+}({\eta(\bff)}^{-1}))^{\frac23(n-1)}
c^{\frac23}  {\eta(\bff)}^{\frac13}\\
&\le
66\,n\, 2^{n} (18+\log^{+}({\eta(\bff)}^{-1}))^{\frac23(n-1)}
 {\eta(\bff)}^{\frac13},
\end{align*}
which gives the bound for the angle discrepancy.  For the radius
discrepancy, we use the bounds given in Theorem \ref{mtunivariate},
\eqref{theta}, and Theorem \ref{thm:2} to get, for $0<\varepsilon<1,$
\begin{displaymath}
\Deltar(\bff,\varepsilon)\leq\sum_{j=1}^n\Deltar(\bfchi_*^{\bfe_j}(Z(\bff)),\varepsilon)\\
\leq \frac{2n}{\varepsilon}\,\eta(\bff). 
\end{displaymath}
This concludes with the proof of the theorem.
\end{proof}

We next study a  number of basic properties of the Erd\"os-Tur\'an
size. The following proposition shows that this notion generalizes the
measure of polynomials that appears in the statement of Theorem
\ref{mtunivariate}.

\begin{proposition} \label{prop:2}
Let $d\ge1$ and 
$f= a_0+\cdots+a_dx^d  \in \C[x] $
with $a_0a_d \ne 0$. Then  
$$
\eta(f)=\frac{1}{d}\log\bigg(\frac{\|f\|_{\sup}}{\sqrt{|a_0a_d|}}\bigg).$$
\end{proposition}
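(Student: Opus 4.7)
The plan is to unfold the definition of $\eta(\bff)$ from~\eqref{eq:1} in the univariate case $n=1$ and identify each factor explicitly. Since $a_0a_d\ne 0$, both $0$ and $d$ lie in $\supp(f)$, so $Q_1=\conv(\supp(f))=[0,d]$ and $D=\MV_\R([0,d])=d$. The sphere $S^{n-1}=S^0$ consists of the two points $\pm 1$, but the expression inside the supremum in~\eqref{eq:1} is invariant under $\bfw\mapsto -\bfw$ (each $|\langle\bfv,\bfw\rangle|$ is), so it suffices to evaluate at $\bfw=+1$. In that case $\bfw^\bot=\{0\}$ is zero-dimensional, and $D_{\bfw,1}$ is the mixed volume of the empty family of convex bodies in a zero-dimensional ambient space, which equals $1$ by the usual normalization; hence the numerator of~\eqref{eq:1} reduces to $\|f\|_{\sup}$.

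For the denominator, the primitive vectors in $\Z$ are $\pm 1$, and only those that are inner normals to facets of $Q_1=[0,d]$ can give a nontrivial directional resultant (as noted just after~\eqref{eq:1}); this leaves precisely $\bfv=\pm 1$, with $\cA_1^{+1}=\{0\}$, $\cA_1^{-1}=\{d\}$, $f^{+1}=a_0$ and $f^{-1}=a_dx^d$. At $\bfw=+1$ both exponents $|\langle\pm 1,1\rangle|/2$ equal $\tfrac12$, so the denominator of~\eqref{eq:1} is $\bigl(|\Res_{\cA_1^{+1}}(a_0)|\cdot |\Res_{\cA_1^{-1}}(a_dx^d)|\bigr)^{1/2}$, and it remains to show that this product of directional resultants has absolute value $|a_0a_d|$.

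For the latter, I invoke the identity recorded in the proof of Theorem~\ref{thm:1}: the product of the leading and constant coefficients of $E_\bfa(\bff)$ equals $\pm\prod_\bfv \Res_{\cA_1^\bfv,\dots,\cA_n^\bfv}(f_1^\bfv,\dots,f_n^\bfv)^{|\langle\bfv,\bfa\rangle|}$. For $n=1$ and $\bfa=1$ the elimination polynomial is $E_1(f)(z)=\Res_{\{0,1\},\cA_1}(z-x,f)$, which up to sign is the classical Sylvester resultant $\pm f(z)$; its leading and constant coefficients are $\pm a_d$ and $\pm a_0$, so the product of directional resultants indeed has absolute value $|a_0a_d|$. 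Combining everything yields $\eta(f)=\tfrac{1}{d}\log\bigl(\|f\|_{\sup}/\sqrt{|a_0a_d|}\bigr)$.

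The main obstacle is giving meaning to the degenerate directional resultants $\Res_{\cA_1^{\pm 1}}(f^{\pm 1})$, whose ambient lattice $\Z\cap(\pm 1)^\bot$ is a single point. Rather than parsing the sparse-resultant definition at this edge, the strategy above reads off the only quantity that matters in~\eqref{eq:1}, namely their product, directly from the Sylvester resultant of $z-x$ and $f$.
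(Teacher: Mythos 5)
Your proof is correct, and most of it coincides with the paper's argument: both unfold the definition \eqref{eq:1} in the case $n=1$, using $D=\MV_{\R}([0,d])=d$, $D_{\bfw,1}=1$, and the fact that only $\bfv=\pm1$ contribute, with $\cA_1^{+1}=\{0\}$, $f^{+1}=a_0$ and $\cA_1^{-1}=\{d\}$, $f^{-1}=a_dx^d$. The genuine difference is how the resultant factor is identified. The paper simply evaluates the two degenerate directional resultants from the definition, $\Res_{\cA_1^{+1}}(f^{+1})=\pm a_0$ and $\Res_{\cA_1^{-1}}(f^{-1})=\pm a_d$, and notes $|\langle\bfv,\bfw\rangle|=1$ for $\bfv,\bfw\in\{\pm1\}$; you instead bypass the zero-dimensional edge case and extract only the product $\big|\Res_{\cA_1^{+1}}(f^{+1})\,\Res_{\cA_1^{-1}}(f^{-1})\big|=|a_0a_d|$ from the identity in the proof of Theorem \ref{thm:1} (i.e.\ from the Poisson formula, Proposition \ref{prop:5}) together with $E_1(f)=\pm f$. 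Since only that product enters $\eta(f)$, this suffices, and it has the merit of not depending on the convention for sparse resultants over the trivial lattice $\Z\cap\bfv^{\bot}=\{0\}$, while the paper's route is shorter and pins down each factor individually. One caveat: Proposition \ref{prop:5}, hence the identity you quote, is stated under the hypothesis that the directional resultants do not vanish at $f$, so strictly you still owe this; it follows immediately because each such resultant is a primitive polynomial, homogeneous in its single group consisting of one variable, hence equal to $\pm$ a power of $a_0$ (resp.\ $a_d$), which is nonzero since $a_0a_d\ne0$. With that remark added, your argument is complete.
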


\begin{proof}
The directional resultants of $f$ are 
\begin{displaymath}
  \Res_{\bfv}(f^{\bfv})
  \begin{cases}
    \pm a_{0}& \text{ for } \bfv=1,\\
\pm a_{d}& \text{ for } \bfv=-1. 
  \end{cases}
\end{displaymath}
Moreover, $D=\MV_{\R}([0,d])=d$, $D_{\bfw,1}=1$ for $\bfw\in
S^{0}=\{\pm1\}$, and $|\langle \bfv,\bfw\rangle|=1$ for all
$\bfv,\bfw\in \{\pm1\}$. The formula for $\eta(f)$ then boils down to
$\frac{1}{d}\log\Big(\frac{\|f\|_{\sup}}{\sqrt{|a_0a_d|}}\Big).$
\end{proof}

We denote by $\Delta^{n}=\conv(\bfzero, \bfe_1,\ldots,\bfe_n)\subset
\R^{n}$ the
standard $n$-simplex.

\begin{proposition} \label{prop:3} Let $\cA_{1},\dots,\cA_{n}$ be a
  family of non-empty finite subsets of $ \Z^{n} $ such that
  $\MV_{\R^{n}}(Q_{1},\dots,Q_{n})\ge1$ with $Q_{i}=\conv(\cA_{i})$.
  Let $f_{1},\dots, f_{n}\in \C[x_{1}^{\pm1},\dots, x_{n}^{\pm1}]$
  with $\supp(f_{i})\subset \cA_{i}$ and such that
  $\Res_{\cA_{1}^{\bfv},\dots,\cA_{n}^{\bfv}}(f_{1}^{\bfv},\dots,f_{n}^{\bfv})\ne
  0$ for all $\bfv\in \Z^{n}\setminus \{0\}$.
\begin{enumerate}
\item \label{item:6} $\eta(\bff)<\infty$.
\item \label{item:7} Let $\gamma_{1},\dots, \gamma_{n}\in
  \C^{\times}$. Then $\eta(\gamma_{1} f_{1},\dots,\gamma_{n}f_{n})=\eta(f_{1},\dots,f_{n})$.
\item \label{item:8} Let $d_j\in\Z_{\ge1}$ and $\bfb_j\in\Z^n$ such that
  $Q_{j}\subset d_j\Delta^{n}+\bfb_j$, $j=1,\dots,n$. Then
  \begin{multline*}
\eta(\bff)\le
\frac{1}{\MV_{\R^{n}}(Q_{1},\dots,Q_{n})}\bigg((n+\sqrt{n})\bigg(\prod_{j=1}^{n}d_j\bigg)\sum_{j=1}^n\frac{\log\|f_j\|_{\sup}}{d_j}\\
+      \sum_{\bfv}\frac{{\|\bfv\|_{2}}}{2}\log^+|\Res_{\cA_{1}^{\bfv},\dots,\cA_{n}^{\bfv}}(f_{1}^{\bfv},\dots,f_{n}^{\bfv})^{-1}|\bigg),
  \end{multline*} 
  the second sum being taken over all primitive vectors
  $\bfv\in\Z^n$. Moreover, if   $f_{1},\dots, f_{n}\in
  \Z[x_{1}^{\pm1},\dots, x_{n}^{\pm1}]$,  then 
  \begin{displaymath}
    \eta(\bff)\le
\frac{(n+\sqrt{n})\big(\prod_{j=1}^{n}d_j\big)}{\MV_{\R^{n}}(Q_{1},\dots,Q_{n})} \sum_{j=1}^n\frac{\log\|f_j\|_{\sup}}{d_j}.
  \end{displaymath}
\end{enumerate}
\end{proposition}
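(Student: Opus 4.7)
Part (1) follows from compactness and continuity. The function
$$L(\bfw)=\sum_{i=1}^n D_{\bfw,i}\log\|f_i\|_{\sup} \,-\, \sum_\bfv \tfrac{|\langle\bfv,\bfw\rangle|}{2}\log|\Res_{\cA_1^\bfv,\dots,\cA_n^\bfv}(f_1^\bfv,\dots,f_n^\bfv)|$$
is continuous on the compact sphere $S^{n-1}$: the mixed volumes $D_{\bfw,i}$ and the inner products $|\langle\bfv,\bfw\rangle|$ depend continuously on $\bfw$, the sup-norms and directional resultants are fixed nonzero real numbers, and only finitely many primitive $\bfv$ contribute (those that are inner normals to facets of $Q_1+\cdots+Q_n$). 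Hence $\sup L$ is attained and finite, proving $\eta(\bff)<\infty$.

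For part (2), I would track how the ratio in \eqref{eq:1} transforms under $f_i\mapsto\gamma_if_i$. The numerator is multiplied by $\prod_i|\gamma_i|^{D_{\bfw,i}}$. Each directional resultant $\Res_{\cA_1^\bfv,\dots,\cA_n^\bfv}(f_1^\bfv,\dots,f_n^\bfv)$ is homogeneous in the coefficients of $f_i^\bfv$ of degree $d_i(\bfv)$, the partial degree of this sparse resultant in the $i$-th group of variables, so the denominator is multiplied by $\prod_i|\gamma_i|^{\sum_\bfv d_i(\bfv)|\langle\bfv,\bfw\rangle|/2}$. Scaling invariance therefore reduces to the identity
$$D_{\bfw,i}=\sum_{\bfv}d_i(\bfv)\,\frac{|\langle\bfv,\bfw\rangle|}{2},$$
which is Cauchy's shadow formula for mixed volumes, polarized to a mixed-volume-of-projections identity, and combined with the Brill--Gordan factor $\|\bfv\|_2$ that converts the Euclidean mixed volume in $\bfv^\bot$ into the lattice mixed volume that equals $d_i(\bfv)$ by Proposition~\ref{prop:6}.

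For part (3), I would bound the two parts of $L(\bfw)$ uniformly in $\bfw\in S^{n-1}$. The inclusion $Q_j\subset d_j\Delta^n+\bfb_j$ combined with translation-invariance and multilinearity of mixed volumes gives $D_{\bfw,i}\le\big(\prod_{j\ne i}d_j\big)\MV_{\bfw^\bot}(\pi_\bfw(\Delta^n),\dots,\pi_\bfw(\Delta^n))$. The facets of $\Delta^n$ are the $n$ coordinate simplices (outer unit normal $-\bfe_j$, Euclidean $(n-1)$-volume $1/(n-1)!$) together with the diagonal facet $\conv(\bfe_1,\ldots,\bfe_n)$ (outer unit normal $\tfrac{1}{\sqrt n}(1,\dots,1)$, volume $\sqrt n/(n-1)!$), so Cauchy's shadow formula evaluates the mixed volume of the shadow as $\tfrac12(\sum_j|w_j|+|\sum_jw_j|)$. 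Bounding $\sum|w_j|\le n$ via $\|\bfw\|_\infty\le1$ and $|\sum w_j|\le\sqrt n$ via Cauchy--Schwarz yields $D_{\bfw,i}\le\tfrac{n+\sqrt n}{2}\prod_{j\ne i}d_j$, and summing on $i$ (absorbing sign issues into $\log^+$ where necessary) produces the first term of the bound with room to spare. For the resultant piece, $-\log|\Res|\le\log^+|\Res|^{-1}$ and $|\langle\bfv,\bfw\rangle|\le\|\bfv\|_2$ (Cauchy--Schwarz with $\|\bfw\|_2=1$) give the second term. In the integer case each directional resultant is a nonzero integer, so $|\Res|\ge1$, $\log^+|\Res|^{-1}=0$, and the sharper bound follows immediately.

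The main obstacle is carefully matching the normalizations of mixed volumes across the three spaces $\R^n$, $\bfw^\bot$, and the sublattice $\Z^n\cap\bfv^\bot$ (whose covolume in $\bfv^\bot$ is $\|\bfv\|_2$ by Brill--Gordan). The partial degree $d_i(\bfv)$ of the directional resultant is a \emph{lattice-normalized} mixed volume, differing from the Euclidean mixed volume in $\bfv^\bot$ by the factor $\|\bfv\|_2$; this is precisely the factor that makes the scaling identity in part (2) balance, and that produces the coefficient $\|\bfv\|_2/2$ in the resultant term of the bound in part (3). Without this careful bookkeeping the formulae will not come out right.
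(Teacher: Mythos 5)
Your argument is correct, and for parts (1) and (3) it is essentially the paper's: (1) is the same compactness--continuity remark, and in (3) you use the same inclusion $Q_j\subset d_j\Delta^{n}+\bfb_j$ together with monotonicity, homogeneity and translation invariance of mixed volumes, the bound $|\langle\bfv,\bfw\rangle|\le\|\bfv\|_{2}$, and the observation that nonzero integer resultants have absolute value at least $1$. The one genuine difference is twofold. First, in (3) the paper bounds $\vol_{\bfw^{\bot}}(\pi_\bfw(\Delta^{n}))$ by covering the shadow with the projections of the facets, getting $(n+\sqrt n)/(n-1)!$, whereas you use Cauchy's projection formula exactly and obtain the sharper constant $(n+\sqrt n)/2$ before relaxing to the stated bound; both suffice. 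Second, and more substantially, in (2) the paper argues algebraically: it reuses the Poisson formula (Proposition~\ref{prop:5}) to identify the product of the extreme coefficients of the elimination polynomial $E_{\bfa}(\bff)$ with $\prod_{\bfv}\Res_{\cA_{1}^{\bfv},\dots,\cA_{n}^{\bfv}}(f_{1}^{\bfv},\dots,f_{n}^{\bfv})^{|\langle\bfv,\bfa\rangle|}$, and then reads off the needed homogeneity balance from the partial degrees of $E_{\bfa}(\bff)$ (Proposition~\ref{prop:6} together with the $\|\bfa\|_{2}$ factor computed in the proof of Lemma~\ref{prop:1}); this gives the identity at the rational directions $\bfw=\bfa/\|\bfa\|_{2}$, which suffices by continuity. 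You instead reduce scale invariance directly to the convex-geometric identity $D_{\bfw,i}=\sum_{\bfv} d_i(\bfv)\,|\langle\bfv,\bfw\rangle|/2$ and invoke the polarized Cauchy projection formula (mixed area measures) plus the Brill--Gordan covolume $\|\bfv\|_{2}$; your normalizations do match, and this gives the identity for every $\bfw\in S^{n-1}$ at once, at the price of importing a classical convexity result that the paper deliberately avoids by recycling its sparse-resultant machinery. One caveat you share with the paper: replacing $D_{\bfw,j}$ by the larger constant in the first term of (3) tacitly uses $\log\|f_j\|_{\sup}\ge 0$; your parenthetical about ``absorbing sign issues into $\log^{+}$'' does not actually repair this, but the paper's own proof makes the same silent assumption, so your argument is on equal footing with it.
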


\begin{proof}
The statement of  \eqref{item:6} is clear, since $\eta(\bff)$ is defined as the
  supremum of a continuous function over the compact set~$S^{n-1}$. 

For  \eqref{item:7}, let $\bfa\in \Z^{n}\setminus \{\bfzero\}$. 
As explained in the proof of Theorem \ref{thm:1}, the product of the leading and the
  constant coefficients of $E_{\bfa}(\bff)$ is equal to
$$\pm \prod_{\bfv}\Res_{\cA_{1}^{\bfv},\dots,\cA_{n}^{\bfv}}(f_{1}^{\bfv},\dots,f_{n}^{\bfv})^{|\langle
    \bfv,\bfa\rangle|}.
$$
Hence, the denominator in the definition of $\eta(\bff)$ is
multihomogeneous in the coefficients of each $f_{j}$, of partial
degrees equal to  ${\|\bfa\|_{2}}^{-1}$ times those of
  $E_{\bfa}(\bff)$. 
Hence, 
\begin{displaymath}
\frac{1}{\|\bfa\|_{2}}\deg_{f_{j}}(E_{\bfa}(\bff))= 
\MV_{\bfa^\bot}\big(\pi_\bfa(Q_1),\dots,
\pi_\bfw(Q_{j-1}),\pi_\bfa(Q_{j+1}),\dots,\pi_\bfa(Q_n))=D_{\bfw,j}
\end{displaymath}
for $\bfw=\frac{\bfa}{\|\bfa\|_2}$, which  implies the statement.

For \eqref{item:8}, let $\bfw\in S^{n-1}$. Then
$\pi_\bfw(Q_j)\subset\pi_\bfw(d_j \Delta^{n}+\bfb_j)$. Due to
the monotonicity of the mixed volume with respect to the inclusion,
plus its properties of homogeneity and invariance under translation,
we deduce that, for $j=1,\dots,n$,
\begin{multline}\label{huno}
\MV_{\bfw^{\bot}}(\{\pi_\bfw(Q_\ell)\}_{\ell\ne
  j})\leq\MV_{\bfw^{\bot}}(\{\pi_\bfw(d_\ell\Delta^{n}+\bfb_\ell)\}_{\ell\ne
j})\\ \leq(n-1)!\Big(\prod_{\ell\ne j} d_{\ell}\Big)\vol_{\bfw^{\bot}}(\pi_\bfw(\Delta^{n})) .
\end{multline}
The projected simplex $\pi_\bfw(\Delta^{n})$ can be covered by the union
of the projection of its facets. One of the facets of $\Delta^{n}$ has
$(n-1)$-dimensional volume equal to $\frac{\sqrt{n}}{(n-1)!}$, while 
the other $n$ facets have $(n-1)$-dimensional  volume equal to
$\frac{1}{(n-1)!}.$ Since  the volume cannot increase under orthogonal
projections, we have that
\begin{equation}\label{hdos}
\vol_{\bfw^{\bot}}(\pi_\bfw(\Delta^{n}))\leq \frac{\sqrt{n}+n}{(n-1)!}.
\end{equation}
In addition, $|\langle \bfv,\bfw\rangle|\le \|\bfv\|_{2}$ since
$\bfw\in S^{n-1}$. Then, the first part of the statement follows from
\eqref{huno},\,\eqref{hdos} and the definition of $\eta(\bff)$.  The
second part follows from the fact that, if the coefficients of the
$f_{i}$'s are integers, then the relevant directional resultants are
nonzero integers and so their absolute values are at least 1.
\end{proof}

Let us consider the statement of Proposition \ref{prop:3}\eqref{item:8},
in the classical dense case $Q_j=d_j\Delta^{n}$ for all $j$. In this situation, the
only primitive vectors $\bfv$ to consider are $\bfv=\bfe_{i}$,
$i=1,\dots,n$, and $\bfv=\bfe_{0}:=- \sum_{i=1}^{n}\bfe_{i}.$ Given
$d_{j}\ge1$, $j=1,\dots,n$, we write $ \Res_{d_{1},\dots,d_{n}} $ for
the resultant of $n$ homogeneous polynomials in $n$ variables of respective degrees
$d_{1},\dots,d_{n},$ as defined in \cite{CLO05}.  Given a system of polynomials $f_1,\ldots,
f_n\in\C[x_1,\ldots, x_n]$ with $\deg(f_{j})\le d_{j}$ and
$i=0,\dots,n$, the initial polynomials
$f_{1}^{\bfe_i},\dots,f_{n}^{\bfe_i}$ form a system of $n$ polynomials
of degrees $d_{1},\dots,d_{n}$. In particular, we can evaluate
$\Res_{d_{1},\dots,d_{n}}$ at these polynomials. If we assume that
these resultants are nonzero, we obtain
\begin{multline*}
  \eta(\bff)\leq(n+\sqrt{n})\sum_{j=1}^n\frac{\log\|f_j\|_{\sup}}{d_j}
  +\frac{1}{2\prod_{j=1}^{n}d_j}\bigg(
  \sqrt{n}\log^+|\Res_{d_{1},\dots,d_{n}}(f_{1}^{\bfe_{0}},\dots,f_{n}^{\bfe_{0}})^{-1}|
  \\+
  \sum_{i=1}^n\log^+|\Res_{d_{1},\dots,d_{n}}(f_{1}^{\bfe_i},\dots,f_{n}^{\bfe_i})^{-1}|\bigg).
\end{multline*}
In particular, if $f_1,\ldots, f_n\in\Z[x_1,\ldots, x_n]$, then
\begin{equation*}
 \eta(\bff)\le (n+\sqrt{n})\sum_{j=1}^n\frac{\log\|f_j\|_{\sup}}{d_j}.
\end{equation*}

\section{Asymptotic equidistribution} \label{sec:equid-prob}

We will apply here the results in the previous sections to study the
asymptotic distribution of the roots of a sequence of systems of
Laurent polynomials over $\Z$ and of random systems of Laurent
polynomials over~$\C$.

First, we will consider  polynomials over $\Z$.
Let $Q_i,\dots, Q_{n}\subset \R^{n}$ be a family of lattice polytopes
such that $\MV_{\R^{n}}(Q_1,\ldots, Q_n)\ge1$.  For each integer $\kappa\geq1$ and
$i=1,\dots, n$, consider the finite subset of $\Z^{n}$ given by
\begin{equation}
  \label{eq:4}
\cA_{\kappa,i}=\kappa Q_i\cap\Z^n .
\end{equation}

\begin{proposition} \label{prop:7} For  $\kappa\ge1 $ let 
  $\bff_{\kappa}=(f_{\kappa,1},\dots, f_{\kappa,n}) $ be a family of Laurent
  polynomials in $\Z[x_1^{\pm1},\dots, x_n^{\pm1}] $ such that $
  \supp(f_{\kappa,i})\subset \kappa Q_{i}$ and
  $\Res_{\cA_{\kappa,1}^{\bfv},\dots,\cA_{\kappa,n}^{\bfv}}(f_{\kappa,1}^{\bfv},\ldots,f_{\kappa,n}^{\bfv})\ne0$
  for all $\bfv\in \Z^{n}\setminus \{\bfzero\}$.  Then there is a
  constant $c_{1} >0$ which does not depend on $\kappa$ such that
$$
\Deltaa(Z(\bff_\kappa))\le c_{1} \,
\bigg(\frac{\sum_{i=1}^{n}\log\|f_{\kappa,i}\|_{\sup}}{\kappa}\bigg)^{\frac13} 
 \bigg(1+\log^{+}\bigg(\frac{\kappa}{\sum_{i=1}^{n}\log\|f_{\kappa,i}\|_{\sup}}\bigg)\bigg)^{\frac{2}{3}(n-1)}
$$
and, for any $0<\varepsilon<1$,
$$
\Deltar(Z(\bff_\kappa),\varepsilon)\le c_{1} \,
\frac{\sum_{i=1}^{n}\log\|f_{\kappa,i}\|_{\sup}}{\varepsilon \kappa}.
$$ 
\end{proposition}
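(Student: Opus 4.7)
My plan is to deduce Proposition \ref{prop:7} by combining Theorem \ref{thm:1} with the upper bound on the Erd\"os-Tur\'an size in the integer case given by Proposition \ref{prop:3}\eqref{item:8}. Since the $Q_i$ are fixed lattice polytopes, I first choose once and for all integers $d_i \ge 1$ and lattice points $\bfb_i \in \Z^n$ with $Q_i \subset d_i \Delta^n + \bfb_i$. Then $\kappa Q_i \subset \kappa d_i \Delta^n + \kappa \bfb_i$, and since $f_{\kappa, j} \in \Z[x_1^{\pm 1}, \dots, x_n^{\pm 1}]$, the integer case of Proposition \ref{prop:3}\eqref{item:8} applied with $D_j := \kappa d_j$ yields
$$
  \eta(\bff_\kappa)
  \le \frac{(n + \sqrt{n}) \prod_{j=1}^n (\kappa d_j)}{\MV_{\R^{n}}(\kappa Q_1, \dots, \kappa Q_n)} \sum_{j=1}^n \frac{\log \|f_{\kappa, j}\|_{\sup}}{\kappa d_j}
  \le C_0 \, s_\kappa,
$$
where $s_\kappa := \kappa^{-1} \sum_{i=1}^n \log \|f_{\kappa,i}\|_{\sup} \ge 0$ and $C_0 > 0$ depends only on $n$ and the $Q_i$. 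The key point is that the factors of $\kappa^n$ coming from $\prod_j (\kappa d_j)$ and from $\MV_{\R^{n}}(\kappa Q_1, \dots, \kappa Q_n) = \kappa^n \MV_{\R^{n}}(Q_1, \dots, Q_n)$ cancel exactly, so $C_0$ is independent of $\kappa$.

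The radius bound follows immediately from Theorem \ref{thm:1}, since $\Deltar(Z(\bff_\kappa), \varepsilon) \le (2n/\varepsilon)\,\eta(\bff_\kappa) \le (2nC_0/\varepsilon) s_\kappa$ is of the required shape for any $c_1 \ge 2nC_0$. For the angle discrepancy, Theorem \ref{thm:1} yields
$$
  \Deltaa(Z(\bff_\kappa)) \le 66 \, n \, 2^n \bigl(18 + \log^+(\eta(\bff_\kappa)^{-1})\bigr)^{\frac{2}{3}(n-1)} \eta(\bff_\kappa)^{\frac{1}{3}},
$$
and the remaining task is to convert this, using $\eta(\bff_\kappa) \le C_0 s_\kappa$, into an estimate in $s_\kappa$.

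The main obstacle is that the function $G(t) := (18 + \log^+(1/t))^{\frac{2}{3}(n-1)} \, t^{1/3}$ is \emph{not} monotone on $(0, 1]$: a direct computation shows it is increasing on $(0, t^*]$ and decreasing on $[t^*, 1]$ for the threshold $t^* = e^{20 - 2n}$, which depends only on $n$. To circumvent this I will split on the size of $s_\kappa$. If $s_\kappa \le t^*/C_0$, then both $\eta(\bff_\kappa)$ and $C_0 s_\kappa$ lie in the interval of monotonicity, so $G(\eta(\bff_\kappa)) \le G(C_0 s_\kappa)$, and the elementary inequality $\log^+(1/(C_0 s_\kappa)) \le |\log C_0| + \log^+(1/s_\kappa)$ converts this into a bound of the required form. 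If instead $s_\kappa > t^*/C_0$, then $s_\kappa$ is bounded below by a positive constant depending only on $n$ and the $Q_i$, so the trivial estimate $\Deltaa(Z(\bff_\kappa)) \le 1$ is itself majorized by $c_1 s_\kappa^{1/3} (1 + \log^+(1/s_\kappa))^{\frac{2}{3}(n-1)}$ once $c_1$ is chosen large enough. Apart from this case split, the argument reduces to routine bookkeeping of the constant $c_1 = c_1(n, Q_1, \dots, Q_n)$.
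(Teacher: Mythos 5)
Your argument is correct and follows essentially the same route as the paper, which disposes of Proposition \ref{prop:7} in one line as a direct combination of Theorem \ref{thm:1} with the integer case of Proposition \ref{prop:3}\eqref{item:8}; your reduction $\eta(\bff_\kappa)\le C_0\,\kappa^{-1}\sum_i\log\|f_{\kappa,i}\|_{\sup}$, with the two factors of $\kappa^{n}$ cancelling between $\prod_j(\kappa d_j)$ and $\MV_{\R^{n}}(\kappa Q_1,\dots,\kappa Q_n)$, is exactly that reduction. The extra case split you introduce to deal with the non-monotonicity of $t\mapsto (18+\log^{+}(1/t))^{\frac23(n-1)}t^{1/3}$ (using the trivial bound $\Deltaa\le 1$ when $s_\kappa$ is bounded below by a constant depending only on $n$ and the $Q_i$) correctly supplies the bookkeeping that the paper's ``follows easily'' leaves implicit.
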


\begin{proof}
  This follows easily from Theorem \ref{thm:1} and Proposition \ref{prop:3}\eqref{item:8}.
\end{proof}

\begin{proof}[Proof of Theorem \ref{cor:2}]
  Following the notation in the statement of Theorem \ref{cor:2},
  $\nu_{Z(\bff_{\kappa})}$ is the discrete measure associated to
  $Z(\bff_{\kappa})$ and $\nu_{\haar}$ is the measure
  on~$(\C^{\times})^{n}$ induced by the Haar probability measure on
  $(S^{1})^{n}$.
 
We have to show that, for every continuous function with compact
support $h$, 
\begin{displaymath}
  \lim_{\kappa\to\infty}\int_{(\C^{\times})^{n}}h \, \d\nu_{Z(\bff_{\kappa})}=\int_{(\C^{\times})^{n}}h \, \d\nu_{\haar}.
\end{displaymath}
It is enough to prove the statement for the characteristic function $h_{U}$
of the open sets of the form 
\begin{equation} \label{eq:13}
U=  \{(z_{1},\dots, z_{n})\in(\C^\times)^n  \mid
r_{1,j}<|z_j|<r_{2,j},\,\alpha_j<\arg(z_j)<\beta_j \text{ for all }
j\},
\end{equation}
with $0\leq r_{1,j}<r_{2,j}\le \infty$, $ r_{i,j}\ne 1$ and
$-\pi<\alpha_j<\beta_j\leq\pi$, since any continuous function with
compact support can be uniformly approximated by a linear combinations
of the aforementioned characteristic functions.

Consider first the case where $U\cap(S^1)^n=\emptyset.$ Due to the
conditions imposed on the numbers $r_{i,j},$ there exists
$\varepsilon>0$ such that $U$ is disjoint with the set
\begin{displaymath}
 \{\bfxi\in\C^n
\mid \,1-\varepsilon<|\xi_j|<(1-\varepsilon)^{-1} \text{ for all } j\}.
\end{displaymath}
Hence,
$$
\int_{(\C^\times)^n}h_U\,\d\delta_{Z(\bff_\kappa)}=\frac{\deg(Z(\bff_\kappa)|_{U})}{\kappa^{n}\MV(\bfQ) }\le
\Deltar(\bff_{\kappa}, {\varepsilon}),  
$$
where $\MV(\bfQ)$ denotes the mixed volume of the polytopes $Q_{1},\dots, Q_{n}\subset\R^{n}$  and
$Z(\bff_\kappa)|_{U}=\sum_{\bfxi\in|Z(\bff_\kappa)|\cap U}m_\bfxi[\bfxi]$. Proposition \ref{prop:7}  implies
that this integral goes to $0$ for $\kappa\to\infty$, which proves the statement in this case, since 
$\int_{\C^n}h_U\,\d\nu_{\haar}=0$.

Consider now the case where $U\cap(S^1)^n\neq\emptyset$. 
Set 
\begin{math}
\ov U= \{\bfz \mid \alpha_j\le \arg(z_j)\le \beta_j \text{ for all }
j\}.   
\end{math}
Then
\begin{multline*}
  \int_{(\C^\times)^n}h_U\,\d\delta_{Z(\bff_\kappa)}-\int_{(\C^\times)^n}h_U\,\d\nu_{\haar}=
  \int_{(\C^\times)^n}h_U\,\d\delta_{Z(\bff_\kappa)}- \prod_{j=1}^{n} \frac{\beta_{j}-\alpha_{j}}{2\pi}\\
  =\int_{(\C^\times)^n}\bigg(h_{\overline{U}}-\prod_{j=1}^{n}
  \frac{\beta_{j}-\alpha_{j}}{2\pi}\bigg)\d\delta_{Z(\bff_\kappa)}-\int_{(\C^\times)^n}h_{\overline{U}\setminus
    U}\,\d\delta_{Z(\bff_\kappa)}.
\end{multline*}
We have
$$
\int_{(\C^\times)^n}\bigg|h_{\overline{U}}-\prod_{j=1}^{n} \frac{\beta_{j}-\alpha_{j}}{2\pi}\bigg|\d\delta_{Z(\bff_\kappa)}\leq   
  \bigg|\frac{\deg(Z(\bff_\kappa)_{\bfalpha,\bfbeta})}{\kappa^{n}\MV(\bfQ)} - \prod_{j=1}^{n}
\frac{\beta_{j}-\alpha_{j}}{2\pi} \bigg| \leq \Deltaa(\bff_\kappa).
$$
Again, Proposition \ref{prop:7}  implies
that this integral goes to $0$ for $\kappa\to\infty$.
On the other hand,  $\ov
U\setminus U$ is a union of a finite number of subsets $U_{l}$ of the
form \eqref{eq:13} such that $U_{l}\cap(S^1)^n=\emptyset$ for all $l$.
By the previous considerations, $\int_{(\C^\times)^n}h_{U_l}\d\delta_{Z(\bff_\kappa)}\to_{\kappa} 0$ and so
$\int_{(\C^\times)^n}h_{\ov U\setminus U}\, \d\delta_{Z(\bff_\kappa)}\to_{\kappa}0.$ 
Hence
\begin{displaymath}
\lim_{\kappa\to\infty}\int_{(\C^\times)^n}h_{{U}}\, 
\d\delta_{Z(\bff_\kappa)} = \prod_{j=1}^{n}
\frac{\beta_{j}-\alpha_{j}}{2\pi} =\int_{\C^n}h_U\,\d\nu_{\haar}=0,
\end{displaymath}
which concludes the proof.
\end{proof}


  

We will now consider random systems of Laurent polynomials with complex coefficients.
To explain and prove our results, we have to consider metrics and
measures on projective spaces over $\C$.  Let $N\ge 1$
and consider the standard Riemannian structure on $\C^{N}$ induced by
the Euclidean norm $\|\cdot\|_{2}$. Let $S^{2N-1}=\{\bfz\in \C^{N}
\mid \|\bfz\|_{2} =1\}$ be the unit sphere with the induced Riemannian
structure.  The map $S^{2N-1}\to \P(\C^{N})$ given by $(z_{0},\dots,
z_{N-1})\mapsto (z_{0}:\dots:z_{N-1})$ gives a principal bundle with
fiber $S^{1}$. The Fubini-Study metric on $\P(\C^{N})$ is defined as
the unique Riemannian structure such that this map is a Riemannian
submersion, see  \cite{KN:fdgII} for details.

The geodesics of $\P(\C^{N})$ coincide with lines. Hence, we can
define a distance between two points $\bfz_{1}$ and $\bfz_{2}$ as the
length of the line segment joining them, and we will denote it by
$\dist_{\FS}(\bfz_{1},\bfz_{2})$.  However, it will be more convenient
to consider the distance function $\dist:=\sin(\dist_{\FS})$.  This
 function can be computed with the formula
\begin{equation} \label{eq:14}
  \dist(\bfz_{1},\bfz_{2})= \sqrt{1-\Big
(\frac{|\langle
      \wt \bfz_{1},\wt \bfz_{2}\rangle|}{\|\wt \bfz_{1}\|_{2}\|\wt \bfz_{2}\|_{2}}\Big)^{2}}
\end{equation}
for any choice of representatives $\wt \bfz_{i}\in\C^{N}\setminus \{\bfzero\}$, $i=1,2$. 

\begin{lemma} \label{lemm:3} Let $\bfz_{1},\bfz_{2}\in \P(\C^{N})$ with $\wt \bfz_{i}\in \C^{N}$, $i=1,2$,  representatives of these
  points such that $\|\wt\bfz_{2}\|_2=1$. Then $\dist(\bfz_{1},\bfz_{2})\le
  \|\wt\bfz_{2}-\wt\bfz_{1}\|_{2}$.
\end{lemma}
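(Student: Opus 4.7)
The plan is to reduce the inequality to a single-variable algebraic identity by unpacking the defining formula \eqref{eq:14} for $\dist$. Set $a=\|\wt\bfz_{1}\|_{2}$ and $b=\langle\wt\bfz_{1},\wt\bfz_{2}\rangle\in\C$. Since $\|\wt\bfz_{2}\|_{2}=1$, formula \eqref{eq:14} gives
\begin{displaymath}
\dist(\bfz_{1},\bfz_{2})^{2}=1-\frac{|b|^{2}}{a^{2}},
\end{displaymath}
while expanding the squared Euclidean norm yields
\begin{displaymath}
\|\wt\bfz_{2}-\wt\bfz_{1}\|_{2}^{2}=1+a^{2}-2\,\mathrm{Re}(b).
\end{displaymath}

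Subtracting the two expressions, the desired inequality becomes the assertion that
\begin{displaymath}
a^{2}-2\,\mathrm{Re}(b)+\frac{|b|^{2}}{a^{2}}\;\ge\;0.
\end{displaymath}
Since $\mathrm{Re}(b)\le |b|$, the left-hand side is at least $a^{2}-2|b|+|b|^{2}/a^{2}=\bigl(a-|b|/a\bigr)^{2}\ge0$, which closes the proof. Note that the case $\wt\bfz_{1}=\bfzero$ is excluded because $\wt\bfz_{1}$ is a representative of a point of $\P(\C^{N})$, so $a>0$ throughout.

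There is no real obstacle here: the computation is purely algebraic once the defining formula \eqref{eq:14} is invoked, and the only nontrivial step is the observation that the cross-term $-2\,\mathrm{Re}(b)$ can be bounded by $-2|b|$ to complete a square. Geometrically, the inequality reflects the elementary fact that the chord length in the ambient Hermitian space between a unit vector and an arbitrary nonzero vector dominates the sine of the angle they subtend at the origin.
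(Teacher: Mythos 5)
Your proof is correct and is essentially the paper's own argument: both expand $\|\wt\bfz_{2}-\wt\bfz_{1}\|_{2}^{2}=1+\|\wt\bfz_{1}\|_{2}^{2}-2\operatorname{Re}\langle\wt\bfz_{1},\wt\bfz_{2}\rangle$, invoke formula \eqref{eq:14}, bound $\operatorname{Re}$ by the modulus, and recognize the difference as the square $\bigl(\|\wt\bfz_{1}\|_{2}-|\langle\wt\bfz_{1},\wt\bfz_{2}\rangle|/\|\wt\bfz_{1}\|_{2}\bigr)^{2}\ge0$. No gaps; the remark that $\|\wt\bfz_{1}\|_{2}>0$ is a welcome explicit touch.
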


\begin{proof}
We have that 
\begin{displaymath}
  \|\wt\bfz_{2}-\wt\bfz_{1}\|^{2}_{2}= \langle
      \wt\bfz_{2}-\wt\bfz_{1},\wt\bfz_{2}-\wt\bfz_{1}\rangle=1+\|\wt\bfz_{1}\|_{2}^{2}-2\operatorname{Re} (\langle
      \wt \bfz_{1},\wt \bfz_{2}\rangle)\ge 1+\|\wt\bfz_{1}\|_{2}^{2}-2|\langle
      \wt \bfz_{1},\wt \bfz_{2}\rangle|. 
\end{displaymath}
On the other hand, the
      formula   \eqref{eq:14} gives $  \dist(\bfz_{1},\bfz_{2})^{2}=  1-\big(\frac{|\langle
        \wt \bfz_{1},\wt
        \bfz_{2}\rangle|}{\|\wt\bfz_{1}\|_{2}}\big)^{2}$. Hence, 
\begin{displaymath}
  \|\wt\bfz_{2}-\wt\bfz_{1}\|^{2}_{2}-
  \dist(\bfz_{1},\bfz_{2})^{2}= \Big( \|\wt\bfz_{1}\|_{2}-\frac{|\langle
        \wt \bfz_{1},\wt
        \bfz_{2}\rangle|}{\|\wt\bfz_{1}\|_{2}}\Big)^{2}\ge0, 
\end{displaymath}
which proves the statement.
\end{proof}

We will need the following  \L ojasiewicz inequality for a 
hypersurface of a complex projective space. For a homogeneous
polynomial $f\in \C[x_{0},\dots, x_{N-1}]$ of degree $d,$ and a point 
$\bfz\in \P(\C^{N})$, the value 
\begin{displaymath}
  \frac{|f(\bfz)|}{\|\bfz\|_{2}^{d}}
\end{displaymath}
is well-defined. For a subset $E\subset \P(\C^{N})$, we write
$\dist(\bfz, E)$ for the distance between~$\bfz$ and $E$.

\begin{lemma} \label{lemm:2}
  Let  $f\in \C[x_{0},\dots,x_{N-1}]$ be a homogeneous polynomial of degree $d\ge 0$ and
  $\bfz\in \P(\C^{N})$. Then 
  \begin{displaymath}
    \frac{|f(\bfz)|}{\|\bfz\|_{2}^{d}}\ge  \bigg(\sup_{\bfx\in \P(\C^{N})}
   \frac{|f(\bfx)|}{\|\bfx\|_{2}^{d}}   \bigg) \dist(\bfz,V(f))^{d}. 
  \end{displaymath}
\end{lemma}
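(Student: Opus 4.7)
The plan is to reduce the multivariate inequality to a binary form statement on $\P^{1}$ and then to verify the univariate case by an explicit factorization.

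First I would establish the univariate case. Given a nonzero homogeneous $g(\alpha,\beta)\in\C[\alpha,\beta]$ of degree $d\ge1$, factor $g=c\prod_{i=1}^d(\beta_i\alpha-\alpha_i\beta)$ and normalize so that each root $\bfv_i=(\alpha_i,\beta_i)\in\C^2$ satisfies $\|\bfv_i\|_2=1$. A short direct computation yields, for every $\bfu=(u_1,u_2)\in\C^2$ with $\|\bfu\|_2=1$, the identity
\[
|\beta_i u_1-\alpha_i u_2|^2 \;=\; 1-|\langle\bfu,\bfv_i\rangle|^2,
\]
which by formula \eqref{eq:14} equals $\dist(\bfu,\bfv_i)^2$, where $\bfv_i$ is viewed as a point of $\P(\C^2)$. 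Hence $|g(\bfu)|=|c|\prod_{i=1}^d\dist(\bfu,\bfv_i)$. Since each factor lies in $[0,1]$, we deduce $|c|\ge M_g:=\sup_\bfx|g(\bfx)|/\|\bfx\|_2^d$, and letting $\bfv_{i_0}$ be the root of $g$ closest to $\bfu$,
\[
\frac{|g(\bfu)|}{\|\bfu\|_2^d}=|c|\prod_i\dist(\bfu,\bfv_i)\;\ge\;|c|\,\dist(\bfu,V(g))^d\;\ge\;M_g\,\dist(\bfu,V(g))^d.
\]

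Next I would reduce the general case to this one. Assume $f\not\equiv0$, set $M=\sup_{\bfx\in\P(\C^N)}|f(\bfx)|/\|\bfx\|_2^d$, and fix $\bfx^*\in\P(\C^N)$ where this supremum is attained. Choose any two-dimensional linear subspace $V\subset\C^N$ that contains representatives of both $\bfz$ and $\bfx^*$. An orthonormal basis of $V$ provides an isometry $\C^2\xrightarrow{\sim}V$, which in turn induces an isometric embedding $\iota\colon\P^1\hookrightarrow\P(\C^N)$ for the distance $\dist=\sin\circ\dist_\FS$. Setting $g=f\circ\iota$ gives a nonzero binary form of degree $d$, and since $\bfx^*\in V$ we have
\[
M_g\;=\;\sup_{\bfy\in V\setminus\{\bfzero\}}\frac{|f(\bfy)|}{\|\bfy\|_2^d}\;=\;M.
\]
Denoting by $\bfu_0\in\P^1$ the preimage of $\bfz$ under $\iota$ and applying the univariate bound,
\[
\frac{|f(\bfz)|}{\|\bfz\|_2^d}=\frac{|g(\bfu_0)|}{\|\bfu_0\|_2^d}\;\ge\;M\,\dist(\bfu_0,V(g))^d=M\,\dist(\bfz,V(f)\cap\P(V))^d\;\ge\;M\,\dist(\bfz,V(f))^d,
\]
where the middle equality uses the isometry property of $\iota$ together with $\iota(V(g))=V(f)\cap\P(V)$, and the last inequality is the monotonicity of the distance function with respect to $V(f)\cap\P(V)\subset V(f)$.

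The only point requiring actual calculation is the identity $|\beta_i u_1-\alpha_i u_2|^2=1-|\langle\bfu,\bfv_i\rangle|^2$ for unit vectors of $\C^2$; this is precisely where the choice $\dist=\sin\circ\dist_\FS$ pays off, turning the factorization of $g$ into a clean product of distances to roots. Everything else is bookkeeping: choosing the plane $V$ so that the restricted supremum matches the ambient one, and upgrading the resulting bound to one involving all of $V(f)$.
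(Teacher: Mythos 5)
Your proof is correct, and it rests on the same underlying idea as the paper's argument --- restrict $f$ to a complex projective line through $\bfz$ and factor the restriction, bounding the value from below by distances to the roots --- but the execution is genuinely different. The paper parametrizes the line through $\bfz$ and an \emph{arbitrary} $\bfx\notin V(f)$ affinely, via $t\mapsto \wt\bfz+t\wt\bfx$ with spherical representatives, bounds each root modulus $|\xi_j|$ from below by $\dist(\bfz,V(f))$ using Lemma \ref{lemm:3}, and only at the end takes the supremum over $\bfx$. You instead choose the line through $\bfz$ and a maximizer $\bfx^{*}$ of $|f(\bfx)|/\|\bfx\|_2^{d}$ (which exists by compactness of $\P(\C^{N})$ and continuity), identify it with $\P(\C^{2})$ via an orthonormal basis --- correctly observing that this is isometric for $\dist=\sin\circ\dist_{\FS}$ because formula \eqref{eq:14} only involves inner products and norms of representatives --- and prove the exact identity $|g(\bfu)|=|c|\prod_{i}\dist(\bfu,\bfv_{i})$ for unit $\bfu$ and unit root vectors, from which both $|c|\ge M_{g}=M$ and the desired lower bound follow at once. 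What your route buys is an equality (the product-of-sine-distances formula for binary forms) in place of the paper's inequality, and it dispenses with Lemma \ref{lemm:3} altogether; what it costs is the extra bookkeeping of the isometric embedding and of the attained supremum, which the paper avoids by supping over $\bfx$ at the very end. The only cases falling outside your argument as written are $f\equiv 0$ and $d=0$, for which the statement is immediate (with the empty-product convention), exactly as in the paper's proof, so this is not a genuine gap.
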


\begin{proof}
  Let $\bfz, \bfx\in \P(\C^{N})$ such that $\bfx\notin V(f)$.
  Let $\wt\bfz, \wt\bfx$ be representatives 
  of these points in the sphere $S^{2N-1}$ and set 
\begin{math}
  f_{\wt \bfx}(t)=f(\wt \bfz+ t \wt \bfx) \in \C[t].
\end{math}
This is a univariate polynomial of degree $d$ with leading coefficient
$f(\wt\bfx)$. Then, there exist $\xi_{j}\in \C$, $j=1,\dots, d$, such
that
\begin{math}
 f_{\wt
  \bfx}=f(\wt\bfx)\prod_{j}(t-\xi_{j}) 
\end{math}
and so 
\begin{displaymath}
  |f(\wt\bfz)|= |f_{\wt \bfx}(0)|=|f(\wt\bfx)| \prod_{j}|\xi_{j}| .
\end{displaymath}
For each $j$, we have that $\wt \bfz + \xi_{j}\wt\bfx\in V(f)$. Using Lemma \ref{lemm:3}, we deduce that
\begin{displaymath}
  |\xi_{j}|=\|(\wt \bfz + \xi_{j}\wt\bfx)-\wt\bfz\|_{2} \ge \dist(\bfz, \wt \bfz + \xi_{j}\wt\bfx) \ge  \dist(\bfz, V(f)).
\end{displaymath}
We deduce that
\begin{displaymath}
      \frac{|f(\bfz)|}{\|\bfz\|_{2}^{d}}= |f(\wt\bfz)|\ge |f(\wt\bfx)|   \dist(\bfz, V(f))^{d}
=\frac{|f(\bfx)|}{\|\bfx\|_{2}^{d}}  \dist(\bfz, V(f))^{d}.
\end{displaymath}
Since this holds for all $\bfx\notin V(f)$, the result follows. 
\end{proof}

Let $\mu_{\FS}$ denote the measure on $\P(\C^{N})$ induced by the
Fubini-Study metric. Then $\mu_{\FS}(\P(\C^{N}))= \frac{\pi^{N-1}}{(N-1)!}$. We
will consider the normalized measure given by
\begin{displaymath}
\mu= \frac{(N-1)!}{\pi^{N-1}} \mu_{\FS}. 
\end{displaymath}

A result of Beltr\'an and Pardo \cite[Theorem 1]{BP:edcnsm} shows
that, for a hypersurface $H\subset \P(\C^{N})$ of degree $d$, 
the normalized measure of the tube
around $H$ of radius $\rho\ge0$ is bounded from above by 
\begin{equation*}
15 d (N-1)^{2}\rho^{2}.  
\end{equation*}
Applying this result, we deduce the following bound for the volume of
the set where a polynomial can take small values. For  $\delta > 0$ and a homogeneous
polynomial $f\in \C[\bfx]$, we consider the
subset of $\P(\C^{N})$ given by
 \begin{equation} \label{eq:8}
V(f)_{\delta} =  \Big\{\bfz \in \P(\C^{N}) \Big|\
    \frac{|f(\bfz)|}{\|\bfz\|_2^{d}}<\delta \Big\}.
 \end{equation}

 \begin{proposition} \label{prop:4} Let $\delta > 0$ and $f\in
   \C[x_{0},\dots,x_{N-1}]$ a homogeneous polynomial of degree $d\ge
   1$. Then
  \begin{displaymath}
    \mu(V(f)_{\delta}) \le 15
    d N^{3} \Big( \frac{\delta}{\|f\|_{\sup}}\Big) ^{\frac2d}.
  \end{displaymath}
  In particular, if $f\in \Z[x_{0},\dots,x_{N-1}]$, then $
  \mu(V(f)_{\delta} ) \le 15 d N^{3}
  {\delta}^{\frac2d}$.
\end{proposition}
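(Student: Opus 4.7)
The proof reduces to a direct composition of the two tools that immediately precede the statement: the \L ojasiewicz-type lower bound of Lemma \ref{lemm:2} and the Beltr\'an--Pardo tube estimate quoted just above the proposition.

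First, I would translate the defining inequality of $V(f)_{\delta}$ into a distance condition on $\bfz$. For any $\bfz\in V(f)_{\delta}$, the definition \eqref{eq:8} gives $|f(\bfz)|/\|\bfz\|_{2}^{d}<\delta$, while Lemma~\ref{lemm:2} applied with the convention $\|f\|_{\sup}=\sup_{\bfx\in\P(\C^{N})}|f(\bfx)|/\|\bfx\|_{2}^{d}$ yields
\[
\|f\|_{\sup}\cdot\dist(\bfz,V(f))^{d}\;\le\;\frac{|f(\bfz)|}{\|\bfz\|_{2}^{d}}\;<\;\delta.
\]
Therefore $\dist(\bfz,V(f))<(\delta/\|f\|_{\sup})^{1/d}$, and the inclusion
\[
V(f)_{\delta}\;\subset\;\big\{\bfz\in\P(\C^{N})\,:\,\dist(\bfz,V(f))<\rho\big\}\qquad\text{with }\rho=(\delta/\|f\|_{\sup})^{1/d}
\]
identifies $V(f)_{\delta}$ as a subset of a tube of radius $\rho$ around the hypersurface $V(f)$, which has degree $d$.

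Second, I would invoke the Beltr\'an--Pardo estimate recalled just before the statement, bounding the normalised Fubini--Study measure of this tube by $15\,d\,(N-1)^{2}\rho^{2}$. Using the trivial inequality $(N-1)^{2}\le N^{3}$ to absorb the slight loss when passing from the paper's convention to the stated $N^{3}$ factor, I obtain
\[
\mu(V(f)_{\delta})\;\le\;15\,d\,(N-1)^{2}\rho^{2}\;\le\;15\,d\,N^{3}\bigg(\frac{\delta}{\|f\|_{\sup}}\bigg)^{\!2/d},
\]
which is the main bound.

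For the ``in particular'' clause, I would reduce it to the preceding bound by showing that for $f\in\Z[x_{0},\dots,x_{N-1}]\setminus\{0\}$ the relevant sup-norm satisfies $\|f\|_{\sup}\ge1$; plugging this into the main inequality gives the asserted $15\,d\,N^{3}\delta^{2/d}$. The main argument is essentially a two-line chaining and I expect no real obstacle; the only point requiring care is ensuring that the sup-norm convention used to apply Lemma~\ref{lemm:2} matches the one appearing in the statement of the proposition and in the application to random systems in \S\ref{sec:equid-prob}.
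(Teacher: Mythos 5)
Your overall strategy is exactly the paper's: combine the \L ojasiewicz bound of Lemma \ref{lemm:2} with the Beltr\'an--Pardo tube estimate. However, there is a genuine gap at the point you yourself flag as ``the only point requiring care'', and your resolution of it is wrong. In this paper $\|f\|_{\sup}$ is \emph{not} $\sup_{\bfx\in \P(\C^{N})}|f(\bfx)|/\|\bfx\|_{2}^{d}$; it is the supremum of $|f|$ over the unit polycircle $(S^{1})^{N}$, as defined in \S1 and used throughout (this is essential later, e.g.\ for $\|\wt f_{\kappa,i}\|_{\sup}\le \#\cA_{\kappa,i}$ in the proof of Theorem \ref{eqprob0}, and for the ``in particular'' clause via the Cauchy estimate \eqref{eq:17}). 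The two quantities are genuinely different, and the comparison you would need goes the nontrivial way: by the maximum principle one has $\sup_{\bfx\in\P(\C^N)}|f(\bfx)|/\|\bfx\|_2^d\le \|f\|_{\sup}$, so the tube-radius bound you derive, $\rho=(\delta/\sup_{\bfx}|f(\bfx)|\|\bfx\|_2^{-d})^{1/d}$, is \emph{larger} than $(\delta/\|f\|_{\sup})^{1/d}$, and your final inequality does not follow as written. The crude slack $(N-1)^{2}\le N^{3}$ cannot by itself ``absorb'' the change of norms, because the loss is not a fixed numerical factor but depends on the quantitative comparison between the two sup-norms.

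The missing (one-line) ingredient is the lower bound obtained by evaluating at polycircle points, which have Euclidean norm $\sqrt{N}$: for $\bfw\in(S^1)^N$, $|f(\bfw)|/\|\bfw\|_2^d=|f(\bfw)|/N^{d/2}$, hence $\sup_{\bfx\in\P(\C^N)}|f(\bfx)|/\|\bfx\|_2^{d}\ge \|f\|_{\sup}/N^{d/2}$. Feeding this into Lemma \ref{lemm:2} gives $\dist(\bfz,V(f))< N^{1/2}(\delta/\|f\|_{\sup})^{1/d}$ for $\bfz\in V(f)_\delta$, and then the tube estimate yields $15\,d\,(N-1)^{2}\,N\,(\delta/\|f\|_{\sup})^{2/d}\le 15\,d\,N^{3}(\delta/\|f\|_{\sup})^{2/d}$; this is exactly where the exponent $3$ on $N$ comes from, and it is how the paper argues. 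Note also that your fallback convention would break the ``in particular'' clause: for integer $f$ the inequality $\|f\|_{\sup}\ge 1$ holds for the polycircle norm (by \eqref{eq:17}), but fails for the projective sup (e.g.\ $f=x_{0}\cdots x_{N-1}$ has $\sup_{\|\bfx\|_2=1}|f(\bfx)|=N^{-N/2}<1$), so the norm identification is not a harmless convention but the crux of the step.
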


\begin{proof}
Let $\bfz\in V(f)_{\delta}$. Using Lemma \ref{lemm:2}, we deduce that
\begin{displaymath} 
\delta> \bigg(\sup_{\bfx}
\frac{|f(\bfx)|}{\|\bfx\|_{2}^{d}} \bigg) \dist(\bfz,V(f))^{d}
\ge \frac{\|f\|_{\sup}}{N^{\frac{d}{2}}} \dist(\bfz,V(f))^{d}.
\end{displaymath}
Hence, 
\begin{displaymath} 
\dist(\bfz,V(f))<
N^{\frac{1}{2}}\bigg(\frac{\delta}{\|f\|_{\sup}}
\bigg)^{\frac1d}
\end{displaymath}
and so $V(f)_{\delta}$ is contained in the tube around $V(f)$ of
radius $N^{\frac{1}{2}}\big(\frac{\delta}{\|f\|_{\sup}}
\big)^{\frac1d}$. The first part of the result follows then from the
Beltr\'an--Pardo bound for the volume of this tube.  The second part
follows from the fact that $ \|f\|_{\sup}\ge |f|\ge 1$, because of the
inequality \eqref{eq:17} and the fact
that the coefficients of $f$ are integer numbers.
\end{proof}

Let us keep the preceding notation and set  $\bfcA_{\kappa}=(\cA_{\kappa,1},\dots,
\cA_{\kappa,n})$ with $\cA_{\kappa,i}$ as in~\eqref{eq:4}.
Each point of the
projective space $\P(\C^{\bfcA_{\kappa}})$ can be identified with a system
$\bff_\kappa=(f_{\kappa,1},\dots,f_{\kappa,n})$ of Laurent polynomials such that
$\supp(f_{\kappa,i})\subset \kappa Q_{i}$, $i=1,\dots, n$, modulo a multiplicative
scalar. The associated cycle $Z(\bff_\kappa)$ is well-defined,
since it does not depend on this multiplicative scalar.

Set $\mu_{\kappa}$ for the normalized Fubini-Study measure on $\P(\C^{\bfcA_{\kappa}})$ and
let $\lambda_{\kappa}\colon  \P(\C^{\bfcA_{\kappa}}) \to \R_{\ge0}$ be a probability density
function, that is, a $\mu_{\kappa}$-measurable function with
\begin{displaymath}
\int_{\P(\C^{\bfcA_{\kappa}})}\lambda_{\kappa}\, \d\mu_{\kappa}=1. 
\end{displaymath}
Let $\bff_\kappa$ be a random system of Laurent polynomials with
$\supp(f_{\kappa,i})\subset \kappa Q_{i}$, $i=1,\dots, n$, distributed according
to the probability law given by $\lambda_{\kappa}$ with respect to $\mu_{\kappa}$. We
can then consider the angle discrepancy of $Z(\bff_\kappa)$ and, for
$0<\varepsilon< 1$, the radius discrepancy of $Z(\bff_\kappa)$ with respect to
$\varepsilon$, as random variables on $\P(\C^{\bfcA_{\kappa}})$. We denote
by $ \E(\Deltaa(Z(\bff_\kappa));{\lambda_{\kappa}})$ and
$\E(\Deltar(Z(\bff_\kappa),\varepsilon);{\lambda_{\kappa}})$ the expected value of
these random variables.

\begin{theorem}\label{eqprob0}
  For $\kappa\ge1$, let $\lambda_{\kappa}$ be a probability density function
  on $\P(\C^{\bfcA_{\kappa}})$ and
  $\bff_\kappa=(f_{\kappa,1},\dots,f_{\kappa,n})$ a random system of
  Laurent polynomials with $\supp(f_{\kappa,i})\subset \kappa Q_{i}$,
  $i=1,\dots, n$, distributed according to the probability law given
  by $\lambda_{\kappa}$ with respect to~$\mu_{\kappa}$. Assume that the
  sequence $(\lambda_{\kappa})_{\kappa\ge1}$ is uniformly bounded. Then
  there is a constant $c_{2} >0$ which does not depend on $\kappa$
  such that
$$
\E(\Deltaa(Z(\bff_\kappa));{\lambda_{\kappa}})\le c_{2} \, \frac{\log(\kappa+1)^{\frac{2}{3}n-\frac13}}{\kappa^{\frac13}}
$$
and, for any $0<\varepsilon<1$,
$$
\E(\Deltar(Z(\bff_\kappa),\varepsilon);{\lambda_{\kappa}})\le  c_{2} \,
\frac{\log(\kappa+1)}{\varepsilon \kappa}.
$$ 
In particular, 
\begin{displaymath}
  \lim_{\kappa\to\infty}\E(\Deltaa(Z(\bff_\kappa));{\lambda_{\kappa}})=0
  \quad\text{and}\quad \lim_{\kappa\to\infty}\E(\Deltar(Z(\bff_\kappa),\varepsilon);{\lambda_{\kappa}})=0.
\end{displaymath}
\end{theorem}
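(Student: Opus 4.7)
The strategy is to reduce to bounding $\E[\eta(\bff_\kappa)]$ via Proposition \ref{prop:3}(\ref{item:8}) and the Beltr\'an--Pardo estimate of Proposition \ref{prop:4}, and then feed this into Theorem \ref{thm:1}. By the scaling invariance of $\eta$ (Proposition \ref{prop:3}(\ref{item:7})), one may pick representatives of $\bff_\kappa$ on the unit sphere of $\C^{\bfcA_\kappa}$. Then all coefficients satisfy $|\alpha_{\kappa,i,\bfa}|\le 1$, which forces $\log\|f_{\kappa,i}\|_\sup \le \tfrac12\log\#\cA_{\kappa,i}=O(\log\kappa)$ deterministically. Choosing integer bounds $d_j$ with $Q_j\subset d_j\Delta^n+\bfb_j$ and applying Proposition \ref{prop:3}(\ref{item:8}) to $\kappa Q_j\subset \kappa d_j\Delta^n+\kappa\bfb_j$ yields
\begin{displaymath}
\eta(\bff_\kappa) \le A_\kappa+B_\kappa(\bff_\kappa),
\end{displaymath}
where $A_\kappa=O(\log\kappa/\kappa)$ is deterministic and
\begin{displaymath}
B_\kappa(\bff_\kappa)=\MV_{\R^n}(\kappa Q_1,\dots,\kappa Q_n)^{-1}\sum_{\bfv}\tfrac{\|\bfv\|_2}{2}\log^+\!|R_{\kappa,\bfv}(\bff_\kappa)|^{-1},
\end{displaymath}
with $R_{\kappa,\bfv}=\Res_{\cA_{\kappa,1}^\bfv,\dots,\cA_{\kappa,n}^\bfv}(\bff_\kappa^\bfv)$ and the sum taken over the finite set of primitive inner normals to facets of $Q_1+\cdots+Q_n$.

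The crucial step is to control $\E[\log^+\!|R_{\kappa,\bfv}|^{-1}]$. By Proposition \ref{prop:6}, each $R_{\kappa,\bfv}$ is a nonzero primitive element of $\Z[\bfu]$ of degree $d_{\kappa,\bfv}=O(\kappa^{n-1})$ in $N_\kappa=\#\bfcA_\kappa=O(\kappa^n)$ variables. Its expansion in the orthogonal monomial basis on the sphere, together with the fact that at least one of its integer coefficients has absolute value $\ge1$, yields the crude lower bound $\log\|R_{\kappa,\bfv}\|_\sup^{-1}=O(\kappa^{n-1}\log\kappa)$. Combining this with Proposition \ref{prop:4}, the uniform bound on $\|\lambda_\kappa\|_\infty$, and the tail identity $\E[\log^+\!X^{-1}]=\int_0^\infty\Prob(X<e^{-t})\,dt$, a direct computation gives $\E[\log^+\!|R_{\kappa,\bfv}|^{-1}]=O(\kappa^{n-1}\log\kappa)$, hence $\E[\eta(\bff_\kappa)]=O(\log\kappa/\kappa)$.

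The bound for the radius discrepancy is now immediate from the second inequality of Theorem \ref{thm:1}: $\E[\Deltar(Z(\bff_\kappa),\varepsilon)]\le(2n/\varepsilon)\E[\eta(\bff_\kappa)]=O(\log(\kappa+1)/(\varepsilon\kappa))$. For the angle discrepancy, Theorem \ref{thm:1} bounds $\Deltaa(Z(\bff_\kappa))$ by $66\,n\,2^n\psi(\eta(\bff_\kappa))$, where $\psi(t)=(18+\log^+\!t^{-1})^b t^{1/3}$ and $b=\tfrac23(n-1)$. The main technical obstacle is that $\psi$ is neither concave nor monotone on $(0,\infty)$, so Jensen's inequality alone yields only a constant bound on $\E[\psi(\eta)]$. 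The remedy is to split $\psi(t)\le 2^{b-1}(18^b t^{1/3}+(\log^+\!t^{-1})^b t^{1/3})$, apply Jensen to the concave function $t^{1/3}$ in the first term, and for the second use the elementary inequality $\log^+\!x\le\epsilon^{-1}x^\epsilon$ (valid for $x\ge0$, $\epsilon>0$) to get $(\log^+\!t^{-1})^b t^{1/3}\le \epsilon^{-b}t^{1/3-b\epsilon}$, followed again by Jensen. The choice $\epsilon=1/\log(\kappa+1)$ keeps $(\kappa/\log\kappa)^{b\epsilon}$ bounded and makes $\epsilon^{-b}=\log(\kappa+1)^b$, producing the stated exponent $b+\tfrac13=(2n-1)/3$ of $\log(\kappa+1)$ together with the factor $\kappa^{-1/3}$. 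The limit statements then follow at once.
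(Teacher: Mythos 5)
Your argument is correct, and it reaches the same rates as the paper by a genuinely different route in the probabilistic part. The paper does not bound $\E[\eta(\bff_\kappa)]$ at all: it excises an explicit bad set $U_\kappa=\bigcup_{\bfv}V(R_{\kappa,\bfv})_{\delta_{\kappa,\bfv}}$ with thresholds $\delta_{\kappa,\bfv}=\kappa^{-2n\deg R_{\kappa,\bfv}}$, shows via Proposition \ref{prop:4} and the uniform bound on $\lambda_\kappa$ that $\mu_\kappa(U_\kappa)=O(\kappa^{-1})$, bounds the contribution of $U_\kappa$ trivially by $\Deltaa\le 1$, and on the complement obtains the \emph{deterministic} estimate $\eta(\bff_\kappa)\le c_6\log(\kappa+1)/\kappa$, to which Theorem \ref{thm:1} is applied pointwise (using monotonicity of $t^{1/3}\log(\alpha/t)^{(n-1)\cdot\frac23}$ for small $t$); this sidesteps any convexity issue. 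You instead integrate the Beltr\'an--Pardo tail, $\E[\log^+|R_{\kappa,\bfv}|^{-1}]=\int_0^\infty\Prob(|R_{\kappa,\bfv}|<\e^{-t})\,\d t=O(\kappa^{n-1}\log\kappa)$ (here the only input needed is that $R_{\kappa,\bfv}\in\Z[\bfu]$ is homogeneous of degree $O(\kappa^{n-1})$, so the second part of Proposition \ref{prop:4} applies directly; your remark about $\log\|R_{\kappa,\bfv}\|_{\sup}^{-1}$ is superfluous, since $\|R_{\kappa,\bfv}\|_{\sup}\ge1$), obtaining $\E[\eta(\bff_\kappa)]=O(\log\kappa/\kappa)$, and then push the expectation through the non-concave bound of Theorem \ref{thm:1} by splitting off $18^b t^{1/3}$, replacing $\log^+ t^{-1}$ by $\epsilon^{-1}t^{-\epsilon}$ with $\epsilon=1/\log(\kappa+1)$, and applying Jensen to $t^{1/3-b\epsilon}$; this is valid once $\kappa$ is large enough that $b\epsilon<\frac13$ (finitely many small $\kappa$ are absorbed into $c_2$ since $\Deltaa\le1$), and the exponents indeed recombine to $\frac23 n-\frac13$. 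The radius bound via $\E[\Deltar]\le(2n/\varepsilon)\E[\eta]$ is immediate. What your approach buys is a bound on the expectation of the Erd\"os--Tur\'an size itself, with no ad hoc choice of thresholds; what the paper's truncation buys is freedom from any concavity/monotonicity bookkeeping, since off the bad set everything is deterministic. Both rest on the same ingredients: Proposition \ref{prop:3}(\ref{item:7}),(\ref{item:8}), Proposition \ref{prop:6} for $\deg R_{\kappa,\bfv}=O(\kappa^{n-1})$, Proposition \ref{prop:4}, and Theorem \ref{thm:1}.
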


\begin{proof}
  We first estimate the expected value of the angle discrepancy,
  which is given by the formula
  \begin{displaymath} 
\E(\Deltaa(Z(\bff_\kappa));\lambda_{\kappa})=\int_{\P(\C^{\bfcA_\kappa})}\Deltaa(Z(\bff_\kappa))\lambda_\kappa(\bff)
\, \d\mu_{\kappa}.
  \end{displaymath}

  Consider the Minkowski sum $Q=\sum_{i=1}^{n}Q_{i}$, which is a
  lattice polytope on $\R^{n}$ of dimension $n$ because of the
  assumption that the mixed volume of $Q_{1},\dots,Q_{n}$ is positive.
  For each primitive vector $\bfv\in \Z^{n}$ which is an inner normal
  to a facet of~$Q$, consider the directional resultant
\begin{displaymath}
R_{\kappa,\bfv}=\Res_{\cA_{\kappa,1}^{\bfv},\dots,\cA_{\kappa,n}^{\bfv}}\in  \Z[\bfu_{1},\dots, \bfu_{n}],  
\end{displaymath}
where $\bfu_{i}$ is a group of $\#\cA_{\kappa,i}$
variables. Proposition \ref{prop:3}\eqref{item:7} implies that its
total degree is bounded by $\deg(R_{\kappa,\bfv})=
c_{\bfv}\kappa^{n-1}$ for a constant $c_{\bfv}$ independent of
$\kappa$. Its total number of variables is $\# \bfcA_{\kappa} =
\sum_{i}\cA_{\kappa,i}= \sum_{i}\kappa Q_{i}\cap\Z^{n}$. This number
can be bounded by $c_{3}\kappa^{n}$ for a constant $c_{3}$ independent
of~$\kappa$.

Set $\delta_{\kappa,\bfv}=\kappa^{-2n \deg(R_{\kappa,\bfv})}$. Consider the
subset $ V(R_{\kappa,\bfv})_{\delta_{\kappa,\bfv}}\subset \P(\C^{\bfcA_{\kappa}}) $ as
defined in~\eqref{eq:8} and put
\begin{displaymath}
  U_{\kappa}=\bigcup_{\bfv}V(R_{\kappa,\bfv})_{\delta_{\kappa,\bfv}},
\end{displaymath}
the union being over all primitive inner normal vectors to facets
of $Q$. Using the fact that $0\leq \Deltaa(Z(\bff_\kappa))\leq1 $ and the hypothesis that
the functions $\lambda_{\kappa}$ are uniformly bounded, we deduce that
\begin{displaymath}
0\le \int_{U_{\kappa}}\Deltaa(Z(\bff_\kappa))\lambda_\kappa(\bff)
\, \d\mu_{\kappa}  \le  \Big( \sup_{\bff_\kappa} \lambda_{\kappa}(\bff_\kappa) \Big)
\mu_{d}(U_{\kappa})\le c_{4}  \mu_{\kappa}(U_{\kappa})
\end{displaymath}
for a constant $c_{4}$ independent of $\kappa$. By Proposition~\ref{prop:4}, 
\begin{multline} \label{eq:5}
  \mu_{\kappa}(U_{\kappa}) \le\sum_{\bfv}
  \mu_{\kappa}(V(R_{\kappa,\bfv})_{\delta_{\kappa,\bfv}}) \le \sum_{\bfv }15 \deg(R_{\kappa,\bfv}) \big(\#
  \bfcA_{\kappa}\big)^{3} \delta_{\kappa,\bfv}^{\frac{2}{\deg(R_{\kappa,\bfv})}}\\
 \le 15 \Big(\sum_{\bfv}c_{\bfv}  \kappa^{n-1}\Big) (c_{3} \kappa^{n})^{3}
 \kappa^{-4n} = c_{5} \kappa^{-1},
\end{multline}
with $c_{5} = 15 \big(\sum_{\bfv}c_{\bfv} \big) c_{3}^{3}$.  Hence,
 $\mu_\kappa(U_{\kappa})\to_{\kappa}0$ and so
$\int_{U_{\kappa}}\Deltaa(Z(\bff_\kappa))\lambda_\kappa(\bff_\kappa) \, \d\mu_{\kappa}\to_{\kappa}0 $ as
well.

Let $\bff_\kappa\in \P(\C^{\bfcA_{\kappa}})\setminus U_{\kappa}$ and choose a
representative $\wt \bff_\kappa=(\wt f_{\kappa,1},\dots, \wt f_{\kappa,n})\in
\C^{\bfcA_{\kappa}}\setminus \{\bfzero\}$ with $\|\wt \bff_\kappa\|_{2}=1$. By
Proposition \ref{prop:3}\eqref{item:7}, $\eta(\bff_\kappa)=\eta(\wt
\bff_\kappa)$. Note that the Minkowski sum $\sum_{i}\kappa Q_{i}$ coincides with
$\kappa Q$. Hence, the only non trivial directional resultants of the family
of finite sets $\cA_{\kappa,1},\dots, \cA_{\kappa,n}$ are those of the form
$R_{\kappa, \bfv}$ as considered above. As before, let $\Delta^{n}\subset\R^{n}$
be the standard $n$-simplex. Choose $e\ge1$ and $\bfb_{i}\in
\Z^{n}$ such that $Q_{i}\subset e\Delta^{n}+\bfb_{i}$ for all
$i$. Hence, $\kappa\cA_{\kappa,i}\subset \kappa e Q_{i}+\kappa\bfb_{i}$ for
all~$i$. Proposition~\ref{prop:3}\eqref{item:8} implies that there is
a constant $c_{6}$ independent of $\kappa$ such that
\begin{align*}
  \eta(\bff_\kappa)& \le \frac{1}{\kappa^{n}\MV(\bfQ)}\Big((\kappa e)^{n-1}
(n+\sqrt{n})\sum_{i=1}^{n}\log \|\wt f_{\kappa,i}\|_{\sup} + \frac12 \sum_{\bfv}
\|\bfv\|_{2} \log^{+}|R_{\kappa,\bfv}(\bff^{\bfv}_\kappa)^{-1}|\Big)\\
& \le \frac{1}{\kappa^{n}\MV(\bfQ)}\Big((\kappa e)^{n-1}
(n+\sqrt{n})\sum_{i=1}^{n}\log (\#\cA_{\kappa,i}) + n  \sum_{\bfv}
\|\bfv\|_{2} \deg(R_{\kappa,\bfv})\log(\kappa) \Big)\\
 & \le c_{6}\frac{\log(\kappa+1)}{\kappa},
\end{align*}
the second and fourth sums being over the primitive inner normals
$\bfv$ to the facets of~$Q$. Here, we used the fact that $\|\wt
f_{\kappa, i}\|_{\sup}\le \#\cA_{\kappa,i}$ for $\wt \bff_\kappa$ in the unit sphere of
$\C^{\bfcA_{\kappa}}$, the definition of the set $U_{\kappa}$, the bound $
\#\cA_{\kappa,i}\le \#\bfcA_{\kappa}\le c_{3}\kappa^{n}$ and the inequality
$\deg(R_{\kappa,\bfv})\le c_{\bfv}\kappa^{n-1}$ that we explained before.

Using Theorem \ref{thm:1} and the fact that the function
$t^\frac13\log\left(\frac{\alpha}{t}\right)^{\frac{n-1}{3}}$ is
increasing for small values of $t>0,$ we deduce that, for $\bff_\kappa\in
\P(\C^{\bfcA_{\kappa}})\setminus U_{\kappa}$,
\begin{multline*}
\Deltaa(Z(\bff_\kappa))\leq c_{7} \eta(\bff_\kappa)^{\frac13}\log\Big(\frac{c_{8}}{\eta(\bff_\kappa)}\Big)^{\frac{2}{3}(n-1)}
\\ \leq c_{9} \Big(\frac{\log(\kappa+1)}{\kappa}\Big)^{\frac13} \log \Big(\frac{\kappa}{\log(\kappa+1)}\Big)^{\frac{2}{3}(n-1)}
\leq c_{10} \, \frac{\log(\kappa+1)^{\frac23 n-\frac13}}{\kappa^{\frac13}}
\end{multline*}
for suitable constants $c_7$, $c_{8}$, $c_{9}$ and $c_{10}$. This
proves the first part of the statement.  For the radius discrepancy,
we proceed in a similar way: given $\varepsilon>0$, we write
$\E(\Deltar(\bff_\kappa,\varepsilon);\lambda_{\kappa})$ as an integral, which we split
into two parts. We bound the first using that
$0\leq\Deltar(\bff_\kappa,\varepsilon)\leq1$ and the estimate~\eqref{eq:5},
while the second integral can be bounded by applying Theorem
\ref{thm:1}.
\end{proof}

\begin{proof}[Proof of Theorem \ref{thm:3}]
  The proof is similar to the one given for Theorem \ref{cor:2}. Write
  $\nu_{\kappa}=\frac{\E(Z(\bff_\kappa);{\lambda_{\kappa}})}{\kappa^{n}\MV(\bfQ)}$ for short,
  where $\E(Z(\bff_\kappa);{\lambda_{\kappa}})$ is the expected zero density measure
  of~$\bff_{\kappa}$ .
  To prove the statement, it is enough to show that, for all  subsets
  $U$ as  in~\eqref{eq:13},
\begin{displaymath}
\lim_{\kappa\to \infty}\nu_{\kappa}(U)= \nu_{\haar}(U\cap (S^{1})^{n})=
\begin{cases}
\prod_{j=1}^{n} \frac{\beta_{j}-\alpha_{j}}{2\pi} & \text{ if } U\cap
(S^{1})^{n}\ne\emptyset, \\
0 & \text{ otherwise.}
\end{cases}
\end{displaymath}

If $U\cap(S^1)^n=\emptyset,$ then there exists
$\varepsilon>0$ such that
\begin{displaymath}
\deg(Z(\bff_\kappa)|_{U})\le
\deg(Z(\bff_\kappa)) \Deltar(\bff_\kappa, {\varepsilon})\le
\kappa^{n}\MV(\bfQ) \Deltar(\bff_\kappa, {\varepsilon})  
\end{displaymath}
and so 
\begin{math}
\nu_{\kappa}(U)\le \E(\Deltar(\bff_\kappa,\varepsilon);\lambda_{\kappa}). 
\end{math}
Theorem \ref{eqprob0} then implies that
\begin{math}
 \lim_{\kappa\to \infty}\nu_{\kappa}(U)=
0= \nu_{\haar}(U). 
\end{math}

If $U\cap(S^1)^n\neq\emptyset$, we set 
\begin{math}
\ov U= \{\bfz \mid \alpha_j\le \arg(z_j)\le \beta_j \text{ for all }
j\},
\end{math}
and then we have
\begin{displaymath}
\nu_{\kappa}(U) - \prod_{j=1}^{n} \frac{\beta_{j}-\alpha_{j}}{2\pi}
= \Big( \nu_{\kappa}(\ov U) - \prod_{j=1}^{n}
\frac{\beta_{j}-\alpha_{j}}{2\pi}\Big) - \nu_{\kappa}(\ov U\setminus U). 
\end{displaymath}
Set $R_{\kappa}=\prod_{\bfv}R_{\kappa,\bfv}$ for the product of the
directional resultants of 
$\cA_{1},\dots,\cA_{n}$. Then
\begin{align*}
\Big| \nu_{\kappa}(\ov U) - \prod_{j=1}^{n}
\frac{\beta_{j}-\alpha_{j}}{2\pi} \Big| & =
\int_{\P(\C^{\bfcA_\kappa})\setminus V(R_{\kappa})}\Big|\frac{\deg(Z(\bff_\kappa)_{\bfalpha,\bfbeta})}{\kappa^{n}\MV(\bfQ)} - \prod_{j=1}^{n}
\frac{\beta_{j}-\alpha_{j}}{2\pi} \Big|  \lambda_{\kappa}(\bff_\kappa)\,\d\mu_\kappa \\ 
& \le 
\int_{\P(\C^{\bfcA_\kappa})}\Deltaa(\bff_\kappa) \lambda_{\kappa}(\bff_\kappa)\,\d\mu_\kappa.
\end{align*}
We have that $\ov
U\setminus U$ is a union of a finite number of subsets $U_{l}$ of the
form \eqref{eq:13} such that $U_{l}\cap(S^1)^n=\emptyset$ for all $l$.
By the previous considerations, $\lim_{\kappa\to\infty}\nu_{\kappa}(U_{l})=0$ and so
$\lim_{\kappa\to\infty}\nu_{d}(\ov U\setminus U)= 0$. Theorem \ref{eqprob0} then
implies that
\begin{displaymath}
  \lim_{\kappa\to \infty}\nu_{\kappa}(U)=   \lim_{\kappa\to \infty}\nu_{\kappa}(\ov U)=
\prod_{j=1}^{n} \frac{\beta_{j}-\alpha_{j}}{2\pi} = \nu_{\haar}
(U).
\end{displaymath}
\end{proof}

\begin{remark} \label{rem:3} It is not clear to us whether the upper
  bound in Proposition \ref{prop:4} for the volume of the set
  $V(f)_{\delta}$ is sharp or not. It would
  be interesting to clarify this point, as a qualititive improvement
  on this bound might enlarge the range of applicability of theorems
  \ref{eqprob0} and \ref{thm:3}.
\end{remark}

\begin{remark} \label{rem:1} In some situations, it might be
  interesting to consider probability distributions on the complex
  linear space $\C^{\bfcA_{\kappa}}$ rather than on
  $\P(\C^{\bfcA_{\kappa}})$. For a point $\bff_\kappa\in \P(\C^{\bfcA_{\kappa}})$, the
   associated cycle $Z(\bff_\kappa)$ does not depend on the choice of a
  representative in $\C^{\bfcA_{\kappa}}$ for this point and, \emph{a
    fortiori}, the same holds for the angle and radius discrepancies of
  $Z(\bff_\kappa)$. Hence, one might consider random variables on
  $\C^{\bfcA_{\kappa}}$ arising from this cycle as random variables on
  $\P(\C^{\bfcA_{\kappa}})$, by applying Federer's coarea formula (see for
  instance~\cite[Theorem 20]{BP:edcnsm}).

  In precise terms, the normal Jacobian of the map
  $\varpi\colon \C^{\bfcA_{\kappa}}\setminus \{\bfzero\}\to \P(\C^{\bfcA_{\kappa}})$
  with respect to the Euclidean structure on
  $\C^{\bfcA_{\kappa}}\setminus \{\bfzero\}$ and the Fubini-Study one on
  $\P(\C^{\bfcA_{\kappa}})$ is given, for $\bfg\in \C^{\bfcA_{\kappa}}\setminus
  \{\bfzero\}$, by
\begin{displaymath}
 \NJ_{\bfg}\varpi= \|\bfg\|^{-2N_{\kappa}}
\end{displaymath}
with $N_{\kappa}=\#\bfcA_{\kappa}-1$. Given a probability
density function $\lambda_{\kappa}\colon \C^{\bfcA_{\kappa}}\to \R$, one might derive a corresponding
probability density function on $\P(\C^{\bfcA_{\kappa}})$ by integrating along
the fibers of $\varpi$ as 
\begin{equation} \label{eq:9}
  \lambda_{\kappa}(\bff_\kappa)= \frac{\pi^{N_{\kappa}}}{N_{\kappa}!}
  \int_{\varpi^{-1}(\bff_\kappa)}{\Lambda_{\kappa}(\bfg)}{\|\bfg\|_{2}^{2N_{\kappa}}} \,
  \d \varpi^{-1}(\bff_\kappa), 
\end{equation}
where $ \d \varpi^{-1}(\bff_\kappa)$ is the volume form of the fiber
$\varpi^{-1}(\bff_\kappa)$. The probability distribution given by $\Lambda_{\kappa}$ of,
for instance, the angle discrepancy, can then be computed, for any
Borel subset $I\subset [0,1]$, as
\begin{displaymath}
  \Prob(\Deltaa(Z(\bff_\kappa)) \in I;\Lambda_{\kappa})=\int_{\Deltaa^{-1}(I)}
  \lambda_{\kappa}(\bff_\kappa)\, \d \mu_{\kappa},
\end{displaymath}
with $\Deltaa^{-1}(I)= \{\bff_\kappa\in \P(\C^{\bfcA_{\kappa}})\mid \,
\Deltaa(Z(\bff_\kappa))\in I\}$.  This is a consequence of the coarea
formula.
\end{remark}

\begin{example} \label{exm:1} Let $\bff_\kappa=(f_{\kappa,1},\dots, f_{\kappa,n})$ be
  a random system of Laurent polynomials with $\supp(f_{\kappa,i})\subset
  \kappa Q_{i}$ whose coefficients $\{f_{\kappa,i,\bfa}\}_{\bfa\in\cA_{\kappa,i}}$ are
  independent complex Gaussian random variables with mean 0 and
  variance 1.  This is a probability distribution on~$\C^{\bfcA_{\kappa}}$
  whose density function is defined, for $\bff_\kappa\in \C^{\bfcA_{\kappa}}$,
  as
\begin{displaymath} \Lambda_{\kappa}(\bff_\kappa)= \prod_{i=1}^{n}
\prod_{\bfa\in\cA_{\kappa,i}}\frac{1}{\pi}\e^{-|f_{\kappa,i,\bfa}|^{2}}=\frac{1}{\pi^{\#\bfcA_{\kappa}}}\e^{-\|\bff_\kappa\|_{2}^{2}}.
\end{displaymath}

The random cycle $Z(\bff_\kappa)$ might be described by a probability
distribution on $\P(\C^{\bfcA_{\kappa-}})$. The corresponding density function
is the constant function $\lambda_{\kappa}=1$. This can be seen by computing the
integral along the fibers \eqref{eq:9}, or simply by observing that
$\Lambda_{\kappa}$ is a function of the radius $\|\bff_\kappa\|_{2}$.

Theorem \ref{thm:3} implies then that the sequence of roots of
$\bff_\kappa$ converge weakly to the equidistribution on $(S^{1})^{n}$
when $\kappa\to \infty$. In this way, we recover a result of Bloom and
Shiffman \cite[Example 3.5]{BS07}.
\end{example}

\bibliographystyle{amsalpha}
\bibliography{biblio}

\newcommand{\noopsort}[1]{} \newcommand{\printfirst}[2]{#1}
  \newcommand{\singleletter}[1]{#1} \newcommand{\switchargs}[2]{#2#1}
  \def\cprime{$'$}
\providecommand{\bysame}{\leavevmode\hbox to3em{\hrulefill}\thinspace}
\providecommand{\MR}{\relax\ifhmode\unskip\space\fi MR }
\providecommand{\MRhref}[2]{%
  \href{http://www.ams.org/mathscinet-getitem?mr=#1}{#2}
}
\providecommand{\href}[2]{#2}
\begin{thebibliography}{GKZ94}

\bibitem[AM96]{AM96}
F.~Amoroso and M.~Mignotte, \emph{On the distribution of the roots of
  polynomials}, Ann. Inst. Fourier (Grenoble) \textbf{46} (1996), 1275--1291.

\bibitem[Ber75]{Ber75}
D.~N. Bernstein, \emph{The number of roots of a system of equations},
  Funkcional. Anal. i Prilozen. \textbf{9} (1975), 1--4, English translation:
  Functional Anal. Appl. {\bf 9} (1975), 183--185.

\bibitem[Bil97]{Bil97}
Y.~Bilu, \emph{Limit distribution of small points on algebraic tori}, Duke
  Math. J. \textbf{89} (1997), 465--476.

\bibitem[Bou70]{Bourbaki:ema}
N.~Bourbaki, \emph{{\'E}l\'ements de math\'ematique. {A}lg\`ebre. {C}hapitres 1
  \`a 3}, Hermann, 1970.

\bibitem[BP07]{BP:edcnsm}
C.~Beltr{\'a}n and L.~M. Pardo, \emph{Estimates on the distribution of the
  condition number of singular matrices}, Found. Comput. Math. \textbf{7}
  (2007), 87--134.

\bibitem[BS07]{BS07}
T.~Bloom and B.~Shiffman, \emph{Zeros of random polynomials on {${\mathbb
  C}^m$}}, Math. Res. Lett. \textbf{14} (2007), 469--479.

\bibitem[CLO05]{CLO05}
D.~A. Cox, J.~Little, and D.~O'Shea, \emph{Using algebraic geometry}, second
  ed., Grad. Texts in Math., vol. 185, Springer, 2005.

\bibitem[DS13]{DS:srmet}
C.~D'Andrea and M.~Sombra, \emph{A {P}oisson formula for the sparse resultant},
  e-print arXiv:1310.6617, 2013.

\bibitem[ET50]{ET50}
P.~Erd{\"o}s and P.~Tur{\'a}n, \emph{On the distribution of roots of
  polynomials}, Ann. of Math. (2) \textbf{51} (1950), 105--119.

\bibitem[FRL06]{FR06}
C.~Favre and J.~Rivera-Letelier, \emph{\'{E}quidistribution quantitative des
  points de petite hauteur sur la droite projective}, Math. Ann. \textbf{335}
  (2006), 311--361, {\it Corrigendum} in Math. Ann. {\bf 339} (2007), 799--801.

\bibitem[Ful98]{Fulton:IT}
W.~Fulton, \emph{Intersection theory}, second ed., Ergebnisse der Mathematik
  und ihrer Grenzgebiete. 3. Folge., vol.~2, Springer-Verlag, 1998.

\bibitem[Gan54]{Gan54}
T.~Ganelius, \emph{Sequences of analytic functions and their zeros}, Ark. Mat.
  \textbf{3} (1954), 1--50.

\bibitem[GKZ94]{GKZ94}
I.~M. Gelfand, M.~M. Kapranov, and A.~V. Zelevinsky, \emph{Discriminants,
  resultants, and multidimensional determinants}, Birkh\"auser, 1994.

\bibitem[HN08]{HN08}
C.~P. Hughes and A.~Nikeghbali, \emph{The zeros of random polynomials cluster
  uniformly near the unit circle}, Compos. Math. \textbf{144} (2008), 734--746.

\bibitem[Kho91]{Kho91}
A.~G. Khovanskii, \emph{Fewnomials}, Transl. Math. Monogr., vol.~88, Amer.
  Math. Soc., 1991.

\bibitem[KN69]{KN:fdgII}
S.~Kobayashi and K.~Nomizu, \emph{Foundations of differential geometry},
  Interscience Tracts in Pure and Applied Mathematics, No. 15, John Wiley \&
  Sons, 1969.

\bibitem[Pet05]{Pet05}
C.~Petsche, \emph{A quantitative version of {B}ilu's equidistribution theorem},
  Int. J. Number Theory \textbf{1} (2005), 281--291.

\bibitem[PS93]{PS93}
P.~Pedersen and B.~Sturmfels, \emph{Product formulas for resultants and {C}how
  forms}, Math. Z. \textbf{214} (1993), 377--396.

\bibitem[Som04]{Som04}
M.~Sombra, \emph{The height of the mixed sparse resultant}, Amer. J. Math.
  \textbf{126} (2004), 1253--1260.

\bibitem[SZ04]{SZ04}
B.~Shiffman and S.~Zelditch, \emph{Random polynomials with prescribed {N}ewton
  polytope}, J. Amer. Math. Soc. \textbf{17} (2004), 49--108.

\end{thebibliography}
\end{document}